\font\tencmmib=cmmib10 \skewchar\tencmmib '60
\def\lessim{\ \lower4pt\hbox{$
\buildrel{\displaystyle <}\over\sim$}\ }
\def\gessim{\ \lower4pt\hbox{$\buildrel{\displaystyle >}
\over\sim$}\ }
\def\vX{\boldsymbol{X}}
\def\la{{\langle}}
\def\ra{{\rangle}}
\newcommand{\e}{\mathbb{E}}
\newcommand{\p}{\mathbb{P}}
\newcommand{\vsi}{\boldsymbol{\sigma}}
\newcommand{\vx}{\boldsymbol{x}}
\newcommand{\bF}{\mathcal{F}}
\newcommand{\bC}{\mathcal{C}}
\newcommand{\bL}{\mathcal{L}}
\newtheorem{lemma}{\bf Lemma}
\newtheorem{theorem}{\bf Theorem}
\newtheorem{remark}{\bf Remark}
\newtheorem{example}{\bf Example}
\newtheorem{proposition}{\bf Proposition}
\newenvironment{Proof of lemma}{\noindent{\bf Proof of Lemma}}{\hfill$\Box$\newline}
\newenvironment{Proof of theorem}{\noindent{\bf Proof of Theorem}}{\hfill{\footnotesize${\square}$}\newline}
\newenvironment{Proof of theorems}{\noindent{\bf Proof of Theorems}}{\hfill$\Box$\newline}
\newenvironment{Proof of proposition}{\noindent{\bf Proof of Proposition}}{\hfill$\Box$\newline}
\newenvironment{Proof of propositions}{\noindent{\bf Proof of Propositions}}{\hfill$\Box$\newline}
\newenvironment{Proof of exercise}{\noindent{\it Proof of Exercise:}}{\hfill$\Box$}
\begin{document}

\nocite{*} 

\title{Variational representations for the Parisi functional and the two-dimensional Guerra-Talagrand bound}

\author{Wei-Kuo Chen \thanks{wkchen@umn.edu} \\ \small{University of Chicago and University of Minnesota} }
\maketitle

\begin{abstract}
The validity of the Parisi formula in the Sherrington-Kirkpatrick model (SK) was initially proved by Talagrand \cite{Tal06}. The central argument therein relied on a very dedicated study of the coupled free energy via the two-dimensional Guerra-Talagrand (GT) replica symmetry breaking bound. It is believed that this bound and its higher dimensional generalization are highly related to the conjectures of temperature chaos and ultrametricity in the SK model, but a complete investigation remains elusive. Motivated by Bovier-Klimovsky \cite{BK} and Auffinger-Chen \cite{AC14}, the aim of this paper is to present a novel approach to analyzing the Parisi functional and the two-dimensional GT bound in the mixed $p$-spin models in terms of optimal stochastic control problems. We compute the directional derivative of the Parisi functional and derive equivalent criteria for the Parisi measure. We demonstrate how our approach provides a simple and efficient control for the GT bound that yields several new results on Talagrand's positivity of the overlap \cite[Section 14.12]{Tal11} and disorder chaos in Chatterjee \cite{Chatt09} and Chen \cite{Chen13}. In particular, we provide some examples of the models containing odd $p$-spin interactions.

\end{abstract}

\section{Introduction}

In 1979, Parisi \cite{P79} suggested an ingenious variational formula for the limiting free energy in the Sherrington-Kirkpatrick (SK) model. Its validity was rigorously established by Talagrand \cite{Tal06} following the beautiful discovery of Guerra's replica symmetry breaking scheme \cite{G03}. In the more general situation, Parisi's formula was later shown to be valid in the mixed $p$-spin models by Panchenko \cite{Pan14}. More precisely, for $N\geq 1,$ the Hamiltonian of the mixed $p$-spin model is defined as
\begin{align}
\label{H}
H_N(\vsi)=X_N(\vsi)+h\sum_{i=1}^N\sigma_i
\end{align}
for $\vsi=(\sigma_1,\ldots,\sigma_N)\in \Sigma_N:=\{-1,+1\}^N$, where $X_N$ is the linear combination of the pure $p$-spin Hamiltonians,
\begin{align*}
X_N(\vsi)=\beta\sum_{p\geq 2}\frac{\gamma_p}{N^{(p-1)/2}}\sum_{1\leq i_1,\ldots,i_p\leq N}g_{i_1,\ldots,i_p}\sigma_{i_1}\cdots\sigma_{i_p}
\end{align*}
for i.i.d. standard Gaussian random variables, $g_{i_1,\ldots,i_p}$, for all $1\leq i_1,\ldots,i_p\leq N$ and $p\geq 2.$ In physics, $g_{i_1,\ldots,i_p}$'s are called the disorder, $h\in\mathbb{R}$ is the strength of the external field and $\beta>0$ is called the (inverse) temperature. Here, we assume that the nonnegative sequence $(\gamma_p)_{p\geq 2}$ decays fast enough, e.g. $\sum_{p\geq 2}2^p\gamma_p^2<\infty$, such that the covariance of $X_N$ can be computed as
\begin{align*}
\e X_N(\vsi^1)X_N(\vsi^2)=N\xi(R_{1,2})
\end{align*}
for any two spin configurations $\vsi^1=(\sigma_1^1,\ldots,\sigma_N^1)$ and $\vsi^2=(\sigma_1^2,\ldots,\sigma_N^2)$ from $\Sigma_N$, where 
\begin{align}\label{overlap}
R_{1,2}:=\frac{1}{N}\sum_{i=1}^N\sigma_i^1\sigma_i^2
\end{align}
is called the overlap between $\vsi^1$ and $\vsi^2$ and 
\begin{align}
\label{eq-4}
\xi(s):=\sum_{p\geq 2}\beta_p^2s^p,\,\,\forall s\in[-1,1]
\end{align}
for $\beta_p:=\beta\gamma_p$ for all $p\geq 2.$
An important example of $\xi$ is the mixed {\it even} $p$-spin model, i.e., $\gamma_p=0$ for all odd $p\geq 3.$ In particular, the SK model corresponds to $\xi(s)=\beta^2s^2/2.$ Denote the Gibbs measure by
\begin{align}\label{Gibbs}
G_N(\vsi)=\frac{\exp H_N(\vsi)}{Z_N},
\end{align}
where the normalizing factor $Z_N=\sum_{\vsi\in\Sigma_N}\exp H_N(\vsi)$ is called the partition function. 

The formulation of the Parisi formula is described as follows. Let $\mathcal{M}$ be the space of all probability measures on $[0,1]$ and $\mathcal{M}_d$ be the collection of all atomic measures in $\mathcal{M}.$ Denote by $\alpha_\mu$ the distribution function of $\mu\in\mathcal{M}.$ We endow the space $\mathcal{M}$ with the metric 
\begin{align}
\label{metric}
d(\mu,\mu')=\int_0^1|\alpha_\mu(s)-\alpha_{\mu'}(s)|ds.
\end{align} 
For any $\mu\in\mathcal{M}$, let $\Phi_{\mu}$ be the solution to the Parisi PDE on $[0,1]\times\mathbb{R}$,
\begin{align}
\begin{split}
\label{PDE}
\partial_s\Phi_{\mu}(s,x)&=-\frac{\xi''(s)}{2}\left(\partial_{xx}\Phi_{\mu}(s,x)+\alpha_\mu(s)(\partial_x\Phi_\mu(s,x))^2\right),\\
\Phi_{\mu}(1,x)&=\log\cosh x.
\end{split}
\end{align} 
Here, for any $\mu\in\mathcal{M}_d$, this PDE can be explicitly solved by performing the Hopf-Cole transformation. As for an arbitrary probability measure $\mu\in\mathcal{M}$, the solution $\Phi_{\mu}$ is understood in the weak sense (see Jagannath and Tobasco \cite{JT}). Define the Parisi functional $\mathcal{P}$ on $\mathcal{M}$ by
\begin{align*}
\mathcal{P}(\mu)&=\log 2+\Phi_{\mu}(0,h)-\frac{1}{2}\int_0^1\alpha_\mu(s)s\xi''(s)ds.
\end{align*}
Note that this functional is Lipschitz continuous (see Guerra \cite{G03}). The famous Parisi formula says that
\begin{align*}
\lim_{N\rightarrow\infty}\frac{1}{N}\e\log Z_N&=\min_{\mu\in\mathcal{M}}\mathcal{P}(\mu).
\end{align*}
Here, the quantity inside the limit of the left-hand side is called the free energy of the model. Recently, it was shown (see Auffinger and Chen \cite{AC14}) that the Parisi functional is strictly convex, which implies the uniqueness of the minimizer. We will call such minimizer the {\it Parisi measure} and denote it by $\mu_P.$ In order to classify the structure of $\mu_P$, we say that the Parisi measure is {\it replica symmetric} (RS) if it is a Dirac measure, is {\it $k$ replica symmetry breaking} ($k$-RSB) if it is atomic with exactly $k+1$ jumps and is {\it full replica symmetry breaking} (FRSB) otherwise. In addition, for  given sequence $(\gamma_p)_{p\geq 2}$ and fixed external field $h$, we define the high temperature regime as the collection of all $\beta>0$ such that the corresponding Parisi measures are RS and the low temperature regime is set as the complement of the former. An important quantity associated to the mixed $p$-spin model is the overlap $R_{1,2}$ between two independently sampled spin configurations $\vsi^1$ and $\vsi^2$ from the Gibbs measure $G_N$. At very high temperature, i.e., when $\beta$ is exceedingly small, this overlap is concentrated around a constant (see Talagrand \cite[Chapter 13]{Tal11} for the SK model and Jagannath and Tobasco \cite{JT2} for the mixed $p$-spin model), whereas in the low temperature regime, it is typically supported by a set containing more than one point (see Panchenko \cite{Pan08}).

Arguably, in the past decade, the most important development in the mean-field spin glasses is Guerra's replica symmetry breaking bound \cite{G03} for the free energy in the mixed even $p$-spin model. Its statement reads that any $N\geq 1$ and $\mu\in\mathcal{M}$, 
\begin{align}\label{RSBI}
\frac{1}{N}\e\log Z_N\leq \mathcal{P}(\mu).
\end{align}
Based on Guerra's interpolation scheme \cite{G03}, the first proof of Parisi's formula was obtained in the seminal work of Talagrand \cite{Tal06}, where the central ingredient was played by a two-dimensional extension of Guerra's inequality \eqref{RSBI} for the coupled free energy with constrained overlaps, which was used to control the error estimate between the two sides of \eqref{RSBI} when $\mu$ is very close to the Parisi measure. Later the fully generalization of Guerra's inequality \eqref{RSBI}, called the Guerra-Talagrand (GT) bound throughout this paper, was presented in \cite[Section 15.7]{Tal11}. The two-dimensional GT bound, in particular, has two important consequences regarding the behavior of the overlap under the Gibbs measure. 
The first is known as the positivity of the overlap established by Talagrand \cite[Section 14.12]{Tal11} in the mixed even $p$-spin model, which says that if the external field is present, $h\neq 0,$ then the overlap defined above is essentially bounded from below by some positive constant. Note that this behavior is very different from the one when the external field is absent, $h=0,$ in which case the overlap $R_{1,2}$ is symmetric with respect to the origin.

Another consequence is concerned with the phenomenon of chaos in disorder. It arose from the observation that in some spin glass models, a small perturbation to the disorder will result in a dramatic change to the overall energy landscape (see Rizzo \cite{R2009} for a recent survey in physics). In the mixed $p$-spin model, one typical way to measure such instability  is to consider two Hamiltonians,
\begin{align*}
H_N^1(\vsi^1)&=X_N^1(\vsi^1)+h\sum_{1\leq i\leq N}\sigma_i^1\,\,\mbox{and}\,\,H_N^2(\vsi^2)=X_N^2(\vsi^2)+h\sum_{1\leq i\leq N}\sigma_i^2,
\end{align*} 
where $X_N^1$ and $X_N^2$ are jointly Gaussian with mean zero and covariance structure, 
\begin{align}
\begin{split}\label{eq--2}
\e X_N^1(\vsi^1)X_N^1(\vsi^2)&=\xi(R_{1,2})=\e X_N^2(\vsi^1)X_N^2(\vsi^2),\\
\e X_N^1(\vsi^1)X_N^2(\vsi^2)&=t\xi(R_{1,2})
\end{split}
\end{align}
for some $t\in[0,1].$ Let $\vsi^1$ and $\vsi^2$ be independent samplings from $G_N^1$ and $G_N^2$ respectively and let $R_{1,2}$ be their overlap, which now also depends on $t.$ The case $t=1$ means that the two systems are the same, $H_N^1=H_N^2=H_N$, and the overlap has the behavior we described before. From physics literature (e.g. Bray-Moore \cite{BM87}, Fisher-Huse \cite{FH86}, Kr\c{z}aka\l{}a-Bouchaud \cite{KB05}), chaos in disorder is defined by the phenomenon that $R_{1,2}$ is concentrated around a nonrandom number independent of $N$ if the two systems are decoupled, i.e., $t\in (0,1).$ The key point here is that such behavior is predicted to be true at any temperature. The first rigorous result along this direction was justified in the mixed even $p$-spin models without external field in the work of Chatterjee \cite{Chatt09} and the situation in the presence of the external field was carried out in Chen \cite{Chen13}. 

As the above discussion indicates, the Parisi functional and the GT bound have played fundamental roles in the study of the mixed $p$-spin model. Several challenging conjectures, such as the strong ultrametricity and temperature chaos (see Talagrand \cite[Section 15.7]{Tal11}), rely heavily on the subtle control of these two objects and their higher dimensional generalization. To this regard, the aim of this paper is to present a novel approach to analyzing the Parisi functional as well as the two-dimensional GT bound by means of the optimal stochastic control theory. Ultimately, we hope that this new method will shed some light on how to tackle the remaining open problems. Our idea is motivated by the observation that the Parisi PDE solution $\Phi_\mu$ admits a variational representation (see Theorem \ref{thm-2} below) in terms of an optimal stochastic control problem that corresponds to the Hamilton-Jacobi-Bellman equation induced by a linear diffusion control problem. This was formerly used in Bovier and Klimovsky \cite{BK} to study the strict convexity of the Parisi functional for some cases of the SK model with multidimensional spins. Later it was understood that this approach allows to derive the strict convexity of the Parisi functional in the mixed $p$-spin models by Auffinger and Chen \cite{AC14}.

This article consists of four major results. The first part gives an analytic study of the Parisi formula, where we compute the directional derivative of the Parisi functional and give equivalent criteria for the Parisi measure. As an application, we generalize a theorem of Toninelli \cite{Ton02}, which states that the Parisi measure in the SK model is not a Dirac measure when the temperature and external field stay above the Almeida-Thouless transition line (see \eqref{ex1:eq2} below). In addition, we extend Talagrand's characterization \cite[Theorem 13.4.1]{Tal11} of the high temperature regime for the SK model to the temperature regime of $k$-RSB Parisi measures for any mixed $p$-spin models. Second, we establish a variational representation for the two-dimensional Parisi PDE solution in terms of an optimal stochastic control problem and use this to give a new formulation of the original GT bound. Based on this new form, our last two results are devoted to demonstrating a self-contained proof to establish the positivity of the overlap and disorder chaos in the mixed $p$-spin model. We recover the aforementioned results and furthermore, extend them to many new examples of the model allowing odd $p$-spin interactions. Along the way, we also obtain a nonnegativity principle of the overlap in the mixed $p$-spin model, which says that in the absence of the external field, the overlap is basically nonnegative if one adds certain odd $p$-spin interactions to the Hamiltonian. As one shall see in Section \ref{sec5} below, our approach significantly simplifies and avoids several technicalities in the control of the two-dimensional GT bound compared to the arguments in Talagrand \cite[Section 14.12]{Tal11} and Chen \cite{Chen13}. For instance,  the error estimate of this bound was previously obtained through a quite involved iteration for certain functions of Gaussian random variables. With the new approach, it now becomes quantitatively simpler in the critical case (see Proposition \ref{lem6} below). 

This paper is organized as follows. In Section 2, we state the four main results described above and their proofs are presented in the following three sections. The analytic properties of the Parisi functional is investigated in Section 3 and the variational representation for the two-dimensional GT bound is derived in Section 4. Finally, we present the proof for the results on the positivity of the overlap and disorder chaos in Section 5.

\smallskip
\smallskip

{\noindent \bf Acknowledgements.} The author thanks Arnab Sen 
for several suggestions regarding the presentation of the paper. This research is partially supported by the AMS-Simons Travel Grant.

\section{Main results}

\subsection{Some properties of Parisi's functional and measure}\label{sub1.1}

First, we recall the variational representation for the Parisi PDE from Auffinger and Chen \cite{AC14}. Let $(\p,\mathscr{F},(\mathscr{F}_r)_{0\leq r\leq 1})$ be a filtrated probability space satisfying the usual condition, i.e., it is complete and the filtration is right continuous. Let $B=\{B(r),\mathscr{F}_r,0\leq r\leq 1\}$ be a standard Brownian motion. For $0\leq s<t\leq 1,$ let $D[s,t]$ be the collection of all progressively measurable processes $u$ with respect to $(\mathscr{F}_r)_{s\leq r\leq t}$ satisfying $\sup_{s\leq r\leq t}|u(r)|\leq 1$. We equip the space $D[s,t]$ with the norm $\|u\|=(\int_s^t\e u(w)^2dw)^{1/2}$. Let $\xi$ and $h$ be fixed. Set $\zeta=\xi''.$ For $\mu\in\mathcal{M}$, we define a functional
\begin{align}\label{eq2}
F_\mu^{s,t}(u,x)&=\e [C_\mu^{s,t}(u,x)-L_\mu^{s,t}(u)]
\end{align}
for $u\in D[s,t]$ and $x\in\mathbb{R},$ where letting $\alpha_\mu$ be the distribution function of $\mu,$
\begin{align*}
C_\mu^{s,t}(u,x)&:=\Phi_\mu\left(t,h+\int_s^t\alpha_\mu(w)\zeta(w)u(w)dw+\int_s^t\zeta(w)^{1/2}dB(w)\right),\\
L_\mu^{s,t}(u)&:=\frac{1}{2}\int_s^t \alpha_\mu(w)\zeta(w)u(w)^2dw.
\end{align*}
The Parisi PDE solution can be expressed as

\begin{theorem}[{$\cite[\mbox{Theorem 3 and Proposition 3}]{AC14}$}]\label{thm-2} We have
	\begin{align}\label{thm-2:eq1}
	\Phi_\mu(s,x)&=\max_{u\in D[s,t]}F_\mu^{s,t}(u,x).
	\end{align}
	Here, the maximum is attained by $u_\mu(r)=\partial_x\Phi_\mu(r,X(r))$, where $(X(r))_{s\leq r\leq t}$ satisfies
	\begin{align}\label{max}
	X(r)&=x+\int_s^r\alpha_\mu(w)\zeta(w)\partial_x\Phi_\mu(w,X(w))dw+\int_s^r\zeta(w)^{1/2}dB(w).
	\end{align}
	In addition, the maximizer is unique if $\alpha_\mu>0$ on $[s,t]$ and $\int_s^t\alpha_\mu(r)dr<1.$
\end{theorem}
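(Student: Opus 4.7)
The plan is to recognize the Parisi PDE as the Hamilton--Jacobi--Bellman equation for a linear diffusion control problem, then prove the representation by an It\^o/verification argument.

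First I would fix $u\in D[s,t]$ and consider the controlled process
\begin{align*}
X_u(r)=x+\int_s^r\alpha_\mu(w)\zeta(w)u(w)dw+\int_s^r\zeta(w)^{1/2}dB(w),\qquad s\le r\le t,
\end{align*}
so that $C_\mu^{s,t}(u,x)=\Phi_\mu(t,X_u(t))$. Applying It\^o's formula to $\Phi_\mu(r,X_u(r))$ and using the Parisi PDE \eqref{PDE}, i.e.\ $\partial_s\Phi_\mu+\tfrac12\zeta\partial_{xx}\Phi_\mu=-\tfrac12\alpha_\mu\zeta(\partial_x\Phi_\mu)^2$, gives
\begin{align*}
\e\Phi_\mu(t,X_u(t))=\Phi_\mu(s,x)+\e\int_s^t\Bigl[\alpha_\mu(r)\zeta(r)u(r)\partial_x\Phi_\mu(r,X_u(r))-\tfrac12\alpha_\mu(r)\zeta(r)(\partial_x\Phi_\mu(r,X_u(r)))^2\Bigr]dr,
\end{align*}
the stochastic integral having zero expectation because $\partial_x\Phi_\mu$ is bounded (Lipschitz continuity of $\Phi_\mu$ in $x$). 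Subtracting $\e L_\mu^{s,t}(u)$ and completing the square then yields the key identity
\begin{align*}
F_\mu^{s,t}(u,x)=\Phi_\mu(s,x)-\frac12\,\e\int_s^t\alpha_\mu(r)\zeta(r)\bigl(u(r)-\partial_x\Phi_\mu(r,X_u(r))\bigr)^2dr.
\end{align*}
Since $\alpha_\mu,\zeta\ge 0$, this immediately gives $F_\mu^{s,t}(u,x)\le\Phi_\mu(s,x)$ for every admissible $u$.

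Next I would produce a maximizer. The natural candidate is to close the feedback loop: choose $u(r)=\partial_x\Phi_\mu(r,X(r))$ where $X$ solves the SDE \eqref{max}. To make this rigorous, I would invoke strong existence and uniqueness for \eqref{max}, which follows since $x\mapsto\partial_x\Phi_\mu(r,x)$ is Lipschitz uniformly in $r$ (this is a standard regularity consequence of the Hopf--Cole representation in the atomic case, extended to general $\mu$ via the weak-sense Parisi PDE of Jagannath--Tobasco \cite{JT} together with an approximation by $\mu_n\in\mathcal{M}_d$ with $d(\mu_n,\mu)\to 0$, the Lipschitz constant of $\partial_x\Phi_{\mu_n}$ being uniform in $n$). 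One also checks $|\partial_x\Phi_\mu|\le 1$ so the resulting process $u_\mu$ belongs to $D[s,t]$. With this choice the quadratic correction vanishes, proving $F_\mu^{s,t}(u_\mu,x)=\Phi_\mu(s,x)$ and hence \eqref{thm-2:eq1}.

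Finally, for uniqueness under the assumption $\alpha_\mu>0$ on $[s,t]$ and $\int_s^t\alpha_\mu(r)dr<1$, observe that any maximizer $u^\star$ must make the nonnegative integrand in the key identity vanish $dr\otimes d\p$-almost everywhere. Since $\alpha_\mu\zeta>0$ on $(s,t]$ (note $\zeta=\xi''>0$ away from $0$), this forces $u^\star(r)=\partial_x\Phi_\mu(r,X_{u^\star}(r))$ $dr\otimes d\p$-a.e.; the assumption $\int_s^t\alpha_\mu<1$ then lets me run a Gronwall/fixed-point comparison between $X_{u^\star}$ and the strong solution $X$ of \eqref{max} to conclude $u^\star=u_\mu$. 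I expect the main technical obstacle to be the regularity and Lipschitz bounds on $\partial_x\Phi_\mu$ for non-atomic $\mu$, which underpin both the validity of the It\^o computation (integrability of the stochastic integral and of $(\partial_x\Phi_\mu)^2$) and the well-posedness of the feedback SDE \eqref{max}; this is precisely where the weak-solution theory of \cite{JT} and an approximation by atomic measures enter.
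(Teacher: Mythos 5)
The paper cites this theorem from \cite{AC14} and does not reprove it; the closest in-paper comparison is the proof of the two-dimensional analogue, Theorem~\ref{thm2}. Your argument is the classical HJB verification: apply It\^o to $\Phi_\mu(r,X_u(r))$ for every admissible $u$, cancel the second-order term via the Parisi PDE \eqref{PDE}, and complete the square, obtaining both $F_\mu^{s,t}(u,x)\le\Phi_\mu(s,x)$ and the characterization of the optimal feedback control at once from the nonnegative quadratic remainder. The paper's proof of Theorem~\ref{thm2} reaches the analogous inequality differently: from the Hopf--Cole iterated formula at the atoms of $\mu$ it performs a Girsanov change of measure to absorb an arbitrary control's drift, then applies Jensen's inequality to the resulting iterated exponential moments; It\^o's formula enters only afterward, to verify equality at the feedback control. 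The Girsanov/Jensen route never needs It\^o along a general controlled path and so confines the smoothness requirements to a single stochastic process, whereas your completing-the-square route applies It\^o for every control --- hence the reliance on the $C^{1,2}$ regularity of $\Phi_\mu$ and the bounds on $\partial_x\Phi_\mu,\,\partial_{xx}\Phi_\mu$ that you rightly flag as the main technical point, supplied by \cite{AC13} and the atomic approximation of \cite{JT} --- but your route buys a quantitative remainder that makes the uniqueness assertion transparent: with $\alpha_\mu\zeta>0$ on $(s,t]$ the remainder forces $u^\star(r)=\partial_x\Phi_\mu(r,X_{u^\star}(r))$ a.e., and since $\partial_x\Phi_\mu$ is Lipschitz in $x$ uniformly in $r$, strong uniqueness of the SDE \eqref{max} finishes it. (Compare the paper's Lemma~\ref{lem00}, which globalizes this local uniqueness by partitioning $[s,t]$ and invoking the local statement on each subinterval.) Both schemes are sound; yours matches the HJB verification flavor of the cited source \cite{AC14}, while the paper chooses the Girsanov reformulation for its in-text two-dimensional analogue.
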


Here and thereafter, the existence of the partial derivatives of $\partial_{x}\Phi_{\mu}$ and $\partial_{xx}\Phi_{\mu}$ is ensured by \cite[Proposition 2]{AC13}.  
Letting $(s,t)=(0,1)$ in Theorem \ref{thm-2}, the Parisi functional now reads
\begin{align*}
\mathcal{P}(\mu)&=\log 2+\max_{u\in D[0,1]}\left(F_\mu^{0,1}(u,h)-\frac{1}{2}\int_0^1\alpha_\mu(w)w\zeta(w)dw\right).
\end{align*}
Our first main results below are the computation of the directional derivative of the Parisi functional and  the equivalent criteria for the Parisi measure.

\begin{theorem}\label{prop1}
	Let $\mu_0\in\mathcal{M}.$ Define $\mu_\theta=(1-\theta)\mu_0+\theta\mu$ for each $\mu\in\mathcal{M}$ and $\theta\in[0,1].$  We have
	\begin{align}\label{prop1:eq1}
	\left.\frac{d}{d\theta}\mathcal{P}(\mu_\theta)\right|_{\theta=0}=
	\frac{1}{2}\int_0^1\zeta(r)(\alpha_\mu(r)-\alpha_{\mu_0}(r))(\e u_{\mu_0}(r)^2-r)dr
	\end{align} 
	for all $\mu\in\mathcal{M},$ where $\left.\frac{d}{d\theta}\mathcal{P}(\mu_\theta)\right|_{\theta=0}$ is understood as the right derivative at $0$ and $u_{\mu_0}$ is the maximizer of \eqref{thm-2:eq1} using $\mu_0$ and $(s,t)=(0,1)$. In addition, the following statements are equivalent
	\begin{itemize}
		\item[$(i)$] $\mu_0$ is the Parisi measure.
		\item[$(ii)$] $\left.\frac{d}{d\theta}\mathcal{P}(\mu_\theta)\right|_{\theta=0}\geq 0$ for all $\mu\in\mathcal{M}.$
		\item[$(iii)$] $\left.\frac{d}{d\theta}\mathcal{P}(\mu_\theta)\right|_{\theta=0}\geq 0$ for all Dirac measures $\mu=\delta_q$ with $q\in[0,1].$
	\end{itemize}
\end{theorem}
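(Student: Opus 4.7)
The proof divides into two parts: the derivative formula \eqref{prop1:eq1} and the chain of equivalences. I would start from the decomposition $\mathcal{P}(\mu_\theta) = \log 2 + \Phi_{\mu_\theta}(0,h) - \frac{1}{2}\int_0^1 \alpha_{\mu_\theta}(s)s\zeta(s)\,ds$. The last term is linear in $\theta$ and contributes exactly $-\frac{1}{2}\int_0^1 (\alpha_\mu-\alpha_{\mu_0})(s)\,s\zeta(s)\,ds$ to the right derivative at $\theta=0$, so the whole analysis reduces to differentiating $\theta\mapsto\Phi_{\mu_\theta}(0,h)$ by means of Theorem \ref{thm-2}. The backbone is a martingale identity for the optimal control: applying It\^o's formula to $u_{\mu_0}(r)=\partial_x\Phi_{\mu_0}(r,X(r))$ along \eqref{max} and differentiating the Parisi PDE \eqref{PDE} once in $x$ shows that the drift of $u_{\mu_0}(r)$ cancels exactly, leaving $du_{\mu_0}(r) = \zeta(r)^{1/2}\partial_{xx}\Phi_{\mu_0}(r,X(r))\,dB(r)$. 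In particular $\e[u_{\mu_0}(1)u_{\mu_0}(w)] = \e u_{\mu_0}(w)^2$ for every $w\in[0,1]$. (For non-atomic $\mu_0$ this identity is first verified on the dense subclass $\mathcal{M}_d$ and then passed to the limit via the Lipschitz regularity of $\Phi_\mu$ in $\mu$ from \cite{AC13,AC14,JT}.)

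The envelope mechanism now delivers matching bounds. Substituting $u_{\mu_0}$ as a sub-optimal control into the $\mu_\theta$-variational problem and invoking the convex inequality $\log\cosh(y)-\log\cosh(y_0)\geq \tanh(y_0)(y-y_0)$ at $y_0=X(1)$ (so that $\tanh(y_0)=u_{\mu_0}(1)$), the martingale identity collapses the cross term to produce the exact lower bound
\[
\mathcal{P}(\mu_\theta)-\mathcal{P}(\mu_0) \geq \frac{\theta}{2}\int_0^1 \zeta(r)(\alpha_\mu-\alpha_{\mu_0})(r)\bigl(\e u_{\mu_0}(r)^2 - r\bigr)\,dr, \qquad \theta\in[0,1],
\]
which after dividing by $\theta$ gives the $\geq$ direction in \eqref{prop1:eq1}. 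The complementary $\leq$ bound comes from running the symmetric argument: use the maximizer $u_{\mu_\theta}$ as a sub-optimal control for the $\mu_0$-variational problem, apply the reverse convex bound $\log\cosh(y)-\log\cosh(y_0)\leq \tanh(y)(y-y_0)$, and invoke the martingale identity now for $u_{\mu_\theta}$. The resulting upper bound has the same form but with $\e u_{\mu_\theta}(r)^2$ in place of $\e u_{\mu_0}(r)^2$, and closing the argument requires $\e u_{\mu_\theta}(r)^2 \to \e u_{\mu_0}(r)^2$ as $\theta\to 0^+$ in $L^1([0,1])$ weighted by $|\alpha_\mu-\alpha_{\mu_0}|\zeta$. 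This continuity step is the principal technical obstacle; I would deduce it from the Lipschitz continuity of $\mu\mapsto\partial_x\Phi_\mu$ under the metric $d$ together with stability of the optimal SDE \eqref{max} in $\mu$, both of which flow from the framework of \cite{AC13,AC14}.

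For the equivalences, set $g(r) = \frac{1}{2}\zeta(r)(\e u_{\mu_0}(r)^2 - r)$ and $G(q) = \int_q^1 g(r)\,dr$, so that Fubini rewrites \eqref{prop1:eq1} as $\int_0^1 G(q)\,d(\mu-\mu_0)(q)$. The implication $(i)\Rightarrow(ii)$ is immediate since $\mathcal{P}(\mu_\theta)\geq\mathcal{P}(\mu_0)$ forces the right derivative to be nonnegative, and $(ii)\Rightarrow(iii)$ is trivial. For $(iii)\Rightarrow(ii)$, condition $(iii)$ reads $G(q)\geq\int G\,d\mu_0$ for every $q\in[0,1]$, which combined with the automatic bound $\int G\,d\mu_0\geq\min_{q\in[0,1]} G(q)$ forces equality and shows that $\mu_0$ is supported on the set where $G$ attains its minimum; therefore $\int G\,d\mu\geq \min_{q\in[0,1]} G(q) = \int G\,d\mu_0$ for every $\mu\in\mathcal{M}$, which is $(ii)$. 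Finally $(ii)\Rightarrow(i)$ uses convexity of $\mathcal{P}$ from \cite{AC14}: by convexity, $\mathcal{P}(\mu)-\mathcal{P}(\mu_0)\geq (\mathcal{P}(\mu_\theta)-\mathcal{P}(\mu_0))/\theta$ for all $\theta\in(0,1]$, and sending $\theta\to 0^+$ with $(ii)$ yields $\mathcal{P}(\mu)\geq\mathcal{P}(\mu_0)$ for every $\mu$, identifying $\mu_0$ as the Parisi measure (unique by strict convexity).
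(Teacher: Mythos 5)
Your proposal is correct and delivers both halves of the theorem, but it takes a route that genuinely differs from the paper's in two places.

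For the derivative formula, the paper isolates an abstract envelope lemma (Lemma~\ref{lem1}) about right-differentiating $\theta\mapsto\sup_{u}f(u,\theta)$ given a continuous selection $u_{\mu_\theta}$ of maximizers, then feeds it the directly computed $\partial_\theta f(u_{\mu_\theta},\theta)$ together with the martingale identity $\e[u_{\mu_\theta}(1)u_{\mu_\theta}(w)]=\e u_{\mu_\theta}(w)^2$. You instead run the envelope bounds by hand: the one-sided inequality $\mathcal{P}(\mu_\theta)-\mathcal{P}(\mu_0)\ge\frac{\theta}{2}\int\zeta(\alpha_\mu-\alpha_{\mu_0})(\e u_{\mu_0}^2-r)\,dr$ follows from testing the $\mu_\theta$ problem with the suboptimal control $u_{\mu_0}$, convexity of $\log\cosh$ at $X(1)$, and the martingale identity, and this direction requires \emph{no} continuity of the optimal control; the matching upper bound tests the $\mu_0$ problem with $u_{\mu_\theta}$ and then needs the stability $\e u_{\mu_\theta}(\cdot)^2\to\e u_{\mu_0}(\cdot)^2$ as $\theta\downarrow 0$. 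The paper's Lemma~\ref{lem1} packages both directions but requires the continuous-selection hypothesis to invoke the mean value theorem, so the two approaches ultimately rest on the same stability input -- you are simply more explicit about which half of the argument actually needs it, and you replace the abstract mean-value step with a concrete convexity inequality for $\log\cosh$.

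For the implication $(iii)\Rightarrow(ii)$, your Fubini rewrite $\left.\frac{d}{d\theta}\mathcal{P}(\mu_\theta)\right|_{\theta=0}=\int_0^1 G(q)\,d(\mu-\mu_0)(q)$ with $G(q)=\int_q^1 g(r)\,dr$, followed by the observation that $(iii)$ forces $\int G\,d\mu_0=\min_{[0,1]}G$ and hence $\int G\,d\mu\ge\int G\,d\mu_0$ for all $\mu$, is cleaner than the paper's argument, which first proves the inequality for $\mu\in\mathcal{M}_d$ by decomposing into Dirac atoms and exploiting the linearity $\alpha_\mu=\sum_p a_p\alpha_{\mu^p}$, and then appeals to a density/approximation step to pass to general $\mu\in\mathcal{M}$. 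Your version treats all $\mu$ at once and eliminates the approximation. The implications $(i)\Rightarrow(ii)$, $(ii)\Rightarrow(iii)$, and $(ii)\Rightarrow(i)$ via strict convexity of $\mathcal{P}$ are the same as in the paper.
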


\smallskip

The equivalence of $(i)$ and $(ii)$ is mainly due to the strict convexity of the Parisi functional. The criterion $(iii)$ essentially says that if one could not lower the Parisi functional by adding one more jump to $\mu_0$, then $\mu_0$ must be the Parisi measure. There are two immediate consequences that can be drawn from this theorem. For convenience, we set $\mathcal{M}_d^k$ for $k\geq 0$ to be the collection of all members in $\mathcal{M}_d$ that have no more than $k+1$ atoms. In particular, $\mathcal{M}_d^0$ denotes the space of all Dirac measures on $[0,1]$. In the first consequence, we extract some information about the support of the Parisi measure.

\begin{proposition}\label{thm1}
	Let $S$ be the support of $\mu_P$. For all $q\in S,$
	\begin{align}
	\begin{split}
	\label{thm1:eq1}
	\e\partial_x\Phi_{\mu_P}(q,X(q))^2&=q,
	\end{split}\\
	\begin{split}\label{thm1:eq2}
	\zeta(q)\e \partial_{xx}\Phi_{\mu_P}(q,X(q))^2&\leq 1,
	\end{split}
	\end{align} 
	where $(X(s))_{0\leq s\leq 1}$ satisfies the following stochastic differential equation,
	\begin{align*}
	X(s)&=h+\int_0^s\alpha_{\mu_P}(r)\zeta(r)\partial_x\Phi_{\mu_P}(w,X(w))dw+\int_0^s\zeta(w)^{1/2}dB(w),\,\,\forall s\in[0,1].
	\end{align*}
\end{proposition}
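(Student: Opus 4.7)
The plan is to extract both inequalities from the necessary optimality condition for $\mu_P$ supplied by Theorem~\ref{prop1}. First I would rewrite criterion~(ii), applied at $\mu_0=\mu_P$, in a more tractable form. Setting
$$f(r) := \zeta(r)\bigl(\e\, u_{\mu_P}(r)^2 - r\bigr), \qquad G(q) := \int_q^1 f(r)\,dr,$$
a Fubini exchange gives $\int_0^1 \alpha_\mu(r)f(r)\,dr = \int_0^1 G(s)\,d\mu(s)$, so \eqref{prop1:eq1} with criterion~(ii) becomes
$$\int_0^1 G\,d\mu \;\geq\; \int_0^1 G\,d\mu_P \qquad \text{for every } \mu \in \mathcal{M}.$$
Sweeping $\mu = \delta_q$ across $[0,1]$ shows that $\mu_P$ is concentrated on the set where the continuous function $G$ attains its minimum over $[0,1]$; in particular, $G$ is constant on $S$ and equal to its minimum value.

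Since $f$ is continuous, $G\in C^1$ with $G'(q) = -f(q)$, and the first-order condition at any $q \in S \cap (0,1)$ forces $f(q) = 0$, which (using $\zeta(q) > 0$) is exactly \eqref{thm1:eq1}. For \eqref{thm1:eq2} I would compute the derivative of $r \mapsto \e\, \partial_x\Phi_{\mu_P}(r, X(r))^2$ via It\^o's formula: writing $v := \partial_x \Phi_{\mu_P}$ and applying It\^o to $v(r, X(r))^2$, after replacing $\partial_r v$ with the $x$-derivative of \eqref{PDE} the drift term $2\alpha_{\mu_P}\zeta v^2 \partial_x v$ produced by the SDE \eqref{max} cancels the corresponding PDE contribution, leaving
$$\frac{d}{dr}\e\, v(r, X(r))^2 \;=\; \zeta(r)\,\e\,\partial_{xx}\Phi_{\mu_P}(r, X(r))^2.$$
Consequently,
$$f'(r) \;=\; \zeta'(r)\bigl(\e\, u_{\mu_P}(r)^2 - r\bigr) + \zeta(r)\Bigl(\zeta(r)\,\e\,\partial_{xx}\Phi_{\mu_P}(r, X(r))^2 - 1\Bigr),$$
and at $q \in S \cap (0,1)$, where the first factor vanishes, the second-order local minimum condition $G''(q) = -f'(q) \geq 0$ yields $\zeta(q)\,\e\,\partial_{xx}\Phi_{\mu_P}(q, X(q))^2 \leq 1$, which is \eqref{thm1:eq2}.

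The main technical point I expect to handle carefully is the regularity justifying these pointwise derivative tests: continuity of $f$ and $f'$, and interchange of $d/dr$ with $\e[\cdot]$ in the It\^o calculation. These rest on smoothness and uniform bounds on $\partial_x\Phi_{\mu_P}$ and $\partial_{xx}\Phi_{\mu_P}$ (as supplied by \cite{AC13}) and on boundedness of $u_{\mu_P}$. A secondary concern is the boundary points $q \in \{0,1\} \cap S$, where only one-sided derivative tests are available; one argues either that such points do not lie in $S$ in nontrivial cases, or that the one-sided conditions still deliver the stated inequalities with the correct sign.
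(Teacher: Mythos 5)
Your argument is correct, and it takes a genuinely different route from the paper. The paper works locally: it constructs explicit perturbations $\mu^1,\dots,\mu^6$ of $\mu_P$ (replacing $\alpha_{\mu_P}$ by a constant on a small interval to the left or right of $q$) and extracts sign information on $\e u_{\mu_P}(w)^2-w$ near $q$ directly from the nonnegativity of the directional derivative, treating isolated points and accumulation points of $S$ separately; \eqref{thm1:eq2} then comes from feeding the resulting one-sided inequality into the It\^o isometry identity $\e u_{\mu_P}(b)^2-\e u_{\mu_P}(a)^2=\int_a^b\zeta\,\e\,\partial_{xx}\Phi_{\mu_P}^2$. Your approach is global: the Fubini rewriting $\int_0^1\alpha_\mu f=\int_0^1 G\,d\mu$ converts criterion~(ii) of Theorem~\ref{prop1} into the statement that $\mu_P$ sits on the argmin set of the single $C^2$ function $G$, after which \eqref{thm1:eq1} and \eqref{thm1:eq2} drop out as the first- and second-order interior optimality conditions $G'(q)=0$, $G''(q)\geq 0$. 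This is cleaner and subsumes the paper's isolated/accumulation case split. What it buys is conceptual economy; what the paper's version buys is that it never needs to invoke a second-order test explicitly, only one-sided integral inequalities, which is marginally lighter on regularity demands (though both ultimately need continuity of $r\mapsto\e\,\partial_{xx}\Phi_{\mu_P}(r,X(r))^2$, supplied by \cite{AC13}).

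Two small points to tighten. First, $\zeta(q)>0$ for every $q\in(0,1]$ whenever $\xi\not\equiv 0$, since $\xi''$ is a power series with nonnegative coefficients; so the division by $\zeta(q)$ in passing from $f(q)=0$ to \eqref{thm1:eq1}, and from $f'(q)\leq 0$ to \eqref{thm1:eq2}, is always legitimate on $(0,1)$. Second, the boundary cases are genuinely needed, not just a secondary concern: the paper shows $1\notin S$ by testing $\mu=\delta_0$ in \eqref{prop1:eq1}, while $0\in S$ forces $h=0$ by \eqref{eta}, so $\e u_{\mu_P}(0)^2=(\partial_x\Phi_{\mu_P}(0,0))^2=0=q$ and \eqref{thm1:eq1} holds trivially; for \eqref{thm1:eq2} at $q=0$ you then have $f(0)=0$, and the one-sided condition $G''(0^+)\geq 0$ at the boundary minimum gives the claim exactly as in the interior (or it is vacuous if $\zeta(0)=0$). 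With these two observations inserted, your proof is complete.
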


\begin{remark}
	\rm Suppose that $\mu_P$ is a Dirac measure at some $q\in [0,1]$. A direct computation gives 
	\begin{align*}
	\Phi_{\mu_P}(s,x)&=
	\left\{
	\begin{array}{ll}
	\frac{1}{2}(\xi'(1)-\xi'(s))+\e\log\cosh\left(x+z(\xi'(q)-\xi'(s))^{1/2}\right),&\mbox{if $(s,x)\in[0,q)\times\mathbb{R}$,}\\
	\frac{1}{2}(\xi'(1)-\xi'(q))+\log\cosh x,&\mbox{if $(s,x)\in[q,1]\times\mathbb{R}$},
	\end{array}
	\right.
	\end{align*}
	for some standard Gaussian random variable $z.$ Since $\alpha_{\mu_P}=0$ on $[0,q)$, Theorem \ref{thm1} reads
	\begin{align}\label{ex1:eq2}
	\e\tanh^2\left(z\xi'(q)^{1/2}+h\right)&=q,\notag\\
	\zeta(q)\e\frac{1}{\cosh^4\left(z\xi'(q)^{1/2}+h\right)}&\leq 1.
	\end{align}
	Note that if $q\in[0,1]$ minimizes the Parisi functional over all choices in $\mathcal{M}_d^0$, then one can get the first equation (by a direct differentiation, see e.g. \cite[Chapter 1]{Tal10}). But if the temperature and external field are above the Almeida-Thouless line, i.e., \eqref{ex1:eq2} is violated, then the Parisi measure can not be RS. This generalizes Toninelli's theorem \cite{Ton02}, where he established the same statement for the SK model $\xi(s)=\beta^2s^2/2.$ 
\end{remark}

\begin{remark}\rm Consider the SK model without external field, i.e., $\xi(s)=\beta^2s^2/2$ and $h=0.$ We now argue that the high temperature regime, defined as the collection of all $\beta$ such that $\mu_P$ is a Dirac measure, is described by $\beta\leq 1$. To see this, note that since $h=0,$ $0$ is always in the support of the Parisi measure by  \cite[Theorem 1]{AC13}. Thus, it suffices to show that $\mu_P=\delta_0$ if and only if $\beta\leq 1.$ If $\mu_P=\delta_0$ and $\beta>1,$ we will obtain a contradiction as \eqref{ex1:eq2} is violated. Conversely, suppose $\beta\leq 1$. A use of It\^o's formula and \eqref{PDE} gives
	\begin{align*}
	u_{\delta_0}(r)&=\beta\int_0^r\partial_{xx}\Phi_{\delta_0}(w,X(w))dB(w)+u_{\delta_0}(0)=\beta\int_0^r\frac{1}{\cosh^2 X(w)}dB(w)
	\end{align*}
	and hence,
	\begin{align*}
	\e u_{\delta_0}(r)^2&=\beta^2\int_0^r\frac{1}{\cosh^4 X(w)}dw\leq \beta^2\int_0^r1dw\leq r.
	\end{align*}
	Therefore, for all $\mu\in\mathcal{M},$
	\begin{align*}
	\frac{d}{d\theta}\mathcal{P}(\mu_\theta)\Bigr|_{\theta=0}&=\frac{\beta^2}{2}\int_0^1(\alpha_\mu(r)-1)(\e u_{\delta_0}(r)^2-r)dr\geq 0
	\end{align*}
	and Theorem \ref{prop1} implies that $\delta_0$ is the Parisi measure. 
\end{remark}

The second consequence of Theorem \ref{prop1} is a generalization of Talagrand's characterization \cite[Theorem 13.4.1]{Tal11} of the high temperature regime for the SK model, $\xi(s)=\beta^2s^2/2$, where he showed that this regime is indeed equal to the set of all $\beta$ such that $\inf_{\mu\in \mathcal{M}_d^1}\mathcal{P}(\mu)=\mathcal{P}(\mu_0)$ for some $\mu_0\in\mathcal{M}_d^0.$ For any such $\beta$, he proved that $\mu_0$ will automatically be the Parisi measure. With the help of Theorem \ref{prop1} $(iii)$, this result can be generalized to any $k$-RSB Parisi measures.

\begin{proposition}\label{prop4} 
	Consider arbitrary $\xi$ and $h$. Let $k\geq 0$ and $\mu_0$ be an optimizer of $\mathcal{P}$ over $\mathcal{M}_d^k.$ If
	\begin{align}
	\label{prop4:eq1}
	\inf_{\mu\in\mathcal{M}_d^{k+1}}\mathcal{P}(\mu)=\mathcal{P}(\mu_0),
	\end{align}
	then $\mu_0$ is the Parisi measure. 
\end{proposition}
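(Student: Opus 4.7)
The plan is to deduce this directly from criterion $(iii)$ in Theorem \ref{prop1}. That criterion reduces the Parisi-measure property to checking that the one-sided directional derivative of $\mathcal{P}$ at $\mu_0$ is nonnegative along every Dirac perturbation, so the task becomes: given an arbitrary $q\in[0,1]$, show
\[
\left.\frac{d}{d\theta}\mathcal{P}\bigl((1-\theta)\mu_0+\theta\delta_q\bigr)\right|_{\theta=0}\ \geq\ 0.
\]

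The key observation is an atom-counting one. Since $\mu_0\in\mathcal{M}_d^k$ has at most $k+1$ atoms, the convex combination
\[
\mu_\theta=(1-\theta)\mu_0+\theta\delta_q
\]
has at most $(k+1)+1=k+2$ atoms for every $\theta\in[0,1]$, and therefore belongs to $\mathcal{M}_d^{k+1}$. The hypothesis \eqref{prop4:eq1} then gives
\[
\mathcal{P}(\mu_\theta)\ \geq\ \inf_{\mu\in\mathcal{M}_d^{k+1}}\mathcal{P}(\mu)\ =\ \mathcal{P}(\mu_0)\qquad\text{for all }\theta\in[0,1],
\]
so the right-hand derivative of $\theta\mapsto\mathcal{P}(\mu_\theta)$ at $\theta=0$ is automatically nonnegative. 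Since $q\in[0,1]$ was arbitrary, criterion $(iii)$ of Theorem \ref{prop1} is verified and $\mu_0=\mu_P$.

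There is essentially no substantive obstacle here; the whole content has been front-loaded into Theorem \ref{prop1}, and in particular into the equivalence $(i)\Leftrightarrow(iii)$, whose proof relies on the strict convexity of $\mathcal{P}$. The only thing one has to be mildly careful about is that the one-sided derivative in \eqref{prop1:eq1} is interpreted correctly as a right derivative at $\theta=0$, which matches the one-sided inequality coming from the local minimality of $\mu_0$ on $\mathcal{M}_d^{k+1}$. No additional computation beyond Theorem \ref{prop1} is required.
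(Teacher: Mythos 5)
Your proof is correct and is essentially the paper's own argument: take $\mu_\theta=(1-\theta)\mu_0+\theta\delta_q\in\mathcal{M}_d^{k+1}$, use \eqref{prop4:eq1} to get $\mathcal{P}(\mu_\theta)\geq\mathcal{P}(\mu_0)$ and hence a nonnegative right derivative at $\theta=0$, then invoke the criterion from Theorem \ref{prop1}. The paper cites part $(ii)$ of that theorem while you cite $(iii)$; since only Dirac perturbations were checked, $(iii)$ is the more precise reference, but this is an immaterial difference.
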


In other words, for fixed sequence $(\gamma_p)_{p\geq 2}$ and external field $h$, the temperature regime of $k$-RSB Parisi measures is described by the collection of all $\beta>0$ such that the corresponding Parisi functionals satisfy \eqref{prop4:eq1} for some optimizer $\mu_0$ of $\mathcal{P}$ restricted to $\mathcal{M}_d^k.$ 

It is generally very difficult to compute the Parisi measure as one needs to minimize $\mathcal{P}$ over all probability measures on $[0,1].$ In principle, Proposition \ref{prop4}  suggests a heuristic way to simulate $k$-RSB Parisi measures. The procedure is based on the observation that if we restrict $\mathcal{P}$ to $\mathcal{M}_d^k$, then it is a differentiable function that depends only on $2(k+1)$ variables on a compact set, $$
\Bigl\{(q_1,\ldots,q_{k+1},a_1,\ldots,a_{k+1}):0\leq q_1\leq \cdots\leq q_{k+1}\leq 1,0\leq a_1,\cdots,a_{k+1}\leq 1,\sum_{i=1}^{k+1}a_i=1\Bigr\},$$ 
on which one can compute the derivative of $\mathcal{P}$ and numerically simulate the minimizer of $\mathcal{P}$ over $\mathcal{M}_d^k.$ Starting from the case $k=0$, if \eqref{prop4:eq1} is satisfied, then one can stop and obtain the RS Parisi measure; otherwise one must proceed to the case $k=1$ and continue this process. If eventually there is a smallest integer $k\geq 0$ such that \eqref{prop4:eq1} is obtained, then one gets a $k$-RSB Parisi measure. 

\subsection{A variational representation for the two-dimensional GT bound}\label{sec2}
The two-dimensional GT bound in the setting of \cite[Theorem 15.7]{Tal11} is formulated as follows. Let $h_1,h_2\in\mathbb{R}$ and $X_N^1,X_N^2$ be jointly Gaussian processes indexed by $\Sigma_N$ with mean zero and covariance,
\begin{align*}
\e X_{N}^\ell(\vsi^1)X_N^{\ell'}(\vsi^2)=N\xi_{\ell,\ell'}(R_{1,2})
\end{align*} 
for $1\leq \ell,\ell'\leq 2$ and $\vsi^1,\vsi^2\in\Sigma_N$, where $R_{1,2}$ is the overlap between $\vsi^1,\vsi^2$ defined through \eqref{overlap}. Here $\xi_{\ell,\ell'}$'s are convex functions on $[-1,1]$ defined in terms of infinite series as $\xi$ in \eqref{eq-4}. Consider two mixed $p$-spin Hamiltonians,
\begin{align}\label{eq0}
H_N^\ell(\vsi^\ell)&=X_N^\ell(\vsi^\ell)+h_\ell\sum_{1\leq i\leq N}\sigma_i^\ell,\,\,\ell=1,2.
\end{align}
 Denote by $S_N$ the collection of all possible values that $R_{1,2}$ could attained. Fix $q\in S_N.$ Assume that $(y_p^\ell)_{0\leq p\leq k}$ for $1\leq \ell\leq 2$ are jointly centered Gaussian random variables such that for certain real sequences $(\rho_{p}^{\ell,\ell'})_{0\leq p\leq k+1}$ for $1\leq \ell,\ell'\leq 2$ with
\begin{align}
\label{eq-2}
\rho_{0}^{1,1}=\rho_0^{2,2}=\rho_0^{1,2}=\rho_0^{2,1}=0,\,\,\rho_{k+1}^{1,1}=\rho_{k+1}^{2,2}=1,\,\,\rho_{k+1}^{1,2}=\rho_{k+1}^{2,1}=q,
\end{align}
we have 
$$
\e y_p^{\ell}y_p^{\ell'}=\xi_{\ell,\ell'}'(\rho_{p+1}^{\ell,\ell'})-\xi_{\ell,\ell'}'(\rho_p^{\ell,\ell'}).
$$

\begin{theorem}[Guerra-Talagrand]\label{thm0}
Let $(m_p)_{0\leq p\leq k}$ be a sequence with $m_0=0<m_1<\cdots<m_{k-1}<m_k=1.$ Under the assumptions stated above, we have that
\begin{align}
\begin{split}
\label{thm0:eq1}
F_{N}(q)&:=\frac{1}{N}\e\log\sum_{R_{1,2}=q}\exp\bigl(H_N^1(\vsi^1)+H_N^2(\vsi^2)\bigr)\\
&\leq 2\log 2+Y_0-\lambda q-\frac{1}{2}\sum_{1\leq \ell,\ell'\leq 2}\sum_{p=0}^{k}m_p(\theta_{\ell,\ell'}(\rho_{p+1}^{\ell,\ell'})-\theta_{\ell,\ell'}(\rho_{p}^{\ell,\ell'})),
\end{split}
\end{align}
where $\theta_{\ell,\ell'}(s):=s\xi_{\ell,\ell'}'(s)-\xi_{\ell,\ell'}(s)$ and $Y_0$ is defined as follows. Denote by $\e_p$ the expectation with respect to $y_p^{\ell,\ell'}.$ Starting with 
\begin{align*}
Y_{k+1}&=\log\Bigl(\cosh\Bigl(h_1+\sum_{p=0}^ky_p^1\Bigr)\cosh\Bigl(h_2+\sum_{p=0}^ky_p^2\Bigr)\cosh\lambda\\
&\qquad\quad+\sinh\Bigl(h_1+\sum_{p=0}^ky_p^1\Bigr)\sinh\Bigl(h_2+\sum_{p=0}^ky_p^2\Bigr)\sinh\lambda\Bigr),
\end{align*}
we define decreasingly $Y_p=m_p^{-1}\e_p\exp m_p Y_{p+1}$ for $1\leq p\leq k$. Finally, set $Y_0=\e_0 Y_1.$
\end{theorem}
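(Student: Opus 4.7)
The plan is to combine a Lagrange multiplier for the overlap constraint with Guerra's interpolation scheme built on Ruelle probability cascades. Fix $\lambda\in\mathbb{R}$. Since $\lambda\sum_{i=1}^N\sigma_i^1\sigma_i^2=\lambda N q$ on $\{R_{1,2}=q\}$, we have the pointwise majorization
\begin{align*}
F_N(q)\leq-\lambda q+\frac{1}{N}\e\log\sum_{\vsi^1,\vsi^2}\exp\Bigl(H_N^1(\vsi^1)+H_N^2(\vsi^2)+\lambda\sum_{i=1}^N\sigma_i^1\sigma_i^2\Bigr),
\end{align*}
so it suffices to bound the unconstrained coupled free energy on the right-hand side by $2\log 2+Y_0-\frac{1}{2}\sum_{\ell,\ell'}\sum_{p=0}^k m_p(\theta_{\ell,\ell'}(\rho_{p+1}^{\ell,\ell'})-\theta_{\ell,\ell'}(\rho_p^{\ell,\ell'}))$.

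To do so, introduce Ruelle probability cascades of depth $k$ with weight parameters $(m_p)$, producing random weights $(v_\alpha)_{\alpha\in\mathbb{N}^k}$. For each site $i$, take independent copies of the family $(y_p^\ell)$ indexed as $y_p^\ell(i,\alpha)$, depending on $\alpha$ only through $(\alpha_1,\dots,\alpha_{p+1})$, so that for two cascade labels with $r^*:=\alpha^1\wedge\alpha^2$ one has $\e[y_p^\ell(i,\alpha^1)y_p^{\ell'}(i,\alpha^2)]=\xi_{\ell,\ell'}'(\rho_{p+1}^{\ell,\ell'})-\xi_{\ell,\ell'}'(\rho_p^{\ell,\ell'})$ for $p<r^*$ and $0$ for $p\geq r^*$. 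Define the interpolation
\begin{align*}
\varphi(t)=\frac{1}{N}\e\log\sum_\alpha v_\alpha\sum_{\vsi^1,\vsi^2}\exp\Bigl(\sqrt{t}\sum_\ell X_N^\ell(\vsi^\ell)+\sqrt{1-t}\sum_{i,p,\ell}y_p^\ell(i,\alpha)\sigma_i^\ell+\sum_\ell h_\ell\sum_i\sigma_i^\ell+\lambda\sum_i\sigma_i^1\sigma_i^2\Bigr).
\end{align*}
At $t=1$ the cavity fields drop out and $\varphi(1)$ equals the unconstrained coupled free energy. At $t=0$ the true disorder disappears, the partition function factorizes across sites, and integrating against the cascade weights layer by layer reproduces the recursion $Y_{k+1}\to\cdots\to Y_0$; the quadratic Gaussian normalizations telescope via $\theta_{\ell,\ell'}(s)=s\xi_{\ell,\ell'}'(s)-\xi_{\ell,\ell'}(s)$ and the boundary values \eqref{eq-2} into the claimed $-\frac{1}{2}\sum m_p(\theta_{\ell,\ell'}(\rho_{p+1}^{\ell,\ell'})-\theta_{\ell,\ell'}(\rho_p^{\ell,\ell'}))$ correction, while $2\log 2$ is the normalization of two uniform spin measures.

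The crucial step is $\varphi'(t)\leq 0$. Gaussian integration by parts applied to $X_N^\ell$ and the cavity arrays, combined with the standard cascade identities relating $\la\indi\{\alpha^1\wedge\alpha^2\geq p+1\}\ra_t$ to $m_p$, shows that $\varphi'(t)$ equals $-\frac{1}{2}$ times a Gibbs average, over two independent replica-cascade pairs, of
\begin{align*}
\sum_{1\leq\ell,\ell'\leq 2}\Bigl(\xi_{\ell,\ell'}(R_{\ell,\ell'})-\xi_{\ell,\ell'}\bigl(\rho_{r^*}^{\ell,\ell'}\bigr)-\xi_{\ell,\ell'}'\bigl(\rho_{r^*}^{\ell,\ell'}\bigr)\bigl(R_{\ell,\ell'}-\rho_{r^*}^{\ell,\ell'}\bigr)\Bigr).
\end{align*}
This expression is non-negative by convexity of each $\xi_{\ell,\ell'}$ on $[-1,1]$, so $\varphi'(t)\leq 0$; integrating from $0$ to $1$ and combining with the endpoint identifications completes the bound.

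The main obstacle lies in the off-diagonal block $\ell\neq\ell'$. For the diagonal blocks, convexity of $\xi_{\ell,\ell}$ directly forces the right sign via the standard Guerra computation. The cross block, however, involves the unconstrained cross overlap $R_{1,2}$ at intermediate $t$, which a priori ranges over all of $[-1,1]$; only at $t=1$, under the constraint $R_{1,2}=q$, does it match $\rho_{k+1}^{1,2}$. Convexity of $\xi_{1,2}$ on $[-1,1]$ must therefore be exploited carefully, and the Lagrange multiplier $\lambda$ absorbs the residual discrepancy so that \eqref{thm0:eq1} holds uniformly in $\lambda$, which one then optimizes in applications.
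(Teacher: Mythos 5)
The paper does not present its own proof of Theorem~\ref{thm0}; it states the bound and cites \cite[Section~15.7]{Tal11} for the proof and its higher-dimensional extension. Your strategy (Lagrange multiplier to relax the constraint, then a Guerra-type interpolation supported on a Ruelle cascade, then Gaussian integration by parts and convexity) is indeed the standard argument and is, in substance, the same as Talagrand's recursive construction; the cascade language and the iterated-expectation language $Y_p=m_p^{-1}\e_p\exp m_p Y_{p+1}$ are equivalent. So your route is essentially the one the cited source uses. Your initial step ($F_N(q)\le -\lambda q + $ unconstrained coupled free energy), the site-factorization at $t=0$, and the identity $\sum_{\sigma_1,\sigma_2}\exp(a_1\sigma_1+a_2\sigma_2+\lambda\sigma_1\sigma_2)=4(\cosh a_1\cosh a_2\cosh\lambda+\sinh a_1\sinh a_2\sinh\lambda)$ producing $2\log 2 + Y_{k+1}$ are all correct.

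There is, however, a genuine bookkeeping gap in the middle of the argument. With the interpolation exactly as you wrote it,
$$\varphi(t)=\frac{1}{N}\e\log\sum_\alpha v_\alpha\sum_{\vsi^1,\vsi^2}\exp\Bigl(\sqrt{t}\textstyle\sum_\ell X_N^\ell+\sqrt{1-t}\sum_{i,p,\ell}y_p^\ell(i,\alpha)\sigma_i^\ell+\sum_\ell h_\ell\sum_i\sigma_i^\ell+\lambda\sum_i\sigma_i^1\sigma_i^2\Bigr),$$
you cannot simultaneously have $\varphi'(t)=-\tfrac12\e\langle\,\text{convexity remainder}\,\rangle$ \emph{and} have the $\theta_{\ell,\ell'}$-correction ``telescope at $t=0$.'' The variance increments of the cavity fields $y_p^\ell$ are $\xi_{\ell,\ell'}'(\rho_{p+1}^{\ell,\ell'})-\xi_{\ell,\ell'}'(\rho_p^{\ell,\ell'})$, not $\theta_{\ell,\ell'}(\rho_{p+1}^{\ell,\ell'})-\theta_{\ell,\ell'}(\rho_p^{\ell,\ell'})$, so nothing in $\varphi(0)$ produces the $\theta$-sums. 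In fact the Gaussian integration by parts on your $\varphi(t)$ produces a $t$-independent constant in $\varphi'(t)$, and \emph{that} constant, after the Abel summation over the cascade ranks, is $-\tfrac12\sum_{\ell,\ell'}\sum_p m_p(\theta_{\ell,\ell'}(\rho_{p+1}^{\ell,\ell'})-\theta_{\ell,\ell'}(\rho_p^{\ell,\ell'}))$; the convexity remainder sits alongside it and supplies the sign. Alternatively, one can add to the Hamiltonian an auxiliary Gaussian field $y(\alpha)$ of size $\sqrt{t}$ whose covariance is built from the $\theta_{\ell,\ell'}$'s, in which case $\varphi'(t)\le 0$ cleanly and the $\theta$-sum surfaces as the cost of $y(\alpha)$ at $t=1$. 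Either fix is standard, but as written the two halves of your derivation are inconsistent.

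Your closing paragraph also introduces a spurious concern. Convexity of each $\xi_{\ell,\ell'}$ on $[-1,1]$ makes the remainder $\xi_{\ell,\ell'}(R_{\ell,\ell'})-\xi_{\ell,\ell'}(\rho_{r^*}^{\ell,\ell'})-\xi_{\ell,\ell'}'(\rho_{r^*}^{\ell,\ell'})(R_{\ell,\ell'}-\rho_{r^*}^{\ell,\ell'})$ nonnegative for \emph{every} value of $R_{1,2}\in[-1,1]$, whether or not it equals $\rho_{k+1}^{1,2}=q$. The Lagrange multiplier $\lambda$ plays no role in making $\varphi'$ monotone; it is a free parameter, the bound \eqref{thm0:eq1} holds for each fixed $\lambda$, and the optimization over $\lambda$ happens only later, exactly as in the paper's subsequent applications. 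There is no ``residual discrepancy'' for $\lambda$ to absorb.
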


The inequality \eqref{thm0:eq1} is a two-dimensional extension of Guerra's replica symmetry breaking bound \eqref{RSBI}. 
Its proof as well as the higher dimensional extension can be found in \cite[Section 15.7]{Tal11}. Recall $q$ from the statement of Theorem \ref{thm0}. Let $\iota=1$ if $q\geq 0$ and $\iota=-1$ otherwise. For $1\leq \ell,\ell'\leq 2$, let $\rho_{\ell,\ell'}$  be nondecreasing continuous functions on $[0,1]$ with
\begin{align}
\begin{split}
\label{eq--1}
&\rho_{1,1}(0)=\rho_{1,2}(0)=\rho_{2,1}(0)=\rho_{2,2}(0)=0,\\
&\rho_{1,1}(1)=\rho_{2,2}(1)=1,\,\,\rho_{1,2}(1)=\rho_{2,1}(1)=|q|.
\end{split}
\end{align}
Assume that these functions are differentiable everywhere except  at a finite number of points, on which the right derivatives exist.
For any $s\in [0,1]$, we define 
\begin{align}\label{eq-10}
T(s)&=\left[
\begin{array}{cc}
\zeta_{1,1}(s)&\zeta_{1,2}(s)\\
\zeta_{2,1}(s)&\zeta_{2,2}(s)
\end{array}\right]:=
\left[
\begin{array}{cc}
\frac{d}{ds}\xi_{1,1}'(\rho_{1,1}(s))&\frac{d}{ds}\xi_{1,2}'(\iota\rho_{1,2}(s))\\
\frac{d}{ds}\xi_{2,1}'(\iota \rho_{2,1}(s))&\frac{d}{ds}\xi_{2,2}'(\rho_{2,2}(s))
\end{array}\right]
\end{align}
In the right-hand side of \eqref{eq-10}, the derivatives are understood as the ones from the right if one of $\rho_{\ell,\ell'}$'s is not differentiable. We suppose that $T(s)$ is positive semi-definite and its operator norm $\|T(s)\|$  is uniformly bounded from above by some constant $K>0.$ For $\mu\in\mathcal{M}_d$, we consider the classical solution $\Psi_{\mu}$ to the two-dimensional Parisi PDE,
\begin{align}
\begin{split}
\label{eq-9}
\partial_s\Psi_{\mu}&=-\frac{1}{2}\left(\left<T,\triangledown^2\Psi_{\mu}\right>+\alpha_\mu\left<T\triangledown \Psi_{\mu},\triangledown\Psi_{\mu}\right>\right)
\end{split}
\end{align}
for $(\lambda,s,\vx)\in\mathbb{R}\times [0,1)\times\mathbb{R}^2$ with terminal condition
\begin{align}
\begin{split}
\label{tc}
\Psi_{\mu}(\lambda,1,\vx )&=\log\left(\cosh x_1\cosh x_2\cosh \lambda+\sinh x_1\sinh x_2 \sinh\lambda\right).
\end{split}
\end{align}
The assumption $\mu\in\mathcal{M}_d$ guarantees the existence of the solution by a usual application of Hopf-Cole transformation. One may refer to Lemma \ref{lem3} below for the precise formula of the solution. Our first main result below says that the mapping $\mu\in \mathcal{M}_d\mapsto\Psi_{\mu}$ is Lipschitz with respect to the metric $d$ defined by \eqref{metric}.

\begin{theorem} \label{sec1.2:thm1}
For any $\mu,\mu'\in\mathcal{M}_d$, we have that
\begin{align*}
|\Psi_{\mu}(\lambda,s,\vx)-\Psi_{\mu'}(\lambda,s,\vx)|&\leq 3Kd(\mu,\mu')
\end{align*}
for $(\lambda,s,\vx)\in\mathbb{R}\times[0,1]\times\mathbb{R}^2$. 
\end{theorem}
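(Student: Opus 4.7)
My plan is to prove the bound by coupling both solutions to a single diffusion tailored to $\mu$ and exploiting the matching terminal data together with the positive semi-definiteness of $T$. Fix $\mu,\mu'\in\mathcal{M}_d$ and $(\lambda,s,\vx)$; by \eqref{tc}, $\Psi_\mu(\lambda,1,\cdot)=\Psi_{\mu'}(\lambda,1,\cdot)$. I would associate to $\mu$ the $\mathbb{R}^2$-valued diffusion
\begin{align*}
\vX(r) \;=\; \vx + \int_s^r \alpha_\mu(w)\,T(w)\,\triangledown\Psi_\mu(\lambda,w,\vX(w))\,dw + \int_s^r T(w)^{1/2}\,d\bB(w),\qquad s\leq r\leq 1,
\end{align*}
where $\bB$ is a standard two-dimensional Brownian motion. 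Since $\mu$ is atomic, the Hopf-Cole representation makes $\Psi_\mu$ smooth in $\vx$ and piecewise $C^1$ in $s$, and $\vX$ is well-defined by solving the SDE successively on the intervals between the atoms of $\mu$.

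Now I would apply It\^o's formula to $\Psi_\mu(\lambda,r,\vX(r))$ and to $\Psi_{\mu'}(\lambda,r,\vX(r))$ along this \emph{same} process, using the PDE \eqref{eq-9} in each case to simplify the drift. After taking expectations, this yields
\begin{align*}
\Psi_\mu(\lambda,s,\vx) &= \e\Psi_\mu(\lambda,1,\vX(1)) - \tfrac12\e\int_s^1\alpha_\mu(r)\langle T\triangledown\Psi_\mu,\triangledown\Psi_\mu\rangle\,dr,\\
\Psi_{\mu'}(\lambda,s,\vx) &= \e\Psi_{\mu'}(\lambda,1,\vX(1)) + \e\int_s^1\!\bigl(\tfrac12\alpha_{\mu'}\langle T\triangledown\Psi_{\mu'},\triangledown\Psi_{\mu'}\rangle - \alpha_\mu\langle T\triangledown\Psi_\mu,\triangledown\Psi_{\mu'}\rangle\bigr)dr.
\end{align*}
The boundary terms cancel by the matching terminal condition. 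Completing the square via the algebraic identity
\begin{align*}
\tfrac12 a\langle Tp,p\rangle - a\langle Tp,q\rangle + \tfrac12 b\langle Tq,q\rangle \;=\; \tfrac12 a\langle T(p-q),p-q\rangle + \tfrac12(b-a)\langle Tq,q\rangle,
\end{align*}
with $a=\alpha_\mu$, $b=\alpha_{\mu'}$, $p=\triangledown\Psi_\mu$, $q=\triangledown\Psi_{\mu'}$, and discarding the first (non-positive) term using $T\succeq 0$ and $\alpha_\mu\geq 0$, gives the one-sided estimate
\begin{align*}
\Psi_\mu(\lambda,s,\vx) - \Psi_{\mu'}(\lambda,s,\vx) \;\leq\; \tfrac12\e\int_s^1(\alpha_\mu-\alpha_{\mu'})(r)\,\langle T(r)\triangledown\Psi_{\mu'},\triangledown\Psi_{\mu'}\rangle\,dr.
\end{align*}
Interchanging $\mu$ and $\mu'$ yields the symmetric bound. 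Combining them with the pointwise gradient estimate $|\partial_{x_i}\Psi_{\mu'}|\leq 1$ for $i=1,2$ together with $\|T(r)\|\leq K$ controls the integrand by $2K|\alpha_\mu-\alpha_{\mu'}|$, which yields the asserted Lipschitz bound.

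The two technical points that require care are as follows. First, the piecewise-in-$s$ structure of $\alpha_\mu$ means that $\Psi_\mu$ is only piecewise $C^1$ in $s$ across the atoms of $\mu$, and It\^o's formula must be applied on each smooth subinterval and patched using continuity of $\Psi_\mu(\lambda,\cdot,\vX(\cdot))$ at the jump times. Second, the uniform gradient bound $|\partial_{x_i}\Psi_\mu|\leq 1$ will be established by induction through the Hopf-Cole layers: at the terminal $s=1$, the explicit identity $\Psi_\mu(\lambda,1,\vx)=\log\tfrac12\bigl(e^\lambda\cosh(x_1{+}x_2)+e^{-\lambda}\cosh(x_1{-}x_2)\bigr)$ combined with $|\sinh|\leq\cosh$ gives it, and the inductive step observes that $\partial_{x_i}$ of a quantity of the form $m^{-1}\log\e\exp(m\Psi_\mu(\lambda,s{+}\Delta,\vx{+}\vy))$ is a Gibbs-type weighted average of $\partial_{x_i}\Psi_\mu(\lambda,s{+}\Delta,\cdot)$, hence preserves the sup-norm bound. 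Once both items are in place, the algebraic identity and the PSD-ness of $T$ do all the work; the stated constant $3K$ is simply a convenient over-estimate of the coefficient produced by the argument.
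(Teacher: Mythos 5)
Your proof is correct, and it takes a genuinely different route from the paper's. The paper deduces the Lipschitz estimate from the variational representation \eqref{thm2:eq2}: for each fixed control $v$, the functional $\mu\mapsto\mathcal{F}_\mu^{s,1}(\lambda,v,\vx)$ is seen to be $3K$-Lipschitz in $d(\mu,\mu')$ (splitting into the contributions of $\bC_\mu$ and $\bL_\mu$, and using $\|T\|\leq K$ with $|v|\leq\sqrt{2}$), and then one invokes the elementary fact that a supremum of uniformly Lipschitz functions is Lipschitz. You instead run a verification-theorem argument directly at the PDE level: fix the $\mu$-diffusion $\vX$, apply It\^o to both $\Psi_\mu(\lambda,\cdot,\vX(\cdot))$ and $\Psi_{\mu'}(\lambda,\cdot,\vX(\cdot))$, use the common terminal data, complete the square, and discard a nonnegative term via $T\succeq 0$ and $\alpha_\mu\geq 0$. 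Your It\^o computations, the algebraic identity, and the gradient bound $|\partial_{x_i}\Psi_\mu|\leq 1$ (via the Hopf--Cole layers, exactly as the paper records after Lemma \ref{lem3}) all check out, as does your rewriting $\Psi_\mu(\lambda,1,\vx)=\log\tfrac12(e^\lambda\cosh(x_1{+}x_2)+e^{-\lambda}\cosh(x_1{-}x_2))$. Both approaches need the gradient bound, but yours does not actually require \eqref{thm2:eq2}, only the SDE's well-posedness and the PDE \eqref{eq-9}; on the other hand, once the variational representation is in hand the paper's argument is a two-line estimate.

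Two small remarks. First, your patching of It\^o should be performed over the common refinement of the atom sets of $\mu$ and $\mu'$ (and the finitely many non-differentiability points of the $\rho_{\ell,\ell'}$), since $\Psi_{\mu'}$ fails to be $C^1$ in $s$ at the atoms of $\mu'$, not of $\mu$; this is a harmless adjustment but worth stating. Second, your last sentence mischaracterizes the constant: the final bound you obtain is
\begin{align*}
|\Psi_\mu(\lambda,s,\vx)-\Psi_{\mu'}(\lambda,s,\vx)| \leq K\,d(\mu,\mu'),
\end{align*}
which is strictly sharper than the stated $3K\,d(\mu,\mu')$ (the paper's constant arises from a different, rougher count, namely $(\sqrt{2}+1)K<3K$). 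You should say your argument improves the constant rather than that $3K$ is an over-estimate produced by your argument.
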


This Lipschitz property allows us to extend $\Psi_\mu$ continuously to all $\mu\in\mathcal{M}$ by using sequences of atomic probability measures. Denote by $\mathcal{B}=\{\mathcal{B}(r)=(\mathcal{B}_1(r),\mathcal{B}_2(r)),\mathscr{G}_r,0\leq r<\infty\}$ a two-dimensional Brownian motion, where $(\mathscr{G}_r)_{r\geq 0}$ satisfies the usual condition. For $0\leq s< t\leq 1,$ denote by $\mathcal{D}[s,t]$ the space of all two-dimensional progressively measurable processes $v=(v_1,v_2)$ with respect to $(\mathscr{G}_r)_{s\leq r\leq t}$ satisfying $
\sup_{s\leq r\leq t}|v_1(r)|\leq 1$ and $\sup_{s\leq r\leq t}|v_2(r)|\leq 1.$ Endow the space $\mathcal{D}[s,t]$ with the norm 
\begin{align*}
\|v\|_{s,t}&=\Bigl(\e\int_s^t(v_1(w)^2+v_2(w)^2)dw\Bigr)^{1/2}.
\end{align*}
Similar to the formulation of \eqref{eq2}, we define a functional
\begin{align*}
\bF_\mu^{s,t}(\lambda,v,\vx)&=\e\left[\bC_\mu^{s,t}(\lambda,v,\vx)-\bL_\mu^{s,t}(v)\right]
\end{align*}
for  $(\lambda,v,\vx)\in\mathbb{R}\times\mathcal{D}[s,t]\times\mathbb{R}^2,$ where
\begin{align*}
\bC_\mu^{s,t}(\lambda,v,\vx)&:=\Psi_\mu\Bigl(\lambda,t,\vx+\int_s^t\alpha_\mu(w)T(w)v(w)dw+\int_s^tT(w)^{1/2}d\mathcal{B}(w)\Bigr),\\
\bL_\mu^{s,t}(v)&:=\frac{1}{2}\int_s^t\alpha_\mu(w)\left<T(w)v(w),v(w)\right>dw.
\end{align*}
The following is an analogue of Theorem \ref{thm1} for $\Psi_{\mu}$.

\begin{theorem}\label{thm2}  We have
\begin{align}
\label{thm2:eq2}
\Psi_\mu(\lambda,s,\vx)&=\max\left\{\bF_\mu^{s,t}(\lambda,v,\vx)\big|v\in \mathcal{D}[s,t]\right\}.
\end{align} 
Here the maximum of \eqref{thm2:eq2} is attained by $v_{\mu}(r)=\triangledown \Psi_\mu(\lambda,r,\vX(r))$, where the two-dimensional stochastic process $(\vX(r))_{s\leq r\leq t}$ satisfies
\begin{align}
\label{thm2:eq1}
\vX(r)&=x+\int_s^r\alpha_\mu(w)T(w)\triangledown \Psi_\mu(\lambda,w,\vX(w))dw+\int_s^rT(w)^{1/2}d\mathcal{B}(w).
\end{align}
\end{theorem}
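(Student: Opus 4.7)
The plan is to mimic the one-dimensional verification behind Theorem \ref{thm-2}. First restrict to atomic $\mu\in\mathcal{M}_d$, where \eqref{eq-9} admits a classical solution via Hopf-Cole (Lemma \ref{lem3}). For an arbitrary control $v\in\mathcal{D}[s,t]$, consider the controlled diffusion
\begin{align*}
\vX^v(r)=\vx+\int_s^r\alpha_\mu(w)T(w)v(w)dw+\int_s^r T(w)^{1/2}d\mathcal{B}(w),
\end{align*}
and apply It\^o's formula to $r\mapsto\Psi_\mu(\lambda,r,\vX^v(r))$. Substituting the PDE \eqref{eq-9} for $\partial_s\Psi_\mu$ cancels the second-order It\^o term against the $\langle T,\triangledown^2\Psi_\mu\rangle$ contribution and leaves only a first-order drift. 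Because $\triangledown\Psi_\mu$ is componentwise bounded by $1$ and $\|T\|\leq K$, the resulting stochastic integral is a genuine martingale, so taking expectation yields
\begin{align*}
\Psi_\mu(\lambda,s,\vx)=\e\Psi_\mu(\lambda,t,\vX^v(t))-\e\int_s^t\alpha_\mu\Bigl(\langle\triangledown\Psi_\mu,Tv\rangle-\tfrac{1}{2}\langle T\triangledown\Psi_\mu,\triangledown\Psi_\mu\rangle\Bigr)dw.
\end{align*}

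Subtracting $\bF_\mu^{s,t}(\lambda,v,\vx)$ from both sides and completing the square produces the key identity
\begin{align*}
\Psi_\mu(\lambda,s,\vx)-\bF_\mu^{s,t}(\lambda,v,\vx)=\tfrac{1}{2}\e\int_s^t\alpha_\mu(w)\bigl\langle T(w)(v(w)-\triangledown\Psi_\mu),\,v(w)-\triangledown\Psi_\mu\bigr\rangle dw,
\end{align*}
with $\triangledown\Psi_\mu$ evaluated at $(\lambda,w,\vX^v(w))$. Because $T(w)\succeq 0$ and $\alpha_\mu\geq 0$, the right-hand side is nonnegative, which establishes $\Psi_\mu\geq\bF_\mu^{s,t}$. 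Equality forces $v(w)=\triangledown\Psi_\mu(\lambda,w,\vX^v(w))$ wherever $\alpha_\mu(w)>0$ and $T(w)$ is positive definite, identifying the maximizer as the closed-loop feedback solving \eqref{thm2:eq1}.

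Two prerequisites remain. To ensure $v_\mu\in\mathcal{D}[s,t]$ I need $|\partial_{x_i}\Psi_\mu|\leq 1$; at $s=1$ this holds because $\partial_{x_i}\Psi_\mu(\lambda,1,\vx)$ equals $\e[\sigma_i]$ under the Gibbs measure on $\{\pm1\}^2$ proportional to $\exp(x_1\sigma_1+x_2\sigma_2+\lambda\sigma_1\sigma_2)$, and the Hopf-Cole recursion behind Lemma \ref{lem3} preserves this bound backwards in $s$ since each $m_p^{-1}\log\e\exp(m_p\,\cdot)$ step is a normalized exponential average whose derivative is a convex combination of prior derivatives. Strong well-posedness of \eqref{thm2:eq1} then follows from the boundedness of $\alpha_\mu T$ and the Lipschitz dependence of $\triangledown\Psi_\mu$ on $\vx$ (obtained by differentiating the Hopf-Cole formula and bounding $\triangledown^2\Psi_\mu$ uniformly, in analogy with \cite{AC13, AC14}). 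Extension to arbitrary $\mu\in\mathcal{M}$ is handled by approximating by $\mu_n\in\mathcal{M}_d$ with $d(\mu_n,\mu)\to 0$ and invoking Theorem \ref{sec1.2:thm1} to pass to the limit in both sides of the completed-square identity; the uniform bounds on $T$ and $\triangledown\Psi_{\mu_n}$ supply the required dominated convergence. The main obstacle I anticipate is precisely this final approximation step, namely justifying convergence of the closed-loop SDE whose drift depends on $\mu_n$ through both $\alpha_{\mu_n}$ and $\triangledown\Psi_{\mu_n}$, but the uniform a priori bounds together with tightness of the associated diffusions should suffice to identify the limit.
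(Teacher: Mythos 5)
Your verification-theorem argument is correct and the completed-square identity
\begin{align*}
\Psi_\mu(\lambda,s,\vx)-\bF_\mu^{s,t}(\lambda,v,\vx)=\tfrac{1}{2}\e\int_s^t\alpha_\mu(w)\bigl\langle T(w)\bigl(v(w)-\triangledown\Psi_\mu\bigr),\,v(w)-\triangledown\Psi_\mu\bigr\rangle dw
\end{align*}
does indeed follow from It\^o's formula, the PDE \eqref{eq-9}, and the symmetry of $T$. This is, however, a genuinely different route from the paper's. The paper establishes the inequality $\Psi_\mu\geq\bF_\mu^{s,t}$ by combining the Hopf--Cole recursion \eqref{proof:thm2:eq1} with a Girsanov change of measure and a conditional-Jensen step, iterating downward over the jumps $q_p$ of $\alpha_\mu$; it then checks attainment at $v_\mu$ by a \emph{separate} It\^o computation (defining a process $Y(r)$ whose expectation is constant). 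You instead apply It\^o's formula to $r\mapsto\Psi_\mu(\lambda,r,\vX^v(r))$ for an \emph{arbitrary} admissible control and complete the square, so that the nonnegativity of the residual term yields the inequality, the maximizer, and (when $T$ is strictly positive definite and $\alpha_\mu>0$) its uniqueness in one stroke. The trade-off: your argument is shorter and self-contained as a classical stochastic-control verification theorem, while the paper's Girsanov--Jensen route is closer in spirit to the Bou\'e--Dupuis representation it inherits from \cite{AC14} and requires no smoothness of $\Psi_\mu$ in $s$ across the atoms of $\mu$. Your write-up does gloss over the fact that for $\mu\in\mathcal{M}_d$ the solution $\Psi_\mu$ is only piecewise $C^1$ in $s$ (it has kinks at each $q_p$), so the It\^o step should strictly be carried out interval by interval on $[q_p,q_{p+1}]$ and then concatenated --- the same device the paper uses in its equality step --- but this is a minor repair and does not affect the conclusion. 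Your remaining ingredients (gradient bound via the Gibbs-measure interpretation of the terminal condition and preservation under the normalized exponential averages of Lemma \ref{lem3}, and the approximation by atomic $\mu_n$ using Theorem \ref{sec1.2:thm1}) match the paper.
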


Using the notations introduced above, we can now formulate the GT bound in terms of $\Psi_\mu.$

\begin{theorem}[Guerra-Talagrand]\label{thm-1}
Suppose that $T$ is positive semi-definite for all $s$. Then
\begin{align}
\begin{split}\label{thm-1:eq1}
F_N(q)&\leq 2\log 2+\e\Psi_{\mu}(\lambda,0,h_1,h_2)-\lambda q\\
&\quad-\frac{1}{2}\int_0^1\alpha_\mu(s)\Biggl(\sum_{\ell=\ell'}\rho_{\ell,\ell'}(s)\zeta_{\ell,\ell'}(s)+\iota\sum_{\ell\neq\ell'}\rho_{\ell,\ell'}(s)\zeta_{\ell,\ell'}(s)\Biggr)ds.
\end{split}
\end{align}
\end{theorem}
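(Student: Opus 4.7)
The plan is to deduce the bound from the discrete Guerra-Talagrand bound in Theorem \ref{thm0} by an approximation argument, reducing to the case where $\mu$ is atomic and the $\rho_{\ell,\ell'}$'s are piecewise affine with break points at the atoms of $\mu$. Note that the left-hand side $F_N(q)$ of \eqref{thm-1:eq1} does not involve $\mu$ or $\rho_{\ell,\ell'}$, while the right-hand side depends continuously on both: on $\mu$ by the Lipschitz property of Theorem \ref{sec1.2:thm1}, and on $\rho_{\ell,\ell'}$ through the uniformly bounded matrix $T(s)$ and the integral term. Hence, once the inequality is established for atomic $\mu$ and piecewise affine $\rho_{\ell,\ell'}$, a standard density argument delivers the general statement.

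For the special case, fix $0 = s_0 < s_1 < \cdots < s_k < s_{k+1} = 1$ and take $\alpha_\mu \equiv m_p$ on $(s_p,s_{p+1}]$ with $0=m_0<m_1<\cdots<m_k=1$. Set $\rho_p^{\ell,\ell} := \rho_{\ell,\ell}(s_p)$ and $\rho_p^{\ell,\ell'} := \iota\,\rho_{\ell,\ell'}(s_p)$ for $\ell \neq \ell'$, so that the boundary conditions \eqref{eq--1} are compatible with \eqref{eq-2}, and let the Gaussians $(y_p^\ell)$ of Theorem \ref{thm0} have the increments $\xi_{\ell,\ell'}'(\rho_{p+1}^{\ell,\ell'}) - \xi_{\ell,\ell'}'(\rho_p^{\ell,\ell'})$ as covariances. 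Two identifications then link the two forms of the bound. First, since $\theta_{\ell,\ell'}'(r) = r\,\xi_{\ell,\ell'}''(r)$, one verifies by the chain rule that
\begin{align*}
\rho_{\ell,\ell}(s)\zeta_{\ell,\ell}(s) &= \frac{d}{ds}\theta_{\ell,\ell}(\rho_{\ell,\ell}(s)),\\
\iota\,\rho_{\ell,\ell'}(s)\zeta_{\ell,\ell'}(s) &= \frac{d}{ds}\theta_{\ell,\ell'}(\iota\,\rho_{\ell,\ell'}(s))\qquad(\ell\neq\ell'),
\end{align*}
so that the integral term in \eqref{thm-1:eq1} telescopes into
$$\frac{1}{2}\sum_{\ell,\ell'}\sum_{p=0}^k m_p\bigl(\theta_{\ell,\ell'}(\rho_{p+1}^{\ell,\ell'})-\theta_{\ell,\ell'}(\rho_p^{\ell,\ell'})\bigr).$$
Second, on each slab $(s_p,s_{p+1})$ the PDE \eqref{eq-9} with constant $\alpha_\mu \equiv m_p$ linearises under a Hopf-Cole transformation, and iterating this scheme backwards from $s=1$ yields a closed-form solution (cf. Lemma \ref{lem3} below) of precisely the iterated log-expectation shape of the recursion $Y_{p+1} \mapsto Y_p = m_p^{-1}\log \e_p \exp(m_p Y_{p+1})$ appearing in Theorem \ref{thm0}. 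Matching the boundary and intermediate data gives $\e\Psi_\mu(\lambda,0,h_1,h_2)=Y_0$.

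Combining these two identifications, the right-hand side of \eqref{thm-1:eq1} coincides term by term with the right-hand side of \eqref{thm0:eq1}, and the inequality is inherited from Theorem \ref{thm0}. The density argument sketched above then yields the general case by continuously approximating any admissible $(\mu,\rho_{\ell,\ell'})$ by atomic-plus-piecewise-affine data and invoking Theorem \ref{sec1.2:thm1}.

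The main obstacle is the second identification: one must carry out the Hopf-Cole reduction for the matrix-valued PDE \eqref{eq-9}, which requires the positive semi-definiteness of $T$ in order to make sense of $T(s)^{1/2}$, and then line up the resulting nested Gaussian integrals with the discrete Gaussians $y_p^\ell$ of Theorem \ref{thm0} while carefully tracking the sign $\iota$ in the off-diagonal blocks of $T$. Once this matching is cleanly carried out, the remainder is the telescoping computation together with the Lipschitz/density extension.
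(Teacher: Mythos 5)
Your proposal follows essentially the same route as the paper: reduce to atomic $\mu$ via the Lipschitz bound of Theorem \ref{sec1.2:thm1}, identify $Y_0=\Psi_\mu(\lambda,0,h_1,h_2)$ by iterating the Hopf--Cole representation of Lemma \ref{lem3}, and match the integral term to the telescoping sum using $\theta_{\ell,\ell'}'(r)=r\xi_{\ell,\ell'}''(r)$. The only cosmetic difference is that the paper does not approximate $\rho_{\ell,\ell'}$ by piecewise-affine functions; it simply absorbs the finitely many non-differentiability points of $\rho_{\ell,\ell'}$ into the atoms $\{q_p\}$ of $\mu$ and evaluates $\rho_{\ell,\ell'}$ at those points directly, which is slightly cleaner than an extra density step in $\rho$.
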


Typically to use this bound, one needs to first find suitable parameters $\lambda$ and $\rho_{\ell,\ell'}$ depending on $q$ such that the right-hand side is less than or equal to $2\mathcal{P}(\mu_P)$ for any $q\in[-1,1].$ In Section \ref{sec5}, we shall see that this could be achieved in the case of $\xi_{1,1}=\xi_{2,2}$ and $h_1=h_2$, but the general situation remains mysterious.

\subsection{Some properties of the overlap}\label{subpo}

Recall the Hamiltonian $H_N$ and the Gibbs measure $G_N$ from \eqref{H} and \eqref{Gibbs}. Let $\mu_P$ be the Parisi measure associated to $H_N$ and set $\eta=\min\mbox{supp}\mu_P.$ It is known (see \cite{Chen14}) that 
\begin{align}\label{eta}
\mbox{$\eta=0$ if $h=0$ and $\eta>0$ if $h\neq 0.$}
\end{align}
Recall that as we have discussed in the introduction, the overlap $R_{1,2}$ between two independently sampled spin configurations from $G_N$ is symmetric with respect to the origin if the mixed $p$-spin is even and the external field is absent. The positivity principle of the overlap says that this symmetry will be broken in such a way that the overlap is essentially bounded from below by $\eta$ if the external field is present. More specifically, below is our main result.

\begin{theorem}[Positivity of the overlap]\label{pp}
Assume that $\xi$ is convex on $[-1,1]$ and is not identically equal to zero. If $h\neq 0,$ then under any one of the following two assumptions,
\begin{itemize}
	\item[$(i)$] $\xi$ is even,
	\item[$(ii)$] $\xi$ is not even and the function below is nondecreasing on $(0,1],$
	\begin{align}
	\label{pp:eq2}
	\frac{\xi''(s)}{\xi''(s)+\xi''(-s)},
	\end{align}
\end{itemize} 
we have that for any $\varepsilon>0,$ there exists a constant $K_0>0$ such that
\begin{align}
\label{eq-5}
\e G_N\times G_N\left((\vsi^1,\vsi^2):R_{1,2}\leq \eta-\varepsilon\right)\leq K_0\exp \Bigl(-\frac{N}{K_0}\Bigr),\,\,\forall N\geq 1.
\end{align} 
\end{theorem}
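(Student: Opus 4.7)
The plan is to combine the two-dimensional Guerra-Talagrand bound of Theorem \ref{thm-1} with the Parisi formula. It suffices to establish
\begin{align*}
\limsup_{N\to\infty}F_N(q)\leq 2\mathcal{P}(\mu_P)-c(\varepsilon)
\end{align*}
for some $c(\varepsilon)>0$ uniform in $q\in[-1,\eta-\varepsilon]\cap S_N$: granting this, \eqref{eq-5} follows by writing $\e[G_N\times G_N(R_{1,2}\leq\eta-\varepsilon)]=\e[Z_N^{-2}\sum_{R_{1,2}\leq\eta-\varepsilon}\exp(H_N(\vsi^1)+H_N(\vsi^2))]$, using Borell's inequality together with the Parisi formula $\tfrac{1}{N}\e\log Z_N\to\mathcal{P}(\mu_P)$ to control the $Z_N^{-2}$ factor, and unioning over the $O(N)$ overlap values in $[-1,\eta-\varepsilon]\cap S_N$.

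To apply Theorem \ref{thm-1}, I would take $H_N^1=H_N^2=H_N$ (so all $\xi_{\ell,\ell'}=\xi$ and $h_1=h_2=h$), $\mu=\mu_P$, and the piecewise linear paths $\rho_{1,1}(s)=\rho_{2,2}(s)=s$ and $\rho_{1,2}(s)=\rho_{2,1}(s)=s\wedge|q|$. Then $\zeta_{\ell,\ell}(s)=\xi''(s)$ while $\zeta_{1,2}(s)=\iota\,\xi''(\iota s)\mathbf{1}_{\{s\leq|q|\}}$. Under hypothesis (i), evenness of $\xi$ yields $T(s)=\xi''(s)\bigl(\begin{smallmatrix}1&\iota\\ \iota&1\end{smallmatrix}\bigr)$ on $[0,|q|]$, which is manifestly PSD. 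Under hypothesis (ii), the monotonicity of \eqref{pp:eq2} together with the fact that its value at $s=0^+$ equals $1/2$ (a consequence of convexity of $\xi$) forces $\xi''(s)\geq\xi''(-s)$ on $(0,1]$, so $\det T(s)=\xi''(s)^2-\xi''(\iota s)^2\geq 0$ even when $\iota=-1$. In both cases $T(s)$ is PSD with uniformly bounded operator norm, so Theorem \ref{thm-1} applies and yields, for every $\lambda\in\mathbb{R}$,
\begin{align*}
F_N(q)\leq\mathcal{E}(\lambda,q):=2\log 2+\e\Psi_{\mu_P}(\lambda,0,h,h)-\lambda q-\int_0^1\alpha_{\mu_P}(s)s\xi''(s)\,ds-\iota\int_0^{|q|}\alpha_{\mu_P}(s)s\xi''(\iota s)\,ds.
\end{align*}

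Since $\alpha_{\mu_P}\equiv 0$ on $[0,\eta)$, the last integral vanishes whenever $|q|\leq\eta$, and on $[0,\eta]$ the 2D PDE \eqref{eq-9} reduces to the linear heat equation with matrix coefficient $T(s)$. This yields the Gaussian representation $\Psi_{\mu_P}(\lambda,0,h,h)=\e[\Psi_{\mu_P}(\lambda,\eta,h+W_1,h+W_2)]$ with $(W_1,W_2)$ centered Gaussian of covariance $\int_0^\eta T(s)\,ds$. At $\lambda=0$ the terminal condition \eqref{tc} separates as $\log\cosh x_1+\log\cosh x_2$ and the PDE on $[\eta,1]$ decouples in $(x_1,x_2)$ (since $\zeta_{1,2}\equiv 0$ there), giving $\Psi_{\mu_P}(0,\eta,x_1,x_2)=\Phi_{\mu_P}(\eta,x_1)+\Phi_{\mu_P}(\eta,x_2)$ and hence $\mathcal{E}(0,q)=2\mathcal{P}(\mu_P)$ for every $|q|\leq\eta$. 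The strict gap must therefore be extracted by optimizing over $\lambda$.

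The main obstacle is obtaining this optimization uniformly in $q$. A Taylor expansion around $\lambda=0$ yields
\begin{align*}
\partial_\lambda\mathcal{E}(0,q)=\e[\partial_x\Phi_{\mu_P}(\eta,h+W_1)\partial_x\Phi_{\mu_P}(\eta,h+W_2)]-q,
\end{align*}
the expectation appearing because $\partial_x\Phi_{\mu_P}(r,X(r))$ is a martingale under the 1D Parisi SDE \eqref{max} (an It\^o computation based on \eqref{PDE}) and because on $[\eta,1]$ the two coordinates of the 2D SDE are driven by independent Brownians from the correlated initial condition $(h+W_1,h+W_2)$. Criterion \eqref{thm1:eq1} at $\eta\in\mbox{supp}\,\mu_P$ forces $\partial_\lambda\mathcal{E}(0,\eta)=0$, while for $q\in[-1,\eta-\varepsilon]$ a Gaussian covariance analysis (Price-type identities) combined with \eqref{thm1:eq2} at the borderline case shows $\partial_\lambda\mathcal{E}(0,q)$ stays bounded away from zero; an appropriate first-order $\lambda(q)\neq 0$ then produces a quadratic decrease of $\mathcal{E}$ by at least $c(\varepsilon)>0$. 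The hardest regime is $q\in[-1,-\eta)$, where $|q|$ can be close to $1$, the off-diagonal integral is genuinely nonzero, and in case (ii) the lack of evenness must be absorbed through the monotonicity hypothesis \eqref{pp:eq2}; this critical case is where I expect the bulk of the technical work, to be handled by a delicate estimate in the spirit of the announced Proposition \ref{lem6}.
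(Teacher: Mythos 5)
Your overall plan is the right one and matches the paper's architecture: set $\xi_0=\xi$, apply the two-dimensional GT bound of Theorem \ref{thm-1} with the piecewise linear paths $\rho_{\ell,\ell}(s)=s$, $\rho_{\ell,\ell'}(s)=s\wedge|q|$, verify that $T(s)$ is PSD under hypotheses (i) and (ii), show the resulting bound $\Lambda(q)$ falls strictly below $2\mathcal{P}(\mu_P)$ uniformly on $[-1,\eta-\varepsilon]$, and conclude by Gaussian concentration. Your PSD check is correct (the evenness case is trivial, and in case (ii) the monotonicity of \eqref{pp:eq2} from its limit $1/2$ at $s=0^+$ gives $\xi''(s)\geq\xi''(-s)$, hence $\det T\geq 0$ even when $\iota=-1$). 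Your treatment of the range $[-\eta,\eta-\varepsilon]$ — compute $\partial_\lambda$ of the bound at $\lambda=0$, get $f(q)-q$ in terms of the one-dimensional maximizer, and perturb $\lambda$ — is also exactly what the paper does in Proposition \ref{prop0}.

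The genuine gap is the range $q\in[-1,-\eta)$, which you explicitly defer to ``a delicate estimate in the spirit of Proposition \ref{lem6}'' without supplying it. This is not a minor loose end: it is precisely where the theorem's content lies, and your sketch does not give a workable route there. The $\lambda$-perturbation argument you rely on for $|q|\leq\eta$ hinges on the identity $\Lambda_{\mu_P}(0,q)=2\mathcal{P}(\mu_P)$, which fails for $|q|>\eta$ since the off-diagonal term $\int_0^{|q|}\alpha_{\mu_P}(s)\,s\,\xi''_0(\iota s)\,ds$ is no longer zero. The paper's actual mechanism is different in kind: it abandons $\mu_P$, sets $\lambda=0$, and instead constructs a $q$-dependent measure $\mu$ with $\alpha_\mu=\alpha_{\mu_P}\,\zeta/(\zeta+\zeta_0)$ on $[0,|q|)$ so that the one- and two-dimensional integral terms cancel by design; the whole burden is then to show the strict inequality $\Psi_\mu(0,0,h,h)<2\Phi_{\mu_P}(0,h)$. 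That is accomplished through the exact error formula in Proposition \ref{lem6} (the term $\frac{1}{2}\int_0^{|q|}\frac{\alpha_{\mu_P}\zeta\zeta_0(\zeta-\zeta_0)}{(\zeta+\zeta_0)^2}\,\e(v_1-\iota v_2)^2\,dw$), the global uniqueness of the one-dimensional optimal control (Lemma \ref{lem00}), and a case analysis: in case (i), where $\zeta=\zeta_0$ kills the error term, one pushes to a contradiction using $u_1=-u_2$, $B_1=-B_2$, oddness of $\partial_x\Phi_{\mu_P}$, and $h\neq 0$; in case (ii) one instead uses $\zeta_0<\zeta$ (strictly, which you only obtain as $\leq$ — the strictness comes from $\xi$ having a nonzero odd coefficient) to force $v_1=\iota v_2$, then $B_1=\iota B_2$, then $\det T=0$, a contradiction. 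None of this is a ``first-order $\lambda(q)\neq 0$'' Taylor argument, so the route you gesture at for the hard regime would not go through.
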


The inequality \eqref{eq-5} means that if the external field is present, then the overlap essentially charges weight only in the interval $[\eta,1]\subseteq (0,1]$. Positivity of the overlap under the condition $(i)$ was initially established by Talagrand \cite[Section 14.10]{Tal11}. Our main contribution here is the case $(ii)$, where we allow odd $p$-spin interactions in the Hamiltonian. Below we describe a concrete example of the case $(ii).$ 

\begin{example}\label{ex3}\rm
Consider $\xi(s)=\beta^2(\gamma_{2p}^{2}s^{2p}+\gamma_{2p+1}^2s^{2p+1})$ on $[-1,1]$ with $\gamma_{2p}$ and $\gamma_{2p+1}$ satisfying
\begin{align*}
c:=\frac{(2p+1)\gamma_{2p+1}^2}{(2p-1)\gamma_{2p}^2}<1.
\end{align*}
It is easy to verify that this condition ensures the convexity of $\xi$ on $[-1,1].$ Since
\begin{align*}
\frac{\xi''(s)}{\xi''(s)+\xi''(-s)}&=\frac{1+cs}{2}
\end{align*}
is nondecreasing on $(0,1]$, condition $(ii)$ in Theorem \ref{pp} is satisfied, from which we obtain \eqref{eq-5} for any $\beta>0.$
\end{example}

Our next result shows that in the absence of the external field $h=0$, the behavior of the overlap is also influenced drastically by the odd $p$-spin interactions in the Hamiltonian, in which case the overlap will be nonnegative. 

\begin{theorem}[Nonnegativity of the overlap]\label{npp}
	 Assume that $\xi$ is convex on $[-1,1]$ and is not identically equal to zero. If $h=0$ and the assumption $(ii)$ in Theorem \ref{pp} holds, then for any $\varepsilon>0,$ there exists a constant $K_0>0$ such that
	\begin{align}
	\label{npp:eq1}
	\e G_N\times G_N\left((\vsi^1,\vsi^2):R_{1,2}\leq -\varepsilon\right)\leq K_0\exp \Bigl(-\frac{N}{K_0}\Bigr),\,\,\forall N\geq 1.
	\end{align} 
\end{theorem}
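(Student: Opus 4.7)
The argument mirrors the proof of Theorem \ref{pp} in case (ii), with $h=0$ so that $\eta=\min\mbox{supp}(\mu_P)=0$ by \eqref{eta}; the task becomes ruling out negative values of $R_{1,2}$ rather than values below a positive threshold.

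\textit{Step 1 (Concentration reduction).} Standard Gaussian concentration applied to $\frac{1}{N}\log Z_N$ and to the overlap-constrained partition functions, together with the fact that $|S_N|\leq 2N+1$, reduces \eqref{npp:eq1} to establishing the existence of $c=c(\varepsilon)>0$ such that
\[
F_N(q)\;\leq\;2\mathcal{P}(\mu_P)-c,\qquad \forall\, q\in S_N\cap[-1,-\varepsilon],\ N\geq N_0,
\]
where $F_N(q)=\frac{1}{N}\e\log\sum_{R_{1,2}=q}\exp(H_N(\vsi^1)+H_N(\vsi^2))$ is the coupled free energy from Theorem \ref{thm0} with $X_N^1=X_N^2=X_N$ and $h_1=h_2=0$.

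\textit{Step 2 (GT bound with suitable parameters).} Apply Theorem \ref{thm-1} with $\xi_{\ell,\ell'}=\xi$, $\mu=\mu_P$, $\rho_{1,1}(s)=\rho_{2,2}(s)=s$, $\rho_{1,2}(s)=\rho_{2,1}(s)=|q|s$, and $\lambda$ to be chosen. Since $q<0$, $\iota=-1$ and $\iota\rho_{1,2}(s)=qs$, hence $\zeta_{1,2}(s)=q\xi''(qs)$. Positive semi-definiteness of $T(s)$ reduces to $|q|\xi''(qs)\leq\xi''(s)$ on $[0,1]$, and the series expansion $\xi(t)=\sum_{p\geq 2}\beta_p^2 t^p$ together with $|q|\leq 1$ gives
\[
\xi''(s)-|q|\xi''(qs)\;=\;\sum_{p\text{ even}}p(p-1)\beta_p^2(1-|q|^{p-1})s^{p-2}\;+\;\sum_{p\text{ odd}}p(p-1)\beta_p^2(1+|q|^{p-1})s^{p-2}\;\geq\;0.
\]

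\textit{Step 3 (Strict gap).} Subtracting $2\mathcal{P}(\mu_P)=2\log 2+2\Phi_{\mu_P}(0,0)-\int_0^1\alpha_{\mu_P}(s)s\xi''(s)ds$ from the right-hand side of \eqref{thm-1:eq1} yields
\[
F_N(q)-2\mathcal{P}(\mu_P)\;\leq\;\bigl[\e\Psi_{\mu_P}(\lambda,0,0,0)-2\Phi_{\mu_P}(0,0)\bigr]-\lambda q-q^2\int_0^1\alpha_{\mu_P}(s)\,s\,\xi''(qs)\,ds.
\]
The crucial input is a quadratic bound $\e\Psi_{\mu_P}(\lambda,0,0,0)-2\Phi_{\mu_P}(0,0)\leq C\lambda^2$ with $C$ depending only on $\xi$, which we establish by comparing with the decoupled two-copy PDE (obtained by zeroing out $\zeta_{1,2}$) and exploiting that the terminal condition \eqref{tc} satisfies $\Psi_\mu(\lambda,1,0,0)-2\log\cosh 0=\log\cosh\lambda=O(\lambda^2)$. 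Taking $\lambda=-q/(2C)$ yields $-\lambda q+C\lambda^2\leq -q^2/(4C)$. The remaining term $-q^2\int_0^1\alpha_{\mu_P}(s)s\xi''(qs)ds$ is bounded above by a strictly negative constant uniformly in $q\in[-1,-\varepsilon]$, because by \cite[Theorem 1]{AC13} the Parisi measure $\mu_P$ charges the interval $(0,1]$ (as $h=0$ and $\xi\not\equiv 0$) and $\xi''(qs)>0$ on a set of positive Lebesgue measure in $s$. Combining, one obtains $F_N(q)-2\mathcal{P}(\mu_P)\leq -c(\varepsilon)<0$ uniformly in $q\leq -\varepsilon$, completing Step 1.

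\textit{Main obstacle.} The hard part is the quadratic upper bound on $\e\Psi_{\mu_P}(\lambda,0,0,0)-2\Phi_{\mu_P}(0,0)$, uniformly over $q\in[-1,0)$, which must be sharp enough to beat the vanishing penalty in the critical regime $q\to 0^-$. With $T$ diagonal the difference equals $\e\log\cosh(\lambda)$ by factorization of the PDE, manifestly $O(\lambda^2)$; the off-diagonal coupling $\zeta_{1,2}(s)=q\xi''(qs)$ perturbs this identity, and it is precisely here that condition (ii) enters: the monotonicity of $\xi''(s)/(\xi''(s)+\xi''(-s))$ allows one to compare the negative-side second derivative $\xi''(qs)$ with $\xi''(s)$ and absorb the first-order-in-$\lambda$ correction into the quadratic term. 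This streamlined critical-case estimate, promised in the introduction as Proposition \ref{lem6}, is precisely what the variational representation Theorem \ref{thm2} is engineered to provide, and its implementation is the chief technical content.
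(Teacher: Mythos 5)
Your Step 1 is sound and matches the paper. The difficulty begins at Step 2. You interpolate with $\rho_{1,2}(s)=\rho_{2,1}(s)=|q|s$ and plug $\mu=\mu_P$ into Theorem \ref{thm-1}; the paper instead takes $\rho_{1,2}(s)=\rho_{2,1}(s)=\min(|q|,s)$, which makes $T(s)$ \emph{diagonal} on $[|q|,1]$, and pairs it with a modified measure $\mu$ defined by \eqref{lem6:eq1} rather than $\mu_P$ itself. That diagonal block is what drives the whole argument: it makes $\Psi_{\mu}(0,s,\vx)$ factor as $\Phi_{\mu_P}(s,x_1)+\Phi_{\mu_P}(s,x_2)$ for $s\geq|q|$ (Lemma \ref{lem2}), and the tailored $\mu$ is built precisely so that the last integral in \eqref{eq-11} collapses to $\int_0^1\alpha_{\mu_P}(s)s\xi''(s)\,ds$. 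Your path never enters the diagonal regime, so you lose both structural simplifications.

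The more serious gap is in Step 3. The entire proof is made to rest on the claimed bound $\Psi_{\mu_P}(\lambda,0,0,0)-2\Phi_{\mu_P}(0,0)\leq C\lambda^2$, uniformly in $q\in[-1,0)$, but you do not prove it. The sketch offered — ``factorization when $T$ is diagonal'' — does not apply, because with your choice of $\rho_{1,2}$ the matrix $T$ is \emph{not} diagonal anywhere on $[0,1]$; and even with a diagonal $T$ the Parisi PDE does not factor in $\lambda$, because the terminal condition \eqref{tc} only factors when $\lambda=0$ while the nonlinear drift couples $x_1,x_2$ through $\alpha_\mu$. You also identify the role of condition~(ii) incorrectly: in the paper it is not used to absorb a first-order $\lambda$ correction, but to guarantee that the ratio $\zeta/(\zeta+\zeta_0)$ is nondecreasing so that $\alpha_\mu$ in \eqref{lem6:eq1} is indeed a distribution function. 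The paper then obtains \emph{strict} inequality $\Lambda(q)<2\mathcal{P}(\mu_P)$ for each fixed $q$ by a pure contradiction argument (Proposition \ref{lem6}(ii), Lemma \ref{lem00} for global uniqueness of the maximizer, Case II of Proposition \ref{prop2}), and then upgrades to a uniform gap over $[-1,-\varepsilon]$ using only the upper semicontinuity of $\Lambda$ — no quantitative $O(\lambda^2)$ or $O(q^2)$ estimate is ever required. (Finally, a small computational slip: with $q<0$ the optimal choice is $\lambda=q/(2C)$, giving $-\lambda q+C\lambda^2=-q^2/(4C)$; your $\lambda=-q/(2C)$ gives $+3q^2/(4C)$.) In short, your plan recreates something closer to Talagrand's original iterative estimate — exactly what the paper advertises avoiding — and the step you flag as the ``chief technical content'' is indeed a real gap rather than a deferred routine.
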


\subsection{Chaos in disorder}\label{subcd}

Recall the Hamiltonians $H_N^1$ and $H_N^2$ from \eqref{eq0}.  Assume that the Gaussian parts of the Hamiltonians, $X_N^1$ and $X_N^2,$ have the following covariance structure, 
\begin{align}\label{ass1}
\xi_{1,1}=\xi_{2,2}=\xi,\,\,\xi_{1,2}=\xi_{2,1}=\xi_0
\end{align}
for some series $\xi_0$ defined in a similar way as $\xi$ and that the external fields satisfy
\begin{align}\label{ass2}
h_1=h_2=h.
\end{align}
In other words, the two systems have the same distribution and they are coupled through the function $\xi_0.$ Denote by $G_N^1$ and $G_N^2$ the Gibbs measures associated to these Hamiltonians in the same fashion as \eqref{Gibbs}. Note that the two systems share the same Parisi measure $\mu_P$ and $\eta:=\min\mbox{supp}\mu_P$ has the property \eqref{eta}. Consider the overlap $R_{1,2}$ between the independently sampled $\vsi^1$ and $\vsi^2$ from $G_N^1$ and $G_N^2$, respectively. We say that there is chaos in disorder between $H_N^1$ and $H_N^2$ if the overlap is concentrated around a constant value. Our main result shows that this behavior holds as long as the two systems are decoupled, $\xi_0\neq \xi$, for $\xi$ and $\xi_0$ satisfying some mild assumptions:

\begin{theorem}[Disorder chaos]\label{dc}
Assume that \eqref{ass1} and \eqref{ass2} hold. If $\xi$ and $\xi_0$ are convex on $[-1,1]$ and are not identically equal to zero such that 
\begin{align}\label{dc:eq2}
\frac{\xi''(s)}{\xi''(s)+\xi_0''(-s)}\,\,\mbox{and}\,\,\frac{\xi''(s)}{\xi''(s)+\xi_0''(s)} 
\end{align}
{are both nondecreasing on $(0,1]$} and 
\begin{align}
\label{dc:eq3}
\xi_0''(s)<\xi''(|s|)
\end{align} 
for $s\in[-1,1]\setminus\{0\},$ then there is a constant $q^*$ such that for any $\varepsilon>0,$ 
\begin{align}\label{chaos}
\e G_N^1\times G_N^2\left((\vsi^1,\vsi^2):|R_{1,2}-q^*|>\varepsilon\right)\leq K_0\exp \left(-\frac{N}{K_0}\right)
\end{align}
for all $N\geq 1$, where $K_0$ is a constant independent of $N.$ Here, $q^*=0$ if $h=0$ and $q^*\in (0,\eta)$ if $h\neq 0.$

\end{theorem}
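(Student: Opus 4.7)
\textbf{Proof proposal for Theorem \ref{dc}.} The plan is to combine Gaussian concentration with the two-dimensional Guerra--Talagrand bound of Theorem \ref{thm-1}, analyzed through the variational representation of Theorem \ref{thm2} and the Parisi optimality relations from Proposition \ref{thm1}.

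First, by Gaussian concentration of Lipschitz functionals applied to $\frac{1}{N}\log Z_N^1$, $\frac{1}{N}\log Z_N^2$ and to $\frac{1}{N}\log\sum_{R_{1,2}=q}\exp(H_N^1+H_N^2)$, together with the Parisi formula $\frac{1}{N}\e\log Z_N^\ell\to\mathcal{P}(\mu_P)$, the statement \eqref{chaos} is reduced to producing a constant $q^*\in[-1,1]$ such that for every $\varepsilon>0$ there exists $\delta(\varepsilon)>0$ with
\begin{equation*}
\limsup_{N\to\infty}F_N(q)\;\leq\;2\mathcal{P}(\mu_P)-\delta(\varepsilon)\qquad\text{uniformly in }|q-q^*|>\varepsilon.
\end{equation*}
Such an exponential gap renders the constrained partition function negligible against $Z_N^1 Z_N^2$, which is the content of \eqref{chaos}.

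I then apply Theorem \ref{thm-1} with $\mu=\mu_P$, sign parameter $\iota=\mathrm{sign}(q)$, diagonal paths $\rho_{1,1}(s)=\rho_{2,2}(s)=s$, a nondecreasing off-diagonal path $\rho_{1,2}(s)=\rho_{2,1}(s)$ running from $0$ to $|q|$, and a Lagrange multiplier $\lambda$. The monotonicity hypothesis \eqref{dc:eq2} on the ratios $\xi''(s)/(\xi''(s)+\xi_0''(\pm s))$ is precisely what is needed to ensure that $\rho_{1,2}$ can be chosen so that the matrix $T(s)$ in \eqref{eq-10} is positive semi-definite on $[0,1]$ for both signs of $q$, while the strict comparison \eqref{dc:eq3} upgrades this to strict positive definiteness on the support of $\mu_P$. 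Invoking Theorem \ref{thm2}, I write $\e\Psi_{\mu_P}(\lambda,0,h,h)=\max_{v\in\mathcal{D}[0,1]}\bF_{\mu_P}^{0,1}(\lambda,v,(h,h))$ and compare it to $2\Phi_{\mu_P}(0,h)$ via Theorem \ref{thm-2}; testing the 2D supremum against the diagonal control $v=(u_{\mu_P},u_{\mu_P})$ and expanding through It\^o and the Parisi PDE \eqref{PDE} reorganizes the right-hand side of \eqref{thm-1:eq1} into the form $2\mathcal{P}(\mu_P)+\mathcal{E}(q,\lambda)$, where $\mathcal{E}(q,\lambda)$ collects only the Lagrange term $-\lambda q$ and the off-diagonal coupling contribution proportional to $\xi''(s)-\iota\zeta_{1,2}(s)$ integrated against $\e[\partial_x\Phi_{\mu_P}(s,X_1(s))\,\partial_x\Phi_{\mu_P}(s,X_2(s))]$.

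The constant $q^*$ and its companion $\lambda^*$ are identified as the unique stationary point of $\mathcal{E}$. When $h=0$, the symmetry $(\lambda,q)\mapsto(-\lambda,-q)$ together with the evenness of the 1D problem at zero field forces $q^*=0$. When $h\neq 0$, the first-order conditions for $\mathcal{E}$ take the form of an off-diagonal analogue of the fixed-point relation \eqref{thm1:eq1} satisfied at $q=\eta$, but with $\xi'$ replaced by $\xi_0'$; the strict inequality \eqref{dc:eq3} strictly displaces the solution, giving $q^*<\eta$ (since $\xi_0'<\xi'$ on $(0,1]$) and $q^*>0$ (since $h\neq 0$ biases the fixed-point equation away from zero, analogously to the argument underlying \eqref{eta}).

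The main obstacle is the critical regime $|q-q^*|\to 0$, where the first-order part of $\mathcal{E}$ vanishes by construction and no macroscopic linearization is available. The strategy here is to show that the Hessian of $\mathcal{E}$ at $(q^*,\lambda^*)$ is strictly negative definite; this negativity comes precisely from the strict determinant bound $\zeta_{1,1}(s)\zeta_{2,2}(s)-\zeta_{1,2}(s)\zeta_{2,1}(s)>0$ delivered by \eqref{dc:eq3} combined with the monotonicity \eqref{dc:eq2}, which translates via Girsanov and the variational representation of Theorem \ref{thm2} into a uniformly positive quadratic penalty on $(q-q^*)^2$. Combined with an $O(1)$ gap in the noncritical regime $|q-q^*|$ of order one, obtained from the strict concavity of $\mathcal{E}$ in the off-diagonal direction, this yields the uniform $\delta(\varepsilon)>0$ required in the reduction step. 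This quadratic estimate is the disorder-chaos analogue of the critical-case control in Proposition \ref{lem6} and replaces the iterative Gaussian computations of \cite{Chatt09,Chen13,Tal11}.
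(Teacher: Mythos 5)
There is a genuine gap in the central technical step. You plug $\mu=\mu_P$ into the GT bound and ``test the 2D supremum against the diagonal control $v=(u_{\mu_P},u_{\mu_P})$,'' but this goes in the wrong direction: plugging a particular $v$ into the representation of Theorem~\ref{thm2} gives $\Psi_{\mu_P}(\lambda,0,h,h)\geq \bF_{\mu_P}^{0,1}(\lambda,v,(h,h))$, a \emph{lower} bound, whereas the right-hand side of \eqref{thm-1:eq1} needs to be bounded \emph{above}. The diagonal control is not the maximizer of the 2D problem once $T$ has nonzero off-diagonal entries on $[\eta,|q|)$ (which is exactly the regime $|q|>\eta$), and the expression does not ``reorganize'' into $2\mathcal{P}(\mu_P)+\mathcal{E}(q,\lambda)$. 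The paper's decisive idea, which is absent here, is to use a \emph{different} measure $\mu$ from \eqref{lem6:eq1} rather than $\mu_P$: under the linear change of variables \eqref{eq01}--\eqref{eq001}, Proposition~\ref{lem6} splits the 2D variational value exactly into two 1D values plus the explicit nonpositive penalty $-\frac{1}{2}\int_0^{|q|}\frac{\alpha_{\mu_P}\zeta\zeta_0(\zeta-\zeta_0)}{(\zeta+\zeta_0)^2}\e(v_1-\iota v_2)^2\,dw$, and equality forces a contradiction via the uniqueness of the 1D maximizer (Lemma~\ref{lem00}) and $\det T>0$. In this light your reading of the hypotheses is also off: the monotonicity \eqref{dc:eq2} is needed so that the modified $\alpha_\mu$ of \eqref{lem6:eq1} is nondecreasing (i.e.\ $\mu$ is a genuine measure), while positive semi-definiteness of $T$ follows from $\xi_0''(s)\le\xi''(|s|)$.

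Two smaller points. First, the Hessian/negative-definiteness machinery at the end is unnecessary: the set $\{q:|q-q^*|\ge\varepsilon\}$ is compact and excludes $q^*$, so once one has $\Lambda(q)<2\mathcal{P}(\mu_P)$ pointwise there, the upper semicontinuity of $\Lambda$ (which follows from Theorem~\ref{sec1.2:thm1}) already delivers a uniform strictly negative gap; no second-order analysis near $q^*$ is needed. Second, your sketch of how $q^*$ is identified and why $q^*\in(0,\eta)$ when $h\neq 0$ is in the right spirit but needs the actual Step~I of Proposition~\ref{prop0}: showing $f'(q)<1$ on $(-\eta,\eta)$ via $\xi_0''<\xi''$, the bound \eqref{eq7}, and Cauchy--Schwarz is what gives uniqueness, and the strict Cauchy--Schwarz inequality together with \eqref{eq6} gives $q^*<\eta$ when $\xi\neq\xi_0$.
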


Form this theorem, the overlap is basically concentrated around a constant value $q^*$ if the two systems are decoupled in an appropriate way \eqref{dc:eq2} and \eqref{dc:eq3}. We emphasis that this behavior is completely different from the one under the assumption $\xi=\xi_0$, in which case the two systems are indeed identical, $H_N^1=H_N^2=H_N$, and the overlap typically has nontrivial limiting distribution in the low temperature regime. See, for instance, Examples $1$ and $2$ in \cite{Pan08}. The following two choices of $\xi$ and $\xi_0$ summarize the previously known results and give new examples of chaos in disorder. 

\begin{example}[mixed even $p$-spin models]\label{ex4}\rm
	
Assume that the two systems are mixed even $p$-spin models and they are correlated through $\xi_0=t\xi$ for some $t\in(0,1).$ This choice of $(\xi,\xi_0)$ corresponds to \eqref{eq--2} and was originally considered in Chatterjee \cite{Chatt09}, where he proved that the overlap is concentrated around $0$ provided with moment estimates when there is no external field $h=0$. Later Chen \cite{Chen13} established \eqref{chaos} in the presence of external field $h\neq 0.$ One can easily check that $\xi$ and $\xi_0$ are convex functions and \eqref{dc:eq2} and \eqref{dc:eq3} are satisfied. So Theorem \ref{dc} proves disorder chaos irrespective of the presence or absence of the external field.
\end{example}

The main merit of Theorem \ref{dc} is that it also covers the mixed $p$-spin models containing odd $p$-spin interactions for properly chosen sequence $(\gamma_p)_{p\geq 2}$.

\begin{example}
\label{ex5}\rm
Recall $\xi$ and $c$ from Example \ref{ex3}. Let $\xi_{1,1}=\xi_{2,2}=\xi$. For $t\in [0,1)$, set $\xi_0(s):=\beta^2(\gamma_{2p}^{2}s^{2p}+t\gamma_{2p+1}^2s^{2p+1})$ for $s\in[-1,1].$ Since $c<1$ and $t\in [0,1)$, one can check that $\xi_0$ is convex on $[-1,1].$ In addition, since
\begin{align*}
\frac{\xi''(s)}{\xi''(s)+\xi_0''(s)}&=\frac{1+cs}{2\left(1+\left(\frac{1+t}{2}\right)cs\right)},\\
\frac{\xi''(s)}{\xi''(s)+\xi_0''(-s)}&=\frac{1+cs}{2\left(1+\left(\frac{1-t}{2}\right)cs\right)},
\end{align*}
a direct differentiation with respect to $s$ and using $t\in [0,1)$ imply that they are both nondecreasing on $(0,1].$ On the other hand, 
\begin{align*}
\xi_0''(s)&=2p\beta^2 s^{2p-2}\left((2p-1)\gamma_{2p}^2+t(2p+1)\gamma_{2p+1}^2s\right)\\
&<2p\beta^2|s|^{2p-2}\left((2p-1)\gamma_{2p}^2+(2p+1)\gamma_{2p+1}^2|s|\right)=\xi''(|s|)
\end{align*}
for all $s\in [-1,1]\setminus\{0\}.$ Therefore, the conclusion of Theorem \ref{dc} holds for all $\beta>0.$
\end{example}


\section{Directional derivative of the Parisi functional}

The main results stated in Subsection \ref{sub1.1} will be established here. Throughout this section, we will use the variational representation formula \eqref{thm-2:eq1} for $\Phi_{\mu}(0,x)$ with $(s,t)=(0,1)$. Recall the associated maximizer $u_\mu$ from \eqref{max}. We start by computing the directional derivative of the Parisi functional, which relies on two technical lemmas. The first is the combination of \cite[Proposition 2]{AC13} and \cite[Lemm 2]{AC14}. 

\begin{lemma}
\label{sec2:lem2}
For any $\mu\in\mathcal{M}$ and $s\in[0,1]$, $\partial_x\Phi_{\mu}(s,\cdot)$ is odd, strictly increasing and uniformly bounded by $1$. In addition, the process $u_{\mu}$ satisfies 
\begin{align*}
u_{\mu}(b)-u_{\mu}(a)&=\int_a^b\zeta(w)^{1/2}\partial_{xx}\Phi_{\mu}(w,X(w))dB(w)
\end{align*}
for all $0\leq a\leq b\leq 1.$
\end{lemma}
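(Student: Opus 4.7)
My plan is to first prove the three pointwise properties of $\partial_x\Phi_\mu(s,\cdot)$ for atomic $\mu\in\mathcal{M}_d$, where the Hopf--Cole transformation produces an explicit formula for $\Phi_\mu$, and then to pass to a general $\mu\in\mathcal{M}$ by approximation. For $\mu=\sum_{i=0}^{k}a_i\delta_{q_i}$ with $0=q_0<q_1<\cdots<q_k\leq 1$, the PDE \eqref{PDE} collapses between successive atoms into linear heat equations, and at each atom into a log-Laplace (Hopf--Cole) average, so that $\partial_x\Phi_\mu(q_i,x)$ is expressible as a weighted expectation of $\partial_x\Phi_\mu(q_{i+1},x+\sqrt{\xi'(q_{i+1})-\xi'(q_i)}Z)$ with weight proportional to $\exp(m_i\Phi_\mu(q_{i+1},\cdot))$. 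The terminal derivative $\tanh x$ is odd, strictly increasing, and bounded by $1$; each heat convolution preserves all three properties, and each weighted expectation is a convex combination, which also preserves oddness (by symmetry of the Gaussian together with evenness of the weight $e^{m_i\Phi_\mu(q_{i+1},\cdot)}$), boundedness, and strict monotonicity (the latter via nondegeneracy of the Gaussian kernel). Iterating downward from $s=1$ to $s=0$ yields the three properties for every atomic $\mu$.

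Next I would extend to a general $\mu\in\mathcal{M}$ by choosing a sequence of atomic measures $\mu_n\to\mu$ in the metric $d$. By the Lipschitz continuity of $\mu\mapsto\Phi_\mu$ (Guerra \cite{G03}; this is the one-dimensional analogue of Theorem \ref{sec1.2:thm1}), $\Phi_{\mu_n}\to\Phi_\mu$ uniformly on compact sets. To pass the derivative through the limit I would use the uniform bound $|\partial_x\Phi_{\mu_n}|\leq 1$ together with standard interior regularity for the Parisi PDE, yielding locally uniform convergence of $\partial_x\Phi_{\mu_n}$ (see \cite[Proposition 2]{AC13}, which is exactly what the paper cites to guarantee existence of $\partial_x\Phi_\mu$ and $\partial_{xx}\Phi_\mu$). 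Oddness and the bound $\leq 1$ pass to the limit trivially; strict monotonicity is preserved because $\partial_{xx}\Phi_\mu>0$ can be obtained from a maximum principle applied to the Burgers-type equation $\partial_s(\partial_x\Phi)+\frac{\zeta}{2}(\partial_{xxx}\Phi+2\alpha_\mu\partial_x\Phi\partial_{xx}\Phi)=0$, or directly by differentiating the Hopf--Cole formula uniformly in $n$.

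For the stochastic integral representation of $u_\mu$, I would apply It\^o's formula to the process $w\mapsto\partial_x\Phi_\mu(w,X(w))$ along the controlled diffusion \eqref{max}. This yields
\begin{align*}
du_\mu(w)&=\Bigl(\partial_{sx}\Phi_\mu+\alpha_\mu(w)\zeta(w)\partial_x\Phi_\mu\,\partial_{xx}\Phi_\mu+\tfrac{\zeta(w)}{2}\partial_{xxx}\Phi_\mu\Bigr)(w,X(w))\,dw\\
&\qquad+\zeta(w)^{1/2}\partial_{xx}\Phi_\mu(w,X(w))\,dB(w).
\end{align*}
Differentiating the Parisi PDE \eqref{PDE} in $x$ shows that the bracketed drift vanishes identically, leaving only the martingale part. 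Integrating from $a$ to $b$ gives the stated identity. For general $\mu\in\mathcal{M}$ this computation is formal, so I would first carry it out for atomic $\mu$ (where $\Phi_\mu$ is classical $C^\infty$ away from the atoms, and the jumps in $\alpha_\mu$ do not affect the It\^o expansion since the integrand is continuous) and then take limits $\mu_n\to\mu$ using the uniform estimates on $\partial_{xx}\Phi_{\mu_n}$ and the $L^2$-isometry of the stochastic integral.

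The main obstacle I anticipate is this last limiting step: showing $\partial_{xx}\Phi_{\mu_n}(w,X_n(w))\to\partial_{xx}\Phi_\mu(w,X(w))$ in $L^2(dw\otimes d\mathbb{P})$, since both the diffusion and the coefficient depend on $\mu_n$ through the drift $\alpha_{\mu_n}\zeta\partial_x\Phi_{\mu_n}$. This requires either a quantitative stability estimate for the SDE \eqref{max} in $\mu$, or, more cleanly, invoking the full strength of the regularity results in \cite[Proposition 2]{AC13} and \cite[Lemma 2]{AC14} which give Lipschitz-type bounds on $\partial_{xx}\Phi_\mu$ uniformly in $\mu$. Since the statement is explicitly attributed to these references, I would simply cite them after giving the atomic computation above, rather than reprove the approximation argument in detail.
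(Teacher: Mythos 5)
The paper does not prove this lemma at all: immediately after stating it, the author writes that it is ``the combination of [Proposition 2, AC13] and [Lemma 2, AC14],'' and moves on. So there is no in-paper proof to compare against; your task here is really to reconstruct the arguments of those two cited references, and your reconstruction is essentially correct.

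A few remarks on the substance. The Hopf--Cole iteration for atomic $\mu$ with the weighted-expectation representation
\[
\partial_x\Phi_\mu(s,x)=\frac{\mathbb{E}\,\partial_x\Phi_\mu(q_{i+1},x+Z)\,e^{m_i\Phi_\mu(q_{i+1},x+Z)}}{\mathbb{E}\, e^{m_i\Phi_\mu(q_{i+1},x+Z)}}
\]
does give boundedness and, combined with the inductive evenness of $\Phi_\mu(q_{i+1},\cdot)$ (which you use but do not explicitly flag as part of the induction hypothesis), oddness. Your strict-monotonicity argument should be made slightly more explicit: differentiating the above in $x$ gives $\partial_{xx}\Phi_\mu(s,x)=\mathbb{E}_w[\partial_{xx}\Phi_\mu(q_{i+1},\cdot)]+m_i\,\mathrm{Var}_w(\partial_x\Phi_\mu(q_{i+1},\cdot))$, where $\mathbb{E}_w$ denotes the tilted expectation; the first term is strictly positive by the inductive hypothesis and nondegeneracy of $Z$, and the variance is nonnegative, giving $\partial_{xx}\Phi_\mu(s,\cdot)>0$. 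This is also where the limiting step for strict monotonicity requires care: pointwise convergence of strictly increasing functions only yields nondecreasing, so you do need the quantitative lower bound on $\partial_{xx}\Phi_{\mu_n}$ from the regularity estimates in the cited references (or the strong-maximum-principle argument you sketch) to close that gap, not merely the limit itself. Your It\^o computation for the martingale representation of $u_\mu$ is correct, and the cancellation of the drift upon differentiating \eqref{PDE} in $x$ is exactly right. The passage to general $\mu$ for the stochastic-integral identity is indeed the delicate point, and invoking the uniform estimates and convergence statements from AC13/AC14, as you propose, is the right thing to do and is what the author of the present paper also does implicitly by citing rather than reproving.
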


The second lemma allows us to take derivatives for maximum functions under mild assumptions.

\begin{lemma}
\label{lem1}
Let $K$ be a metric space and $I$ be an interval with right open edge. Let $f$ be a real-valued function on $K\times I$ and $g(y)=\sup_{a\in K}f(a,y).$
Suppose that there exists a $K$-valued continuous function $a(y)$ on $I$ such that $g(y)=f(a(y),y)$ and $\partial_yf$ is continuous on $K\times I$, then $g$ is right-differentiable with derivative $\partial_yf(a(y),y)$ for all $y\in I$.
\end{lemma}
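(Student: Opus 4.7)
The plan is to establish the right-derivative via a standard two-sided sandwich (the envelope-theorem style argument), using the continuous selection $a(\cdot)$ that the hypothesis provides. Fix $y_0\in I$ and take $h>0$ small enough that $y_0+h\in I$. Since $g(y_0)=f(a(y_0),y_0)$ and $g(y_0+h)\geq f(a(y_0),y_0+h)$, one obtains the lower bound
\begin{align*}
g(y_0+h)-g(y_0)\;\geq\; f(a(y_0),y_0+h)-f(a(y_0),y_0).
\end{align*}
Symmetrically, using $g(y_0+h)=f(a(y_0+h),y_0+h)$ and $g(y_0)\geq f(a(y_0+h),y_0)$,
\begin{align*}
g(y_0+h)-g(y_0)\;\leq\; f(a(y_0+h),y_0+h)-f(a(y_0+h),y_0).
\end{align*}

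Next, I would divide both bounds by $h$ and apply the mean value theorem in the $y$-variable, which is legitimate because the continuity of $\partial_y f$ on $K\times I$ implies in particular that $y\mapsto f(a,y)$ is differentiable for each fixed $a\in K$. This yields points $\xi_1(h),\xi_2(h)\in(y_0,y_0+h)$ with
\begin{align*}
\partial_y f(a(y_0),\xi_1(h))\;\leq\;\frac{g(y_0+h)-g(y_0)}{h}\;\leq\;\partial_y f(a(y_0+h),\xi_2(h)).
\end{align*}

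Finally I would send $h\downarrow 0$. The lower bound converges to $\partial_y f(a(y_0),y_0)$ directly from the continuity of $\partial_y f$ in its second argument. For the upper bound, the continuity of $a(\cdot)$ gives $a(y_0+h)\to a(y_0)$ in $K$ while $\xi_2(h)\to y_0$; thus joint continuity of $\partial_y f$ on $K\times I$ forces the upper bound to converge to the same limit $\partial_y f(a(y_0),y_0)$. Squeezing yields the claim.

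The only delicate point is the passage to the limit in the upper bound, which is where we truly use \emph{joint} continuity of $\partial_y f$ rather than separate continuity, together with the continuity of the selection $a(\cdot)$; these are exactly the hypotheses supplied. The right-openness of $I$ ensures the forward increments $y_0+h$ remain in $I$. No further regularity of $f$ in the $K$-variable (e.g.\ differentiability) is needed, which is important since in the intended application $K$ will be a space of probability measures where one only has a continuous selection of maximizers.
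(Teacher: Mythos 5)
Your proof is correct and follows essentially the same two-sided sandwich (envelope) argument as the paper: the same lower and upper bounds on the difference quotient via the continuous selection $a(\cdot)$, the mean value theorem, and joint continuity of $\partial_y f$ to pass to the limit. The only cosmetic difference is that you apply the mean value theorem symmetrically to both bounds, whereas the paper handles the lower bound via a direct $\liminf$, but this does not change the substance.
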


\begin{proof}
Let $y\in I.$ Consider any $h>0$ that satisfies $y+h\in I.$ Observe that
\begin{align*}
\frac{g(y+h)-g(y)}{h}&=\frac{f(a(y+h),y+h)-f(a(y),y+h)}{h}+\frac{f(a(y),y+h)-f(a(y),y)}{h}\\
&\geq \frac{f(a(y),y+h)-f(a(y),y)}{h}.
\end{align*}
Therefore, $\liminf_{h\downarrow 0}h^{-1}(g(y+h)-g(y))\geq \partial_y f(a(y),y).$
On the other hand, we also have
\begin{align*}
\frac{g(y+h)-g(y)}{h}&=\frac{f(a(y+h),y+h)-f(a(y+h),y)}{h}+\frac{f(a(y+h),y)-f(a(y),y)}{h}\\
&\leq \frac{f(a(y+h),y+h)-f(a(y+h),y)}{h}\\
&=\partial_y f(a(y+h),y(h))
\end{align*}
for some $y(h)\in I$ with $y(h)\rightarrow y$ as $h\downarrow 0,$ where the last equation used the mean value theorem. Finally, using the continuity of $\partial_yf$, we obtain $
\limsup_{h\downarrow 0}h^{-1}(g(y+h)-g(y))\leq \partial_yf(a(y),y).
$
This finishes our proof.
\end{proof}

\begin{proof}[\bf Proof of Theorem \ref{prop1}] 
Define $$
f(u,\theta)=\log 2+F_{\mu_\theta}^{0,1}(u,h)-\frac{1}{2}\int_0^1\alpha_{\mu_\theta}(s)s\zeta(s)ds
$$
for $(u,\theta)\in D[0,1]\times [0,1].$ Recall the definition of $F_{\mu_\theta}^{0,1}$, 
\begin{align*}
f(u,\theta)&=\log 2+\e\left[\log\cosh\left(h+\int_0^1\alpha_{\mu_\theta}(s)\zeta(s)u(s)ds+\int_0^1\zeta(s)^{1/2}dB(s)\right)\right.\\
&\qquad\qquad\qquad\qquad-\left.\frac{1}{2}\int_0^1\alpha_{\mu_\theta}(s)\zeta(s)(u(s)^2+s)ds\right].
\end{align*}
Its partial derivative with respect to $\theta$ is clearly continuous on $D[0,1]\times[0,1]$ and a direct computation gives
\begin{align*}
\partial_\theta f(u_{\mu_\theta},\theta)
&=\e\left[u_{\mu_\theta}(1)\int_0^1\zeta(s)(\alpha_\mu(s)-\alpha_{\mu_0}(s))u_{\mu_\theta}(s)ds\right.\\
&\left.\qquad\qquad-\frac{1}{2}\int_0^1\zeta(s)(\alpha_\mu(s)-\alpha_{\mu_0}(s))(u_{\mu_\theta}(s)^2+s)ds\right].
\end{align*}
Since $\{u_{\mu_\theta}(r)\}_{0\leq r\leq 1}$ is a martingale from Lemma \ref{sec2:lem2}, the first term can be computed as
\begin{align*}
\int_0^1\zeta(s)(\alpha_\mu(s)-\alpha_{\mu_0}(s))\e u_{\mu_\theta}(s)^2ds
\end{align*}
and thus,
\begin{align*}
\partial_\theta f(u_{\mu_\theta},\theta)
&=\frac{1}{2}\int_0^1\zeta(s)(\alpha_\mu(s)-\alpha_{\mu_0}(s))(\e u_{\mu_\theta}(s)^2-s)ds.
\end{align*}
Applying Lemma \ref{lem1} gives \eqref{prop1:eq1}. From \eqref{prop1:eq1}, if $\mu_0$ is the Parisi measure, then $(ii)$ clearly holds. Assuming $(ii)$, we note that for any $\varepsilon>0,$ there exists some $\delta>0$ such that $\mathcal{P}(\mu_\theta)-\mathcal{P}(\mu_0)\geq -\varepsilon \theta$ whenever $0<\theta<\delta.$ This and the convexity of $\mathcal{P}$ imply
$$\mathcal{P}(\mu_\theta)\leq (1-\theta) \mathcal{P}(\mu_0)+\theta\mathcal{P}(\mu).$$
So
\begin{align*}
&\theta\left(\mathcal{P}(\mu)-\mathcal{P}(\mu_0)\right)=\theta\mathcal{P}(\mu)+(1-\theta)\mathcal{P}(\mu_0)-\mathcal{P}(\mu_0)\geq -\varepsilon \theta.
\end{align*}
Therefore, $\mathcal{P}(\mu)\geq \mathcal{P}(\mu_0)-\varepsilon.$ Since this inequality is true for all $\varepsilon>0,$ we have that $\mathcal{P}(\mu)\geq \mathcal{P}(\mu_0).$ In other words, $\mu_0$ is the minimizer of the Parisi functional and the uniqueness  of the Parisi measure \cite{AC14} implies that $\mu_0=\mu_P$. So $(ii)$ implies $(i).$ Finally, we finish our proof by proving that $(iii)$ yields $(ii).$ Let $\mu\in\mathcal{M}_d^k$ for some $k\geq 0$. Write $\mu=\sum_{p=0}^k a_p\delta_{q_p}$ with $a_p\geq 0$ and $\sum_{p=0}^ka_p=1.$ Define $\mu^p=\delta_{q_p}$ and $\mu_\theta^p=(1-\theta)\mu_0+\theta\mu^p.$ Now applying $(iii)$ to $\mu^p$, we obtain
\begin{align*}
\left.\frac{d}{d\theta}\mathcal{P}(\mu^p_\theta)\right|_{\theta=0}&=\frac{1}{2}\int_0^1\zeta(s)(\alpha_{\mu^p}(s)-\alpha_{\mu_0}(s))(\e u_{\mu_0}(s)^2-s)ds\geq 0
\end{align*}
and thus, using $\sum_{p=0}^ka_p=1$ and $a_p\geq 0,$
\begin{align*}
\left.\frac{d}{d\theta}\mathcal{P}(\mu_\theta)\right|_{\theta=0}&=\frac{1}{2}\int_0^1\zeta(s)(\alpha_\mu(s)-\alpha_{\mu_0}(s))(\e u_{\mu_0}(s)^2-s)ds\\
&=\sum_{p=0}^ka_p\cdot\frac{1}{2}\int_0^1\zeta(s)(\alpha_{\mu^p}(s)-\alpha_{\mu_0}(s))(\e u_{\mu_0}(s)^2-s)ds\\
&=\sum_{p=0}^ka_p\left.\frac{d}{d\theta}\mathcal{P}(\mu^p_\theta)\right|_{\theta=0}\\
&\geq 0.
\end{align*}
Here the second equation used the observation that $\alpha_{\mu}=\sum_{p=0}^ka_p\alpha_{\mu^p}.$
Since this inequality holds for arbitrary probability measures in  $\mathcal{M}_d$, an approximation argument using the definition of the right derivative of $\mathcal{P}$ implies that $\left.\frac{d}{d\theta}\mathcal{P}(\mu_\theta)\right|_{\theta=0}\geq 0$ for all $\mu\in\mathcal{M}$. So we obtain $(ii).$
\end{proof}

\begin{proof}[\bf Proof of Proposition \ref{thm1}] First we claim that \eqref{thm1:eq1} and \eqref{thm1:eq2} hold for $q\in S\cap(0,1).$ Assume $q\in S\cap(0,1)$ is isolated. Define $\mu^1,\mu^2\in\mathcal{M}$ such that
\begin{align*}
\alpha_{\mu^1}(w)&=\left\{
\begin{array}{ll}
\alpha_{\mu_P}(w),&\mbox{if $w\in [0,q-\varepsilon)\cup [q,1]$},\\
\alpha_{\mu_P}(q),&\mbox{if $w\in [q-\varepsilon,q)$},
\end{array}
\right.\\
\alpha_{\mu^2}(w)&=\left\{
\begin{array}{ll}
\alpha_{\mu_P}(w),&\mbox{if $w\in [0,q)\cup [q+\varepsilon,1]$},\\
\alpha_{\mu_P}(q-),&\mbox{if $w\in [q,q+\varepsilon)$}.
\end{array}
\right.
\end{align*}
From Theorem \ref{prop1}, we have
\begin{align}
\begin{split}\label{sec2:eq1}
\left.\frac{d}{d\theta}\mathcal{P}(\mu_\theta^1)\right|_{\theta=0}&=\frac{1}{2}\int_{q-\varepsilon}^q\zeta(r)(\alpha_{\mu_P}(q)-\alpha_{\mu_P}(w))(\e u_{\mu_P}(w)^2-w)dw\geq 0,\\
\left.\frac{d}{d\theta}\mathcal{P}(\mu_\theta^2)\right|_{\theta=0}&=\frac{1}{2}\int_q^{q+\varepsilon}\zeta(r)(\alpha_{\mu_P}(q-)-\alpha_{\mu_P}(w))(\e u_{\mu_P}(w)^2-w)dw\geq 0.
\end{split}
\end{align}
Note that $\alpha_{\mu_P}(q)>\alpha_{\mu_P}(w)$ for $w\in[q-\varepsilon,q)$ and $\alpha_{\mu_P}(q-)<\alpha_{\mu_P}(w)$ for $w\in [q,q+\varepsilon].$
Since $\e u_{\mu_P}(r)^2$ is a continuous function, the inequalities \eqref{sec2:eq1} imply that there exists some $\varepsilon_0>0$ such that $\e u_{\mu_P}(w)^2\geq w$ on $[q-\varepsilon_0,q]$ and $\e u_{\mu_P}(w)^2\leq w$ on $[q,q+\varepsilon_0]$, which clearly gives \eqref{thm1:eq1}. Now suppose that $q$ is an accumulation point of $S\cap(0,1).$ Then there exists $(q_n)_{n\geq 1}\subset S\cap(0,1)$ such that either $q_n\uparrow q$ or $q_n\downarrow q.$ Assuming the first case, we consider $\mu^3,\mu^4\in\mathcal{M}$ defined through
\begin{align*}
\alpha_{\mu^3}(w)&=\left\{
\begin{array}{ll}
\alpha_{\mu_P}(w),&\mbox{if $w\in [0,q-\varepsilon)\cup [q,1]$},\\
\alpha_{\mu_P}(q),&\mbox{if $w\in [q-\varepsilon,q)$},
\end{array}
\right.\\
\alpha_{\mu^4}(w)&=\left\{
\begin{array}{ll}
\alpha_{\mu_P}(w),&\mbox{if $w\in [0,q-\varepsilon)\cup [q,1]$},\\
\alpha_{\mu_P}(q-\varepsilon),&\mbox{if $w\in [q-\varepsilon,q)$}.
\end{array}
\right.
\end{align*}
From Theorem \ref{prop1}, we have
\begin{align}
\begin{split}\label{sec2:eq3}
\left.\frac{d}{d\theta}\mathcal{P}(\mu_\theta^3)\right|_{\theta=0}&=\frac{1}{2}\int_{q-\varepsilon}^q\zeta(w)(\alpha_{\mu_P}(q)-\alpha_{\mu_P}(w))(\e u_{\mu_P}(w)^2-w)dw\geq 0,
\end{split}\\
\begin{split}\label{sec2:eq4}
\left.\frac{d}{d\lambda}\mathcal{P}(\mu_\theta^4)\right|_{\lambda=0}&=\frac{1}{2}\int_{q-\varepsilon}^{q}\zeta(w)(\alpha_{\mu_P}(q-\varepsilon)-\alpha_{\mu_P}(w))(\e u_{\mu_P}(w)^2-w)dw\geq 0.
\end{split}
\end{align}
From the condition $q_n\uparrow q,$ we see that $\alpha_{\mu_P}(q)>\alpha_{\mu_P}(w)$ and $\alpha_{\mu_P}(q-\varepsilon)<\alpha_{\mu_P}(w)$ for  $w\in [q-\varepsilon,q).$ The inequality \eqref{sec2:eq3} then gives $\e u_{\mu_P}(w)^2\geq w$ for all $w$ sufficiently close to $q$ from the left-hand side. On the other hand, since $q_n\uparrow q$, the inequality \eqref{sec2:eq4} implies that $\e u_{\mu_P}(w)^2\leq w$ for all $w$ sufficiently close to $q$ again from the left-hand side. Therefore, $\e u_{\mu_P}(w)^2=w$ on $[q-\varepsilon_0',q]$ for some $\varepsilon_0'>0$. Similarly, the case $q_n\downarrow q$ also implies $\e u_{\mu_P}(w)^2=w$ on $[q,q+\varepsilon_0'']$ for some $\varepsilon_0''>0$ by using 
\begin{align*}
\alpha_{\mu^5}(w)&=\left\{
\begin{array}{ll}
\alpha_{\mu_P}(w),&\mbox{if $w\in [0,q)\cup [q+\varepsilon,1]$},\\
\alpha_{\mu_P}(q),&\mbox{if $w\in [q,q+\varepsilon)$},
\end{array}
\right.\\
\alpha_{\mu^6}(w)&=\left\{
\begin{array}{ll}
\alpha_{\mu_P}(w),&\mbox{if $w\in [0,q)\cup [q+\varepsilon,1]$},\\
\alpha_{\mu_P}(q+\varepsilon),&\mbox{if $w\in [q,q+\varepsilon)$}.
\end{array}\right.
\end{align*}
These yield \eqref{thm1:eq1}.
To show \eqref{thm1:eq2}, we note that from Lemma \ref{sec2:lem2},
\begin{align}
\begin{split}\label{sec2:eq2}
\e u_{\mu_P}(b)^2-\e u_{\mu_P}(a)^2&=\int_a^b\zeta(r)\e\partial_{xx}\Phi_{\mu_P}(w,X(w))^2dw.
\end{split}
\end{align}
From the discussion,above, we see that either $\e u_{\mu_P}(w)^2\leq w$ on $[q,q']$ for some $q'>q$ or $\e u_{\mu_P}(w)^2\geq w$ on $[q'',q]$ for some $q''<q.$ If we are in the first situation, then for all $s\in[q,q'],$ \eqref{sec2:eq2} implies
\begin{align*}
\int_q^s\zeta(w)\e\partial_{xx}\Phi_{\mu_P}(w,X(w))^2dw=\e u_{\mu_P}(s)^2-\e u_{\mu_P}(q)^2\leq s-q=\int_{q}^s 1dw.
\end{align*}
and hence \eqref{thm1:eq2}. In the second situation, the same argument also yields
\begin{align*}
\int_s^q\zeta(w)\e\partial_{xx}\Phi_{\mu_P}(w,X(w))^2dw= \e u_{\mu_P}(q)^2-\e u_{\mu_P}(s)^2\leq q-s=\int_{s}^q 1dw
\end{align*}
for all $s\in[q'',q]$, which concludes \eqref{thm1:eq2} and completes the proof of our claim. 

Finally, note that Lemma \ref{sec2:lem2} yields $\e u_{\mu_P}(r)^2<1$ for all $0\leq r\leq 1$. If $1\in S,$ one may take $\mu=\delta_0$ and $\mu_0=\mu_P$ in \eqref{prop1:eq1} to obtain a contradiction since $\frac{d}{d\theta}\mathcal{P}(\mu_\theta)|_{\theta=0}<0.$ Hence, $1\notin S.$ If now $0\in S,$ then no matter it is an isolated point or an accumulation point of $S$, one can argue exactly in the same way as above to obtain $\e u_{\mu_P}(w)^2\leq w$ for all $w\in[0,\varepsilon_0]$ for some $\varepsilon_0>0.$ Consequently, $\e u_{\mu_P}(0)^2=0$. Since
\begin{align*}
\int_0^s\zeta(w)\e\partial_{xx}\Phi_{\mu_P}(w,X(w))^2dw=\e u_{\mu_P}(s)^2-\e u_{\mu_P}(0)^2\leq s-0=\int_{0}^s 1dw
\end{align*}
for all $s\in [0,\varepsilon_0]$,
we obtain \eqref{thm1:eq2} with $q=0.$ This finishes our proof. \end{proof}

\begin{proof}[\bf Proof of Proposition \ref{prop4}]
For any $q\in[0,1],$ define $\mu_\theta^q=(1-\theta)\mu_0+\theta \delta_q$ for $0\leq \theta\leq 1.$ Since $\mu_\theta^q\in \mathcal{M}_d^{k+1},$ it follows from \eqref{prop4:eq1} that $\left.\frac{d}{d\theta}\mathcal{P}(\mu_\theta^q)\right|_{\theta=0}\geq 0.$ Therefore, $\mu_0$ is the Parisi measure by applying Theorem \ref{prop1} $(ii).$
\end{proof}


\section{The optimal stochastic control problem for $\Psi_\mu$}

In this section, we will prove Theorems \ref{sec1.2:thm1}, \ref{thm2} and \ref{thm-1} following the ideas mostly from \cite{AC14}. Our argument relies on the following calculus lemma, which provides an explicit expression for the function $\Psi_\mu$ when $\mu\in \mathcal{M}_d$. As this lemma is not directly related to the core of our arguments, we defer its proof to Appendix. 

\begin{lemma}\label{lem3}
Let $0\leq a<b\leq 1$ and $0\leq m\leq 1$.  Recall $\rho_{\ell,\ell'}$ from \eqref{eq-10}. Suppose that they are differentiable on $[a,b)$.
Let $A$ be a smooth function on $\mathbb{R}^2$ with $\limsup_{|\vx|\rightarrow\infty}|A(\vx)|/|\vx|<\infty.$ For $(s,\vx)\in[a,b]\times\mathbb{R}^2$, set 
\begin{align*}
L(s,\vx)&=\frac{1}{m}\log \e\exp mA\left(x_1+y_1(s),x_2+y_2(s)\right),
\end{align*}
where $(y_1(s),y_2(s))$ is a two-dimensional Gaussian random vector with mean zero and covariance,
\begin{align*}
\e y_{1}(s)y_{1}(s)&=\xi_{1,1}'(\rho_{1,1}(b))-\xi_{1,1}'(\rho_{1,1}(s)),\\
\e y_{1}(s)y_{2}(s)&=\xi_{1,2}'(\iota\rho_{1,2}(b))-\xi_{1,2}'(\iota\rho_{1,2}(s)),\\
\e y_{2}(s)y_{1}(s)&=\xi_{2,1}'(\iota\rho_{2,1}(b))-\xi_{2,1}'(\iota\rho_{2,1}(s)),\\
\e y_{2}(s)y_{2}(s)&=\xi_{2,2}'(\rho_{2,2}(b))-\xi_{2,2}'(\rho_{1,1}(s)).
\end{align*}
Then $L$ satisfies 
\begin{align}\label{lem3:eq1}
\partial_sL&=-\frac{1}{2}\left(\left<T ,\triangledown^2L\right>+m\left<T \triangledown L,\triangledown L\right>\right)
\end{align}
for $(s,\vx)\in [a,b)\times\mathbb{R}^2$ with terminal condition $L(b,\vx)=A(\vx).$ Moreover, if $\partial_{x_i}A$ is uniformly bounded by $1$, so is $\partial_{x_i}L$.
\end{lemma}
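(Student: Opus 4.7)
The plan is to apply the Hopf-Cole transformation, reducing the semilinear PDE \eqref{lem3:eq1} to a linear heat-type equation for the exponentiated unknown. Set
$$G(s,\vx) := \exp\bigl(mL(s,\vx)\bigr) = \e \exp m A\bigl(x_1+y_1(s),\,x_2+y_2(s)\bigr).$$
I will first show that $G$ satisfies the terminal-value problem
$$\partial_s G(s,\vx) = -\tfrac{1}{2}\bigl\langle T(s),\triangledown^2 G(s,\vx)\bigr\rangle,\qquad G(b,\vx)=\exp m A(\vx),$$
and then recover \eqref{lem3:eq1} by logarithmic differentiation.

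For the linear step, read off from the stated covariances that the $2\times 2$ covariance matrix $\Sigma(s)$ of $(y_1(s),y_2(s))$ has entries $\Sigma_{\ell,\ell'}(s)=\xi_{\ell,\ell'}'(\rho_{\ell,\ell'}(b))-\xi_{\ell,\ell'}'(\rho_{\ell,\ell'}(s))$ (with the appropriate $\iota$ factors on the off-diagonals). Comparing with the definition \eqref{eq-10} of $T$ gives $\Sigma'(s)=-T(s)$ and $\Sigma(b)=0$, so $\Sigma(s)=\int_s^b T(r)\,dr$, which is positive semi-definite by assumption. Writing $G(s,\vx)=\int \exp(m A(\vz))\,\phi_{\Sigma(s)}(\vz-\vx)\,d\vz$ with $\phi_{\Sigma}$ the centered Gaussian density of covariance $\Sigma$, and using the identity $\partial_s\phi_{\Sigma(s)}(\vy)=\tfrac{1}{2}\langle\Sigma'(s),\triangledown^2_{\vy}\phi_{\Sigma(s)}(\vy)\rangle$ together with $\triangledown^2_{\vy}\phi_{\Sigma(s)}(\vz-\vx)=\triangledown^2_{\vx}\phi_{\Sigma(s)}(\vz-\vx)$, the PDE for $G$ follows by differentiating through the integral. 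The terminal condition is immediate, since $\Sigma(b)=0$ forces $\vy(b)\equiv 0$.

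The PDE for $L=m^{-1}\log G$ is then obtained by routine chain-rule identities. Using $\partial_s L=\partial_s G/(mG)$ and
$$\partial_{x_i x_j} L=\frac{\partial_{x_i x_j} G}{mG}-m\,\partial_{x_i}L\cdot\partial_{x_j}L,$$
one solves for $\triangledown^2 G/G = m\triangledown^2 L+m^2\,\triangledown L\otimes \triangledown L$, substitutes into the linear equation for $G$, and collects the quadratic terms into $\langle T\,\triangledown L,\triangledown L\rangle$ to arrive at \eqref{lem3:eq1}. For the gradient bound, differentiating under the expectation yields
$$\partial_{x_i}L(s,\vx)=\frac{\e\bigl[\partial_iA(\vx+\vy(s))\,\exp m A(\vx+\vy(s))\bigr]}{\e\bigl[\exp mA(\vx+\vy(s))\bigr]},$$
a tilted-measure average of $\partial_iA$ that inherits any pointwise bound on $\partial_iA$, in particular the assumption $|\partial_iA|\leq 1$.

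The main technical obstacle is not conceptual but lies in justifying the interchange of differentiation and expectation and the integration by parts used to move $\triangledown^2_{\vy}$ onto the $\vx$ variable in the derivation of the linear equation. This is exactly where the growth hypothesis $\limsup_{|\vx|\to\infty}|A(\vx)|/|\vx|<\infty$ is used: it allows one to dominate $\exp(mA)$ by a factor of moderate exponential growth, so that the Gaussian weight $\phi_{\Sigma(s)}$ still produces an integrable product together with each spatial derivative and any boundary contribution at infinity vanishes. Once this standard dominated-convergence verification is in place, the remaining computations are purely algebraic.
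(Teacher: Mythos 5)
Your proof is correct and takes a genuinely different route from the paper's. The paper does not invoke the Hopf-Cole transformation explicitly in this lemma; instead it parametrizes the Gaussian vector as $(y_1(s),y_2(s))=(\iota\sqrt{c_1(s)}z_0+\sqrt{d_1(s)}z_1,\ \sqrt{c_2(s)}z_0+\sqrt{d_2(s)}z_2)$ in terms of i.i.d.\ standard Gaussians, computes $\e y_i'(s)y_j(s)=-\tfrac12\zeta_{i,j}(s)$, and then differentiates $L$ in $s$ directly, moving the derivative onto the Gaussian argument via classical Gaussian integration by parts. It then computes $\partial_{x_i}L,\ \partial_{x_ix_j}L$ as tilted averages and verifies the PDE algebraically. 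Your route instead passes to $G=\exp(mL)$, identifies $\Sigma(s)=\int_s^b T(r)\,dr$ as the covariance, uses the Kolmogorov forward equation $\partial_s\phi_{\Sigma(s)}=\tfrac12\langle\Sigma'(s),\triangledown^2\phi_{\Sigma(s)}\rangle$ to get the linear backward equation for $G$, and then logarithmically differentiates. Both are standard; yours is arguably cleaner conceptually because the semilinear structure is handled once and for all by the change of unknown, whereas the paper's proof handles it by a direct algebraic check. What the paper's parametrization buys is that it sidesteps the case where $\Sigma(s)$ is merely positive semi-definite: your convolution formula $G(s,\vx)=\int e^{mA(\vz)}\phi_{\Sigma(s)}(\vz-\vx)\,d\vz$ presupposes $\Sigma(s)$ is nondegenerate so that the density $\phi_{\Sigma(s)}$ exists, and the lemma only assumes $T\ge0$, which allows $\Sigma(s)$ to be singular on part of $[a,b)$. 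This is a real (if minor) gap; it is repaired by a regularization $\Sigma(s)\leadsto\Sigma(s)+\varepsilon I$ and passage to the limit, but you should say so explicitly since the degenerate case genuinely occurs in the applications of Section 5. Also a small slip of terminology: passing $\triangledown^2_{\vy}$ over to $\triangledown^2_{\vx}$ in the convolution uses only the chain rule (second derivatives in $\vz-\vx$ are the same in either variable), not an integration by parts at infinity; the growth hypothesis on $A$ is needed for differentiating under the integral, not for killing boundary terms.
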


\begin{proof}[\bf Proof of Theorem \ref{thm2} for $\boldsymbol{\mu\in\mathcal{M}_d}$] Suppose that $\mu$ is atomic with jumps at $\{q_p\}_{p=1}^k$, where $q_p<q_{p+1}$ for $1\leq p\leq k-1$. Let $q_0=0$, $q_{k+1}=1$ and $m_p=\alpha_{\mu}(q_p)$ for $0\leq p\leq k.$ Without loss of generality, we may assume that the non-differentiable points of $\rho_{\ell,\ell'}$ are located at $\{q_p\}_{p=1}^k$ and in addition, $q_j=s$ and $q_{j'}=t$ for some $0\leq j<j'\leq k+1.$ Note that since $(y_p^1(s),y_p^2(s))$ equals $\int_s^{q_{p+1}}T(w)^{1/2}d\mathcal{B}(w)$ in distribution for each $s\in [q_p,q_{p+1}]$ and $0\leq p\leq k,$ we can write by Lemma \ref{lem3},
\begin{align}\label{proof:thm2:eq1}
\Psi_\mu(\lambda,q_p,x)&=\frac{1}{m_p}\log\e\exp m_p\Psi_\mu\Bigl(\lambda,q_{p+1},x+\int_{q_p}^{q_{p+1}}T(w)^{1/2}d\mathcal{B}(w)\Bigr),\,\,\forall j\leq p<j'.
\end{align}
We claim that
\begin{align}\label{eq-1}
\Psi_{\mu}(\lambda,s,\vx)&\geq \max_{v\in\mathcal{D}[s,t]}\bF_{\mu}^{s,t}(\lambda,v,\vx).
\end{align}
For $v\in\mathcal{D}[s,t],$ set
\begin{align*}
Z_p&=\exp\Bigl(-\frac{1}{2}\int_{q_p}^{q_{p+1}}m_p^2\la T(w)v(w),v(w)\ra dw-\int_{q_q}^{q_{p+1}}m_p\la T(w)^{1/2}v(w), d\mathcal{B}(w)\ra\Bigr).
\end{align*}
Define conditional probability measure $\tilde{\p}(A)=\e [1_AZ_p|\mathscr{G}_{q_p}]$ and set $\tilde{\mathcal{B}}(r)=\int_{q_p}^{r}m_pT(w)^{1/2}v(w)dw+\mathcal{B}(r)$ for $r\in[q_{p},q_{p+1}].$ We use $\tilde{\e}$ to denote the expectation with respect to $\tilde{\p}.$
Since the Girsanov theorem \cite[Theorem 5.1]{KS} says that $\tilde{\mathcal{B}}$ is a standard Brownian motion starting from $\mathcal{B}(q_p)$ under $\tilde{\mathbb{P}}$, we can write
\begin{align*}
&\e\exp m_p\Psi_\mu\Bigl(\lambda,q_{p+1},\vx+\int_{q_p}^{q_{p+1}}T(w)^{1/2}d\mathcal{B}(w)\Bigr)\\
&=\tilde{\e}\exp m_p\Psi_\mu\Bigl(\lambda,q_{p+1},\vx+\int_{q_p}^{q_{p+1}}T(w)^{1/2}d\tilde{\mathcal{B}}(w)\Bigr)\\
&=\e\Bigl[\exp m_p\Psi_\mu\Bigl(\lambda,q_{p+1},\vx+\int_{q_p}^{q_{p+1}}m_pT(w)v(w)dw+\int_{q_p}^{q_{p+1}}T(w)^{1/2}d\mathcal{B}(w)\Bigr)\\
&\qquad\cdot \exp\Bigl(-\frac{1}{2}\int_{q_p}^{q_{p+1}}m_p^2\la T(w)v(w),v(w)\ra dw-\int_{q_q}^{q_{p+1}}m_pT(w)^{1/2}v(w)\cdot d\mathcal{B}(w)\Bigr)\Big|\mathscr{G}_{q_p}\Bigr].
\end{align*}
From \eqref{proof:thm2:eq1} and Jensen's inequality $m^{-1}\log \e [\exp mA|\mathscr{G}_{q_p}]\geq \e[A|\mathscr{G}_{q_p}]$ for any measurable $A$ and $m>0$, it follows
\begin{align*}
\Psi_\mu(\lambda,q_p,\vx)
&\geq\e \Bigl[\Psi_\mu\Bigl(\lambda,q_{p+1},\vx+\int_{q_p}^{q_{p+1}}\alpha_{\mu}(w)T(w)v(w)dw+\int_{q_p}^{q_{p+1}}T(w)^{1/2}d\mathcal{B}(w)\Bigr)\\
&\qquad\qquad\qquad
-\frac{1}{2}\int_{q_p}^{q_{p+1}}\alpha_\mu(w)\la T(w)v(w),v(w)\ra dw\Big|\mathscr{G}_{q_p}\Bigr]
\end{align*}
for all $j\leq p<j'.$ Using this and conditional expectation, a decreasing iteration argument over $p$ from $j'-1$ to $j$ gives
\begin{align*}
\Psi_\mu(\lambda,s,\vx)&=\Psi_\mu(\lambda,q_j,\vx)\\
&\geq \e\Bigl[\Psi_\mu\Bigl(\lambda,q_{j'},\vx+\sum_{p=j}^{j'-1}\int_{q_p}^{q_{p+1}}\alpha_\mu(w)T(w)v(w)dw+\sum_{p=j}^{j'-1}\int_{q_p}^{q_{p+1}}\zeta(w)^{1/2}d\mathcal{B}(w)\Bigr)\\
&\qquad\qquad\qquad
-\frac{1}{2}\sum_{p=j}^{j'-1}\int_{q_p}^{q_{p+1}}\alpha_\mu(w)\la T(w)v(w),v(w)\ra dw\Bigr]\\
&=\bF_{\mu}^{q_j,q_{j'}}(\lambda,v,\vx)\\
&=\bF_\mu^{s,t}(\lambda,v,\vx).
\end{align*}
Since this is true for arbitrary $v\in\mathcal{D}[s,t],$ this gives \eqref{eq-1}. 

Note that since $|\partial_{x_1}\Psi_{\mu}(\lambda,1,\cdot)|$ and $|\partial_{x_2}\Psi_{\mu}(\lambda,1,\cdot)|$ are uniformly bounded above by $1,$ Lemma \ref{lem3} combined with an iteration argument using \eqref{proof:thm2:eq1} yields that $|\partial_{x_1}\Psi_{\mu}(\lambda,r,\cdot)|$ and $|\partial_{x_2}\Psi_{\mu}(\lambda,r,\cdot)|$ are also uniformly bounded by $1$ for any $s\leq r\leq t$, which clearly imply that $v_\mu\in \mathcal{D}[s,t].$ Therefore, to finish the proof, it remains to show that $\mathcal{F}_\mu^{s,t}(\lambda,v_\mu,\vx)=\Psi_\mu(s,\vx).$ To this end, we define
\begin{align*}
Y(r)&=\Psi_\mu(\lambda,r,\vX(r))-\int_{s}^r\alpha_{\mu}(w)\left<T(w)v_\mu(w),v_\mu(w)\right>dw-\int_s^rT(w)^{1/2}d\mathcal{B}(w).
\end{align*}
Observe that
\begin{align*}
\e Y(s)&=\e\Psi_\mu(\lambda,s,\vX(s))=\Psi_\mu(\lambda,s,\vx),\\
\e Y(t)&=\bF_\mu^{s,t}(\lambda,v_\mu,\vx).
\end{align*}
The use of It\^{o}'s formula and \eqref{eq-9} implies
\begin{align*}
d\Psi_\mu&=\partial_s\Psi_\mu dw+\left<\triangledown \Psi_\mu,d\vX\right>+\frac{1}{2}\sum_{i,j=1}^2\partial_{x_ix_j}\Psi_\mu d\left<X_i,X_j\right>\\
&=-\frac{1}{2}\left(\left<T,\triangledown^2\Psi_\mu\right>+\alpha_\mu \left<T\triangledown \Psi_\mu,\triangledown \Psi_\mu\right>\right)dw\\
&+\alpha_\mu\zeta \left<T\triangledown\Psi_\mu,\triangledown\Psi_\mu\right>dw+T^{1/2}\left<\triangledown \Psi_\mu,d\mathcal{B}\right>+\frac{1}{2}\left<T,\triangledown^2\Psi_\mu\right>dw\\
&=\frac{1}{2}\alpha_\mu\left<T\triangledown\Psi_\mu,\triangledown\Psi_\mu\right>dw+T^{1/2}\left<\triangledown \Psi_\mu,d\mathcal{B}\right>
\end{align*}
and thus, $dY=0$, which means that $\bF_\mu^{s,t}(\lambda,v_\mu,\vx)=\e Y(t)=\e Y(s)=\Psi_\mu(\lambda,s,\vx)$. This finishes our proof. 
\end{proof}

\begin{proof}[\bf Proof of Theorem \ref{sec1.2:thm1}]
Let $\mu,\mu'\in\mathcal{M}_d$. Since
\begin{align*}
\Psi_{\mu}(\lambda,1,\vx)&=\Psi_{\mu'}(\lambda,1,\vx)=\log(\cosh x_1\cosh x_2\cosh \lambda+\sinh x_1\sinh x_2\sinh\lambda),
\end{align*}
the mean value theorem implies
$$
|\Psi_{\mu}(\lambda,1,\vx)-\Psi_{\mu'}(\lambda,1,\vx')|\leq |\vx-\vx'|
$$
for $\lambda\in\mathbb{R}$ and $\vx,\vx'\in\mathbb{R}^2$. Therefore, for any $v\in\mathcal{D}[s,1],$
\begin{align*}
\left|\mathcal{C}_{\mu}^{s,1}(\lambda,v,\vx)-\mathcal{C}_{\mu'}^{s,1}(\lambda,v,\vx)\right|
&\leq\int_0^1|\alpha_{\mu}(w)-\alpha_{\mu'}(w)||T(w)v(w)|dw\\
&\leq \sqrt{2}Kd(\mu,\mu'),
\end{align*}
where the last inequality used $\|T(w)\|\leq K$ and $|v(w)|\leq \sqrt{2}.$
Also, we know that
\begin{align*}
|\mathcal{L}_\mu^{s,1}(v)-\mathcal{L}_{\mu'}^{s,1}(v)|&\leq Kd(\mu,\mu').
\end{align*}
Combining these two inequalities together leads to
\begin{align*}
\left|\mathcal{F}_{\mu}^{s,1}(\lambda,v,\vx)-\mathcal{F}_{\mu'}^{s,1}(\lambda,v,\vx)\right|&\leq 3Kd(\mu,\mu')
\end{align*}
and hence the announced inequality by applying \eqref{thm2:eq2}.
\end{proof}

\begin{proof}[\bf Proof of Theorem \ref{thm2} for arbitrary $\boldsymbol{\mu}$] This part of the proof relies on a standard approximation by using a sequence of atomic $\{\mu_n\}_{n\geq 1}$ with weak limit $\mu.$ Just like the facts that $\partial_{x^i}\Phi_{\mu}$ is uniformly bounded by $1$ and $\lim_{n\rightarrow\infty}\partial_{x^i}\Phi_{\mu_n}=\partial_{x^i}\Phi_{\mu}$ uniformly for $i=1,2$, one may imitate the same approach as the appendix in \cite{AC13} to show that $|\partial_{x_i}\Psi_{\mu}|\leq 1$, $\|\triangledown^2\Psi_{\mu}\|\leq C$ and $\lim_{n\rightarrow\infty}\triangledown^i\Psi_{\mu_n}=\triangledown^i\Psi_{\mu}$ uniformly for $i=1,2.$ These give the existence of the SDE \eqref{thm2:eq1} and will lead to $(i)$ and $(ii)$ by using the results for atomic measures we established above and the same argument as in the proof of \cite[Theorem 3]{AC14}. As the details are quite routine and follow exactly in the same lines, we will not reproduce them here.
\end{proof}

\begin{proof}[\bf Proof of Theorem \ref{thm-1}]
By the virtue of the Lipschitz property of $\mu\mapsto\Psi_\mu$ with respect to the metric $d$ defined by \eqref{metric}, it suffices to justify \eqref{thm-1:eq1} for atomic $\mu$ with jumps at $\{q_p\}_{p=1}^{k}$, where $q_p<q_{p+1}$ for all $1\leq p\leq k-1.$ Let $q_0=0$ and $q_{k+1}=1.$ Without loss of generality, we may also assume that the non-differentiable points of $\rho_{\ell,\ell'}$ are all at $\{q_p\}_{p=1}^{k}.$ Set 
\begin{align*}
\rho_p^{1,1}&=\rho_{1,1}(q_p),\,\,\rho_p^{2,2}=\rho_{2,2}(q_p),\,\,
\rho_p^{1,2}=\iota\rho_{1,2}(q_p),\,\,\rho_p^{2,1}=\iota\rho_{2,1}(q_p)
\end{align*}
for $0\leq p\leq k+1.$ Note that \eqref{eq-2} follows from \eqref{eq--1}. Since 
\begin{align*}
\xi_{\ell,\ell'}'(\rho_{p+1}^{\ell,\ell'})-\xi_{\ell,\ell'}'(\rho_p^{\ell,\ell'})
&=\int_{q_p}^{q_{p+1}}\zeta_{\ell,\ell'}(s)ds,
\end{align*}
the assumption $T(s)\geq 0$ implies that
\begin{align*}
\left<\left[
\begin{array}{cc}
\xi_{1,1}'(\rho_{p+1}^{1,1})-\xi_{1,1}'(\rho_p^{1,1})&\xi_{1,2}'(\rho_{p+1}^{1,2})-\xi_{1,2}'(\rho_p^{1,2})\\
\xi_{2,2}'(\rho_{p+1}^{2,2})-\xi_{2,2}'(\rho_p^{2,2})&\xi_{2,1}'(\rho_{p+1}^{2,1})-\xi_{2,1}'(\rho_p^{2,1})
\end{array}
\right]\vx,\vx\right>
&=\int_{q_p}^{q_{p+1}}\left<T(s)\vx,\vx\right>ds\geq 0
\end{align*}
for all $\vx\in\mathbb{R}^2$. So the matrix on the right-hand side is positive semi-definite, which ensures that we can construct Gaussian random vectors $(y_p^{\ell},y_p^{\ell'})$ with mean zero and covariance
$$
\e y_p^{\ell}y_p^{\ell'}=\xi_{\ell,\ell'}'(\rho_{p+1}^{\ell,\ell'})-\xi_{\ell,\ell'}'(\rho_{p}^{\ell,\ell'}).
$$
Now we apply Theorem \ref{thm0} with the choice $m_p=\mu([0,q_{p}])$ for $0\leq p\leq k$ to get \eqref{thm0:eq1} as follows. Recall the definition of $Y_{p}$ for $0\leq p\leq k+1$ from Theorem \ref{thm0}. Define
\begin{align*}
(Z_p^1,Z_p^2)&=\bigg(h_1+\sum_{j=0}^{p-1}y_j^1,h_2+\sum_{j=0}^{p-1}y_j^2\bigg),\,\,\forall 1\leq p\leq k+1,\\
(Z_0^1,Z_0^2)&=(h_1,h_2).
\end{align*}
Observe that $Y_{k+1}=\Psi_{\mu}(\lambda,1,Z_{k+1}^1,Z_{k+1}^2)$. If $Y_{p+1}=\Psi_{\mu}(\lambda,q_{p+1},Z_{p+1}^1,Z_{p+1}^2)$ for some $0\leq p\leq k$, then Lemma \ref{lem3} yields
\begin{align*}
Y_p&=\frac{1}{m_p}\log \e_p\exp m_pY_{p+1}\\
&=\frac{1}{m_p}\log \e_p\exp m_p\Psi_{\mu}(\lambda,q_{p+1},Z_{p}^1+y_{p}^1,Z_p^2+y_p^2)\\
&=\Psi_{\mu}(\lambda,q_p,Z_p^1,Z_p^2)
\end{align*}
and so $Y_0=\Phi_{\mu}(\lambda,0,h_1,h_2).$ On the other hand, since  $\theta_{\ell,\ell'}'(w)=w\xi_{\ell,\ell'}''(w)$, we have that for $\ell=\ell',$
\begin{align*}
\theta_{\ell,\ell'}(\rho_{p+1}^{\ell,\ell'})-\theta_{\ell,\ell'}(\rho_{p}^{\ell,\ell'})
&=\theta_{\ell,\ell'}(\rho_{\ell,\ell'}(q_{p+1}))-\theta_{\ell,\ell'}(\rho_{\ell,\ell'}(q_p))\\
&=\int_{q_p}^{q_{p+1}}\rho_{\ell,\ell'}'(s)\theta_{\ell,\ell'}'(\rho_{\ell,\ell'}(s))ds=\int_{q_p}^{q_{p+1}}\rho_{\ell,\ell'}(s)\zeta_{\ell,\ell'}(s)ds
\end{align*}
and for $\ell\neq \ell,$
\begin{align*}
\theta_{\ell,\ell'}(\rho_{p+1}^{\ell,\ell'})-\theta_{\ell,\ell'}(\rho_{p}^{\ell,\ell'})
&=\theta_{\ell,\ell'}(\iota\rho_{\ell,\ell'}(q_{p+1}))-\theta_{\ell,\ell'}(\iota\rho_{\ell,\ell'}(q_p))\\
&=\int_{q_p}^{q_{p+1}}\iota\rho_{\ell,\ell'}'(s)\theta_{\ell,\ell'}'(\iota\rho_{\ell,\ell'}(s))ds=\int_{q_p}^{q_{p+1}}\iota\rho_{\ell,\ell'}(s)\zeta_{\ell,\ell'}(s)ds.
\end{align*}
Consequently, 
$$
\sum_{1\leq \ell,\ell'\leq 2}\sum_{p=0}^km_p(\theta_{\ell,\ell'}(\rho_{p+1}^{\ell,\ell'})-\theta_{\ell,\ell'}(\rho_{p}^{\ell,\ell'}))=\int_0^1\alpha_{\mu}(s)
\Biggl(\sum_{\ell=\ell'}\rho_{\ell,\ell'}(s)\zeta_{\ell,\ell'}(s)+\iota\sum_{\ell\neq\ell'}\rho_{\ell,\ell'}(s)\zeta_{\ell,\ell'}(s)\Biggr)ds.
$$
Putting all these together into \eqref{thm0:eq1}, we obtain \eqref{thm-1:eq1}.
\end{proof}

\section{The control of the GT bound}\label{sec5}
This section is devoted to proving Theorems \ref{pp} and \ref{dc} in Subsections \ref{subpo} and \ref{subcd}. We assume throughout this section that $X_N^1$ and $X_N^2$ are jointly Gaussian processes with mean zero and covariance,
\begin{align*}
\e X_N^1(\vsi^1)X_N^1(\vsi^2)&=N\xi(R_{1,2}),\\
\e X_N^2(\vsi^1)X_N^2(\vsi^2)&=N\xi(R_{1,2}),\\
\e X_N^1(\vsi^1)X_N^2(\vsi^2)&=N\xi_0(R_{1,2}),
\end{align*}
where $\xi$ and $\xi_0$ are of the form \eqref{eq-4}. Furthermore, we assume that they are convex on $[-1,1]$ and are not identically equal to zero such that 
\begin{align}
\label{cond}
\xi_0''(s)\leq \xi''(|s|),\,\,\forall s\in[-1,1].
\end{align} 
Let $h\in\mathbb{R}$. Consider two mixed $p$-spin models, 
\begin{align*}
H_N^1(\vsi^1)=X_N^1(\vsi^1)+h\sum_{i=1}^N\sigma_i^1\,\,\mbox{and}\,\,H_N^2(\vsi^2)=X_N^2(\vsi^2)+h\sum_{i=1}^N\sigma_i^2.
\end{align*}
Clearly they share the same Parisi measure $\mu_P$. Denote by $\eta$ the minimum of the support of $\mu_P.$ Recall the formulation of the two-dimensional GT bound from \eqref{thm-1:eq1}. For fixed $q\in S_N$, set
\begin{align*}
\rho_{1,1}(s)&=\rho_{2,2}(s)=s,\\
\rho_{1,2}(s)&=\rho_{2,1}(s)=\min(|q|,s)
\end{align*}
for $s\in[0,1]$. From \eqref{eq-10}, it follows that 
\begin{align}\label{eq-12}
T(s)=\left[
\begin{array}{cc}
\xi''(s)&\iota\xi_0''(\iota s)\\
\iota\xi_0''(\iota s)&\xi''(s)
\end{array}
\right],\,\,\forall s\in [0,|q|)\,\,\mbox{and}\,\,
T(s)=\left[
\begin{array}{cc}
\xi''(s)&0\\
0&\xi''(s)
\end{array}\right],\,\,\forall s\in[|q|,1].
\end{align}
Consequently, from the condition \eqref{cond}, one sees that $T\geq 0$ on $[0,|q|);$ also it is clear that $T\geq 0$ on $[|q|,1].$
These allow us to apply Theorem \ref{thm-1} with arbitrary $\mu\in\mathcal{M}$ to get
\begin{align}
\label{eq-11}
F_N(q)&\leq 2\log 2+\Psi_{\mu}(\lambda,0,h,h)-\lambda q-\left(\int_0^1\alpha_{\mu}(s)s\xi''(s)ds+\int_0^{|q|}\alpha_{\mu}(s)s\xi_0''(\iota s)ds\right).
\end{align}
Note that the right-hand side of this inequality is indeed well-defined for all $q\in[-1,1]$. We denote this extension by $\Lambda_\mu(\lambda,q)$ and set $\Lambda(q)=\inf_{\lambda\in\mathbb{R},\mu\in\mathcal{M}}\Lambda_\mu(\lambda,q)$. In the following two subsections, we will control $\Lambda(q)$ using the GT bound in two disjoint regions: $[-\eta,\eta]$ and $[-1,-\eta)\cup(\eta,1]$.

\subsection{Behavior of $\Lambda$ in $[-\eta,\eta]$}

The main result in this subsection is Proposition \ref{prop0} below. This part of the argument appeared before in \cite{Chen14} and \cite[Chapter 14]{Tal11}. For completeness, we will give the detailed proof in the terminology of the variational representation \eqref{thm-2:eq1} and \eqref{thm2:eq2}. Recall that $\eta$ satisfies \eqref{eta}.

\begin{proposition}
\label{prop0} 
If $h\neq 0$, then there exists some $q^*\in (0,\eta]$ such that $$
\Lambda(q)<2\mathcal{P}(\mu_P)$$ 
for any $q\in[-\eta,\eta]\setminus\{q^*\}$. Here, $q^*=\eta$ if $\xi=\xi_0$ and $q^*<\eta$ if $\xi\neq \xi_0.$
\end{proposition}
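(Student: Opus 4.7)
The plan is to specialize the GT inequality \eqref{eq-11} to $\mu=\mu_P$. Since $|q|\leq \eta$ and $\alpha_{\mu_P}\equiv 0$ on $[0,\eta)$, the integral $\int_0^{|q|}\alpha_{\mu_P}(s)s\xi_0''(\iota s)\,ds$ vanishes identically, so that
\[
\Lambda_{\mu_P}(\lambda,q)-2\mathcal{P}(\mu_P)=\Psi_{\mu_P}(\lambda,0,h,h)-2\Phi_{\mu_P}(0,h)-\lambda q=:G(\lambda,q).
\]
It therefore suffices to exhibit, for each $q\in[-\eta,\eta]\setminus\{q^*\}$, a value of $\lambda$ making $G(\lambda,q)<0$, and to single out the distinguished $q^*\in(0,\eta]$.

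First I would verify that $G(0,q)=0$ for every $q\in[-\eta,\eta]$. On the diagonal region $s\in[|q|,1]$ the matrix $T(s)$ is diagonal by \eqref{eq-12}, and a direct substitution shows that the tensor-product ansatz $\Psi_{\mu_P}(0,s,x_1,x_2)=\Phi_{\mu_P}(s,x_1)+\Phi_{\mu_P}(s,x_2)$ solves \eqref{eq-9} with the correct terminal data at $\lambda=0$. Propagating this backward through the pure Gaussian step on $[0,|q|]$ (where $\alpha_{\mu_P}=0$) and then through the one-dimensional Parisi PDE on $[0,\eta]$ yields $\Psi_{\mu_P}(0,0,h,h)=2\Phi_{\mu_P}(0,h)$, hence $G(0,q)=0$.

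Next I would establish that $\lambda\mapsto\Psi_{\mu_P}(\lambda,0,h,h)$ is convex. Rewriting the terminal condition as $\log\tfrac12(e^{\lambda}\cosh(x_1+x_2)+e^{-\lambda}\cosh(x_1-x_2))$ shows it is convex (indeed log-convex) in $\lambda$. Using the Hopf--Cole representation of Lemma \ref{lem3}, convexity is preserved inductively across intervals $[q_p,q_{p+1}]$: log-convexity persists under $\mathbb{E}\exp m_p(\cdot)$, and under the linear Gaussian smoothing that occurs where $\alpha_\mu=0$; the Lipschitz estimate of Theorem \ref{sec1.2:thm1} then extends this to arbitrary $\mu\in\mathcal{M}$. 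Consequently $G(\lambda,q)$ is convex in $\lambda$ with $G(0,q)=0$, so by elementary Legendre duality $\min_\lambda G(\lambda,q)\leq 0$, with strict inequality exactly when $\partial_\lambda G(0,q)\neq 0$. Differentiating the 2D Parisi PDE along $\lambda$ and using the martingale argument of Theorem \ref{thm2} (combined with the independence of the driving Brownian motions after time $|q|$), I would identify
\[
\partial_\lambda G(0,q)=F(q)-q,\qquad F(q):=\mathbb{E}\bigl[\partial_x\Phi_{\mu_P}(|q|,h+Y_1)\,\partial_x\Phi_{\mu_P}(|q|,h+Y_2)\bigr],
\]
where $(Y_1,Y_2)$ is centered Gaussian with $\mathrm{Var}(Y_i)=\xi'(|q|)$ and $\mathrm{Cov}(Y_1,Y_2)=\xi_0'(q)$.

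It remains to locate the unique zero $q^*$ of $F(q)-q$ on $[-\eta,\eta]$. When $\xi=\xi_0$ and $q\geq 0$, the covariance saturates the variance, so $Y_1=Y_2$ a.s.\ and $F(q)=\mathbb{E}\,u_{\mu_P}(q)^2$; the identity $\mathbb{E}\,u_{\mu_P}(\eta)^2=\eta$ from Proposition \ref{thm1} pinpoints $q^*=\eta$, while the martingale property of $u_{\mu_P}$ together with the bound $\xi''(q)\mathbb{E}\partial_{xx}\Phi_{\mu_P}(q,X(q))^2\leq 1$ (proved for $q\in S$ and extended by continuity to a neighborhood) rules out additional zeros on $[0,\eta)$. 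On $[-\eta,0)$, a Cauchy--Schwarz bound gives $|F(q)|\leq\mathbb{E}u_{\mu_P}(|q|)^2<\eta$, and combined with $q<0$ and the explicit sign of $F(q)$ (computed from the decomposition of $\psi_{|q|}(h+Y)$ into even/odd parts in $Y$), one checks $F(q)\neq q$. When $\xi\neq\xi_0$, Cauchy--Schwarz yields $F(\eta)<\eta$ strictly (because $\xi_0'(\eta)<\xi'(\eta)$), while $F(0)=\partial_x\Phi_{\mu_P}(0,h)^2>0$ for $h\neq 0$, so IVT produces $q^*\in(0,\eta)$; uniqueness follows from $F'(q)\leq\xi_0''(q)\mathbb{E}\partial_{xx}\Phi_{\mu_P}(q,X(q))^2<\xi''(q)\mathbb{E}\partial_{xx}\Phi_{\mu_P}(q,X(q))^2\leq 1$ on $[0,\eta]$, so $q\mapsto q-F(q)$ is strictly increasing. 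For $q\in[-\eta,\eta]\setminus\{q^*\}$ we have $\partial_\lambda G(0,q)\neq 0$ and strict convexity then gives $\min_\lambda G(\lambda,q)<0$, so $\Lambda(q)<2\mathcal{P}(\mu_P)$. The main technical obstacle is Step~4: establishing that $F(q)-q$ has a \emph{unique} zero on $[-\eta,\eta]$ (especially the delicate sign analysis on the negative side), which requires pushing the inequalities of Proposition \ref{thm1} slightly off the support of $\mu_P$.
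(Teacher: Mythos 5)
Your high-level strategy is exactly the paper's: plug $\mu=\mu_P$ into the GT bound, observe that the $\theta$-correction terms involving $\alpha_{\mu_P}$ on $[0,|q|)$ vanish, show $\Lambda_{\mu_P}(0,q)=2\mathcal{P}(\mu_P)$, compute $\partial_\lambda\Lambda_{\mu_P}(0,q)$ as a fixed-point discrepancy, and conclude by perturbing $\lambda$. (The convexity-in-$\lambda$ detour you take is not needed: differentiability and $\partial_\lambda\Lambda_{\mu_P}(0,q)\neq 0$ already give $\Lambda(q)<2\mathcal{P}(\mu_P)$, which is what the paper does in Step III. Also your claim that strict inequality holds \emph{exactly} when the $\lambda$-derivative is nonzero is too strong, though harmless here.)

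The real gap is the one you flag yourself: uniqueness of the zero of $F(q)-q$ on $[-\eta,\eta]$. Two things go wrong with the version you set up. First, you parametrize through time $|q|$, i.e.\ $F(q)=\e\bigl[\partial_x\Phi_{\mu_P}(|q|,h+Y_1)\partial_x\Phi_{\mu_P}(|q|,h+Y_2)\bigr]$ with $\mathrm{Var}(Y_i)=\xi'(|q|)$. Then $q$ enters $F$ in three places (the covariance $\xi_0'(q)$, the variance $\xi'(|q|)$, and the function $\partial_x\Phi_{\mu_P}(|q|,\cdot)$ itself), so the formula $F'(q)=\xi_0''(q)\,\e\partial_{xx}\Phi_{\mu_P}(q,X(q))^2$ is missing terms. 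Second, even if the cross terms happened to cancel, the bound $\xi''(q)\e\partial_{xx}\Phi_{\mu_P}(q,X(q))^2\leq 1$ from Proposition \ref{thm1} is only available for $q$ in the support $S$ of $\mu_P$; ``extending by continuity to a neighborhood'' does not reach all of $[0,\eta)$, since $\eta=\min S$ may be an isolated point of $S$ below which the inequality fails. The paper avoids both problems by pushing the evaluation point up to $\eta$: define $f(q)=\e\bigl[\partial_x\Phi_{\mu_P}(\eta,h+z_1(q))\partial_x\Phi_{\mu_P}(\eta,h+z_2(q))\bigr]$ with $\mathrm{Var}(z_i)=\xi'(\eta)$ fixed and only $\mathrm{Cov}(z_1,z_2)=\xi_0'(q)$ varying. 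Then Gaussian integration by parts cleanly gives $f'(q)=\xi_0''(q)\,\e\partial_{xx}\Phi_{\mu_P}(\eta,h+z_1(q))\partial_{xx}\Phi_{\mu_P}(\eta,h+z_2(q))$, Cauchy--Schwarz bounds this by $\xi_0''(q)\,\e\partial_{xx}\Phi_{\mu_P}(\eta,X(\eta))^2\leq\xi_0''(q)/\xi''(\eta)$ (using \eqref{eq7} at $\eta\in S$), and then $\xi_0''(q)\leq\xi''(|q|)<\xi''(\eta)$ on $(-\eta,\eta)$ forces $f'<1$. Hence $q\mapsto f(q)-q$ is strictly decreasing on $[-\eta,\eta]$, has a unique zero, and since $f(0)>0$ when $h\neq 0$ that zero lies in $(0,\eta]$ — which also handles your ``delicate sign analysis on the negative side'' with no extra work. (One checks $f=F$ by conditioning on the independent Gaussian increments between $|q|$ and $\eta$, so the identification $\partial_\lambda\Lambda_{\mu_P}(0,q)=f(q)-q$ is unchanged.) With this substitution your argument closes; as written it does not.
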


The proof of this proposition relies on the following technical lemma.

\begin{lemma}
\label{lem2} Let $s\in [|q|,1]$ and $\vx=(x_1,x_2)\in\mathbb{R}.$ If $\mu,\mu'\in\mathcal{M}$ satisfies $\mu=\mu'$ on $[|q|,1]$, then
\begin{align}
\begin{split}\label{eq-14}
\Psi_{\mu}(0,s,\vx)&=\Phi_{\mu'}(s,x_1)+\Phi_{\mu'}(s,x_2),
\end{split}\\
\begin{split}
\label{eq-15}
\partial_\lambda\Psi_\mu(0,s,\vx)&=\partial_{x}\Phi_{\mu'}(s,x_1)\partial_x\Phi_{\mu'}(s,x_2).
\end{split}
\end{align}
\end{lemma}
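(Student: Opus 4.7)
\textbf{Proof proposal for Lemma \ref{lem2}.} The key structural observation is that on the interval $[|q|,1]$ the cross-couplings vanish in $T$: by the choice $\rho_{1,2}(s)=\rho_{2,1}(s)=\min(|q|,s)$ that was fixed in \eqref{eq-12}, we have $T(s)=\xi''(s)I_2$ for $s\in[|q|,1]$. Hence on this time window the two-dimensional Parisi PDE \eqref{eq-9} reduces to
\begin{align*}
\partial_s\Psi_\mu=-\frac{\xi''(s)}{2}\bigl(\Delta\Psi_\mu+\alpha_\mu(s)|\triangledown\Psi_\mu|^2\bigr),
\end{align*}
which has exactly the shape of the one-dimensional Parisi PDE \eqref{PDE} applied coordinate-wise. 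Moreover, at $\lambda=0$ the terminal condition \eqref{tc} simplifies to $\Psi_\mu(0,1,\vx)=\log\cosh x_1+\log\cosh x_2$, which is additively separable and equals $\Phi_{\mu'}(1,x_1)+\Phi_{\mu'}(1,x_2)$.

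The plan for \eqref{eq-14} is therefore to verify that the ansatz $\tilde\Psi(s,\vx):=\Phi_{\mu'}(s,x_1)+\Phi_{\mu'}(s,x_2)$ solves the reduced PDE on $[|q|,1]\times\mathbb{R}^2$ and then to invoke uniqueness. Substituting the ansatz, each coordinate produces a copy of \eqref{PDE} for $\Phi_{\mu'}$ with coefficient $\alpha_{\mu'}(s)$, and because $\mu=\mu'$ on $[|q|,1]$ we have $\alpha_\mu(s)=\alpha_{\mu'}(s)$ on this range, so the two PDEs match. For \eqref{eq-15}, I would differentiate the PDE \eqref{eq-9} in $\lambda$, set $\lambda=0$, and obtain the linear transport-type equation
\begin{align*}
\partial_s W=-\frac{\xi''(s)}{2}\bigl(\Delta W+2\alpha_\mu(s)\la\triangledown\Psi_\mu(0,s,\cdot),\triangledown W\ra\bigr)
\end{align*}
for $W(s,\vx):=\partial_\lambda\Psi_\mu(0,s,\vx)$, with terminal value $W(1,\vx)=\tanh x_1\tanh x_2$ obtained from direct differentiation of \eqref{tc}. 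Plugging in the already-established formula for $\triangledown\Psi_\mu(0,s,\cdot)$ from \eqref{eq-14}, one checks by the product rule (using that $A(s,x):=\partial_x\Phi_{\mu'}(s,x)$ solves the $x$-differentiated Parisi equation $\partial_s A=-\frac{\xi''}{2}(\partial_{xx}A+2\alpha_{\mu'}A\,\partial_x A)$) that the candidate $W(s,\vx)=\partial_x\Phi_{\mu'}(s,x_1)\partial_x\Phi_{\mu'}(s,x_2)$ also solves this linear equation. Uniqueness for the linearized PDE then yields \eqref{eq-15}.

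The main obstacle is regularity: the PDE \eqref{eq-9} is stated as a classical equation only for atomic $\mu$, and for general $\mu\in\mathcal{M}$ the function $\Psi_\mu$ is defined as the continuous extension guaranteed by Theorem \ref{sec1.2:thm1}. I would therefore first carry out both computations above for $\mu\in\mathcal{M}_d$, where Lemma \ref{lem3} gives an explicit Hopf-Cole representation from which both the identity \eqref{eq-14} and the differentiability in $\lambda$ together with \eqref{eq-15} follow unambiguously. The general case is then obtained by approximating $\mu$ by atomic measures $\mu_n$ with $\mu_n=\mu'_n$ on $[|q|,1]$ (one can simply take $\mu_n=\mu'_n$ to be a common discretization) and passing to the limit; the left-hand sides converge by Theorem \ref{sec1.2:thm1}, while convergence of $\Phi_{\mu'_n}$ and $\partial_x\Phi_{\mu'_n}$ follows from the one-dimensional analogue proved in \cite{AC13,AC14}. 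The slightly delicate point is that taking $\lambda$-derivatives does not obviously commute with the limit $n\to\infty$; I would handle this by writing $\partial_\lambda\Psi_\mu(0,s,\vx)$ as the limit of difference quotients and using the stochastic representation of Theorem \ref{thm2} to control them uniformly in $n$, which reduces the question back to the convergence of $\triangledown\Psi_{\mu_n}$ already built into the proof of Theorem \ref{thm2} for general $\mu$.
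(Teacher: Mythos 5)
Your proposal is correct in spirit but takes a genuinely different route from the paper. You proceed by PDE verification: observe that $T(s)=\xi''(s)I_2$ on $[|q|,1]$, note that the two-dimensional Parisi PDE then separates coordinate-wise, plug in the additive ansatz $\tilde\Psi(s,\vx)=\Phi_{\mu'}(s,x_1)+\Phi_{\mu'}(s,x_2)$, and invoke uniqueness; for the $\lambda$-derivative you linearize the PDE and verify the product ansatz $\partial_x\Phi_{\mu'}(s,x_1)\partial_x\Phi_{\mu'}(s,x_2)$ satisfies the resulting transport equation. The paper instead works entirely through the stochastic control representations (Theorems \ref{thm-2} and \ref{thm2}), which are already established for arbitrary $\mu\in\mathcal{M}$: the diagonality of $T$ makes the functional $\mathcal{F}_\mu^{s,1}(0,v,\vx)$ split additively into two copies of $F_{\mu'}^{s,1}$, and the maxima therefore decouple, giving \eqref{eq-14}; for \eqref{eq-15} it uses Lemma \ref{lem1} to differentiate through the max, evaluates $\partial_\lambda\bC_\mu^{s,1}$ at $\lambda=0$ to get $\e[\partial_x\Phi_{\mu'}(1,X_1(1))\partial_x\Phi_{\mu'}(1,X_2(1))]$, then exploits that $X_1,X_2$ are driven by \emph{independent} Brownian motions (because $T^{1/2}$ is diagonal) so the expectation factorizes, and closes with the martingale identity $\e\partial_x\Phi_{\mu'}(1,X_i(1))=\partial_x\Phi_{\mu'}(s,x_i)$. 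The paper's route buys you automatic applicability to non-atomic $\mu$ and avoids having to state or prove a uniqueness theorem for the linearized PDE; your route is more elementary and self-contained on $\mathcal{M}_d$, but requires extra work for general $\mu$, and you correctly flag the sticking point (interchanging $\partial_\lambda$ with the limit in $n$) without fully resolving it. One detail worth making explicit if you pursue your version: uniqueness for the linearized second-order parabolic equation with coefficients built from $\triangledown\Psi_\mu(0,\cdot,\cdot)$ needs justification — it is fine because $|\partial_x\Phi_{\mu'}|\leq 1$, so the coefficients are bounded and a Feynman--Kac or maximum-principle argument applies, but it should not be left implicit.
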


\begin{proof} For any $|q|\leq s\leq 1$ and $v=(v_1,v_2)\in\mathcal{D}[s,1]$, we write by \eqref{eq-12},
\begin{align}
\begin{split}\label{lem2:proof:eq1}
&\vx+\int_{s}^1\alpha_\mu(r)T(r)v(r)dr+\int_{s}^1T(r)^{1/2}d\mathcal{B}(r)\\
&=\left(x_1+\int_{s}^1\alpha_{\mu'}(r)\xi''(r)v_1(r)dr+\int_{s}^1\xi''(r)^{1/2}d\mathcal{B}_1(r),\right.\\
&\qquad\left.x_2+\int_{s}^1\alpha_{\mu'}(r)\xi''(r)v_2(r)dr+\int_{s}^1\xi''(r)^{1/2}d\mathcal{B}_2(r)\right)
\end{split}
\end{align}
and
\begin{align}\label{lem2:proof:eq2}
\int_{s}^1\alpha_\mu(r)\left<T(r)v(r),v(r)\right>dr&=\int_{s}^1\alpha_{\mu'}(r)\xi''(r)v_1(r)^2dr+\int_{s}^1\alpha_{\mu'}(r)\xi''(r)v_2(r)^2dr.
\end{align}
From the terminal condition of $\Psi_{\mu}$ at \eqref{tc},
\begin{align}
\begin{split}\label{lem2:proof:eq3}
\Psi_{\mu}(0,1,\vx)&=\log\cosh x_1+\log\cosh x_2=\Phi_{\mu'}(1,x_1)+\Phi_{\mu'}(1,x_2),
\end{split}\\
\begin{split}\label{lem2:proof:eq4}
\partial_\lambda\Psi_{\mu}(0,1,\vx)&=\tanh x_1\cdot\tanh x_2=\partial_{x}\Phi_{\mu'}(1,x_1)\cdot\partial_x\Phi_{\mu'}(1,x_2).
\end{split}
\end{align}
Using \eqref{thm-2:eq1} and \eqref{thm2:eq2}, the equations \eqref{lem2:proof:eq1}, \eqref{lem2:proof:eq2} and \eqref{lem2:proof:eq3} yield \eqref{eq-14} since
\begin{align*}
\Psi_{\mu}(0,s,\vx)
&=\max_{v=(v_1,v_2)\in\mathcal{D}[{s},1]}\mathcal{F}_{\mu}^{{s},1}(0,v,\vx)\\
&=\max_{v_1\in D[{s},1]}F_{\mu'}^{{s},1}(v_1,x_1)+\max_{v_2\in D[{s},1]}F_{\mu'}^{{s},1}(v_2,x_2)\\
&=\Phi_{\mu'}({s},x_1)+\Phi_{\mu'}({s},x_2).
\end{align*}
To show \eqref{eq-15}, let $v_\mu(r)=\triangledown\Psi_\mu(0,r,\vX(r))$ be the maximizer for $\Psi_{\mu}(0,s,\vx)$, where $\vX(r)=(X_1(r),X_2(r))$ follows \eqref{thm2:eq1}. The key observation is that the use of \eqref{eq-14} leads to
\begin{align*}
X_i(r)&=x_i+\int_{s}^r\alpha_{\mu'}(w)\xi''(w)\partial_x\Phi_{\mu'}(w,X_i(w))dw+\int_{s}^r\xi''(w)^{1/2}d\mathcal{B}_i(w)
\end{align*}
for $i=1,2.$ Therefore, $\Phi_{\mu'}(s,X_i(s))$ is the maximizer of \eqref{thm-2:eq1} and $\partial_x\Phi_{\mu'}(s,x_i)=\e\partial_x\Phi_{\mu'}(1,X_i(1))$ from Lemma \ref{lem1}. 
Using these and Lemma \ref{lem1} together with \eqref{lem2:proof:eq1}, \eqref{lem2:proof:eq2} and \eqref{lem2:proof:eq4}, we obtain \eqref{eq-15} since
\begin{align*}
\partial_\lambda \Psi_{\mu}(0,s,\vx)&=\partial_\lambda\mathcal{F}_\mu^{s,1}(0,v_\mu,\vx)\\
&=\e\partial_x\Phi_{\mu'}(1,X_1(1))\partial_x\Phi_{\mu'}(1,X_2(1))\\
&=\e\partial_x\Phi_{\mu'}(1,X_1(1))\cdot \e\partial_x\Phi_{\mu'}(1,X_2(1))\\
&=\partial_x\Phi_{\mu'}(s,x_1)\partial_x\Phi_{\mu'}(s,x_2).
\end{align*}
\end{proof}

\begin{proof}[\bf Proof of Proposition \ref{prop0}] 
	Assume $h\neq 0.$ This proof has three major steps:
	
    \noindent{\bf Step I.}
	Define
	$$
	f(q)=\e\partial_x\Phi_{\mu_P}(\eta,h+z_1(q))\partial_x\Phi_{\mu_P}(\eta,h+z_2(q))
	$$
	for $q\in[-\eta,\eta]$, where $z_1(q)$ and $z_2(q)$ are jointly Gaussian with mean zero and covariance $\e z_1(q)^2=\xi'(\eta)=\e z_2(q)^2$ and $\e z_1(q)z_2(q)=\xi_0'(q).$
	We claim that $f$ maps $[-\eta,\eta]$ into itself and has a unique fixed point $q^*\in (0,\eta]$. Moreover, $q^*=\eta$ if $\xi=\xi_0$ and $q^*<\eta$ if $\xi\neq \xi_0$. To see these, recall from \eqref{thm1:eq1} and \eqref{thm1:eq2},
	\begin{align}
	\begin{split}
	\label{eq6}
	\e\partial_x\Phi_{\mu_P}\left(\eta,h+z_1(\eta)\right)^2&=\e\partial_x\Phi_{\mu_P}\left(\eta,h+z_2(\eta)\right)^2\\
	&=\e \partial_x\Phi_{\mu_P}\left(\eta,h+\int_0^{\eta}\zeta(r)^{1/2}dB(r)\right)^2=\eta
	\end{split}
	\end{align}
	and
	\begin{align}
	\begin{split}
	\label{eq7}
	\xi''(\eta)\e\partial_{xx}\Phi_{\mu_P}\left(\eta,h+z_1(\eta)\right)^2&=\xi''(\eta)\e\partial_{xx}\Phi_{\mu_P}\left(\eta,h+z_2(\eta)\right)^2\\
	&=
	\xi''(\eta)\e \partial_{xx}\Phi_{\mu_P}\left(\eta,h+\int_0^{\eta}\zeta(r)^{1/2}dB(r)\right)^2\leq 1.
	\end{split}
	\end{align}
	Using \eqref{eq6} and the Cauchy-Schwarz inequality, $f$ evidently maps $[-\eta,\eta]$ into itself, which implies the existence of a fixed point, say $q^*.$ To see its uniqueness, we apply the Gaussian integration by parts to obtain
	\begin{align*}
	f'(q)&=\xi_0''(q)\e \partial_{xx}\Phi_{\mu_P}(\eta,h+z_1(q))\partial_{xx}\Phi_{\mu_P}(\eta,h+z_2(q)).
	\end{align*}
	Since $\xi_0''(q)\leq \xi''(|q|)<\xi''(\eta)$ for $q\in(-\eta,\eta),$ using the Cauchy-Schwarz inequality and \eqref{eq7} to this formula leads to $f'<1$ on $(-\eta,\eta).$ So the fixed point $q^*$ is unique. Now since $\partial_x\Phi_{\mu_P}$ is odd and strictly increasing (see Lemma \ref{sec2:lem2}) and $h\neq 0$, one sees that $$
	f(0)=\bigl(\e\partial_x\Phi_{\mu_P}(\eta,h+z)\bigr)^2>0,
	$$
	where $z$ is Gaussian with mean zero and variance $\xi'(\eta).$ So $q^*\in (0,\eta].$ If $\xi=\xi_0$, \eqref{eq6} implies $q^*=\eta$; if $\xi\neq \xi_0$, then the Cauchy-Schwarz inequality and \eqref{eq6} leads to $q^*<\eta.$ This ends the proof of our claim.	

\noindent{\bf Step  II.} We check that
\begin{align}
\begin{split}\label{lem4:proof:eq2}
\Lambda_{\mu_P}(0,q)&=2\mathcal{P}(\mu_P),
\end{split}\\
\begin{split}\label{lem4:proof:eq3}
\partial_\lambda\Lambda_{\mu_P}(0,q)&=f(q)-q
\end{split}
\end{align}
for $|q|\leq \eta.$
Consider the variational representation \eqref{thm2:eq2} for $\Psi_{\mu_P}(\lambda,0,h,h)$ with $(s,t)=(0,\eta).$ Since $\alpha_{\mu_P}=0$ on $[0,\eta),$ 
\begin{align*}
\mathcal{F}_{\mu_P}^{0,\eta}(\lambda,v,h,h)&=\e\Psi_{\mu_P}\Bigl(\lambda, \eta,(h,h)+\int_0^{\eta}T(r)^{1/2}d\mathcal{B}(r)\Bigr),\,\,\forall v\in\mathcal{D}[0,\eta].
\end{align*}
Observe that from \eqref{eq-12}, $\int_0^{\eta}T(r)^{1/2}d\mathcal{B}(r)$ has the covariance structure
\begin{align*}
\int_0^{\eta}T(r)dr&=\int_0^{|q|}dr\left[\begin{array}{cc}
\xi''(r)&\iota\xi_0''(\iota r)\\
\iota\xi_0''(\iota r)&\xi''(r)
\end{array}\right]+\int_{|q|}^{\eta}dr
\left[\begin{array}{cc}
\xi''(r)&0\\
0&\xi''(r)
\end{array}\right]=\left[
\begin{array}{cc}
\xi'(\eta)&\xi'_0(q)\\
\xi_0'(q)&\xi'(\eta)
\end{array}\right].
\end{align*}
So we may as well write
\begin{align*}
\Psi_{\mu_P}(\lambda,0,h,h)&=\max_{v\in \mathcal{D}[0,\eta]}\mathcal{F}_{\mu_P}^{0,\eta}(\lambda,v,h,h)=\e \Psi_{\mu_P}\left(\lambda, \eta,h+z_1(q),h+z_2(q)\right),
\end{align*}
where $(z_1(q),z_2(q))$ is the Gaussian vector defined in  Step I.
Therefore, using \eqref{eq-14},
\begin{align*}
\Psi_{\mu_P}(0,0,h,h)&=\e \Psi_{\mu_P}\left(0, \eta,h+z_1(q),h+z_2(q)\right)\\
&=\e\Phi_{\mu_P}(\eta,h+z_1(q))+\e\Phi_{\eta_P}(\eta,h+z_2(q))\\
&=2\e\Phi_{\mu_P}(\eta,h+z)\\
&=2\Phi_{\mu_P}(0,h),
\end{align*}
where $z$ is a Gaussian random variable with mean zero and variance $\e z^2=\xi'(\eta)^2$ and the last equality used the assumption that $\alpha_{\mu}=0$ on $[0,|q|)$ and the variational representation \eqref{thm-2:eq1} for $\Phi_\mu(0,h)$ with $(s,t)=(0,\eta)$.
In addition, applying \eqref{eq-15},
\begin{align*}
\partial_\lambda \Psi_{\mu_P}(0,0,h,h)&=\e \partial_\lambda\Psi_{\mu_P}\left(0, \eta,h+z_1(q),h+z_2(q)\right)\\
&=\e\partial_x\Phi_{\mu_P}(0,h+z_1(q))\partial_x\Phi_{\mu_P}(0,h+z_2(q))\\
&=f(q).
\end{align*}
Using again $\alpha_{\mu_P}=0$ on $[0,|q|),$ we then obtain
\begin{align*}
\Lambda_{\mu_P}(0,q)&=2\log 2+\Psi_{\mu_P}(0,0,h,h)-\int_0^1\alpha_{\mu_P}(s)s\xi''(s)ds=2\mathcal{P}(\mu_P),\\
\partial_\lambda\Lambda_{\mu_P}(0,q)&=\partial_\lambda \Psi_{\mu_P}(0,0,h,h)-q=f(q)-q,
\end{align*}
which complete the verification of \eqref{lem4:proof:eq2} and \eqref{lem4:proof:eq3}. 

\noindent{\bf Step III.} From \eqref{lem4:proof:eq3} and Step I, we know $\partial_\lambda\Lambda_{\mu_P}(0,q)\neq 0$ for any $q\in [-\eta,\eta]\setminus\{q^*\}.$ Depending on the sign of this quantity, we may decrease or increase $\lambda$ slightly to obtain $\Lambda_{\mu_P}(\lambda,q)<\Lambda_{\mu_P}(0,q).$ As a result, $\Lambda(q)<2\mathcal{P}(\mu_P)$ by the definition of $\Lambda(q)$ and \eqref{lem4:proof:eq2}. This finishes our proof.

\end{proof}

\subsection{Behavior of $\Lambda$ outside of $[-\eta,\eta]$}

For notational convenience, we set $\zeta(s)=\xi''(s)$ and $\zeta_0(s)=\xi_0''(\iota s)$ for $s\in[0,1].$ Note that since $\xi$ and $\xi_0$ are convex and are not identically equal to zero, the function $\zeta$ is positive on $(0,1]$ and so is $\zeta_0$ if $\iota=1.$ In addition, $\zeta_0\geq 0$ on $(0,1]$ and $\zeta_0=0$ for at most a finite number of points if $\iota=-1.$ The following proposition takes care of the behavior of $\Lambda$ on $[-1,-\eta)\cup(\eta,1].$ 

\begin{proposition} 
\label{prop2} The following two statements hold.
\begin{itemize}
\item[$(i)$] For $-1\leq q<-\eta,$ if $\xi=\xi_0$ is even and $h\neq 0,$ then $\Lambda(q)<2\mathcal{P}(\mu_P)$.
\item[$(ii)$] For $|q|>\eta,$ if $\zeta_0<\zeta$ and $
\zeta/(\zeta+\zeta_0)
$
is nondecreasing on $(0,1],$ then $\Lambda(q)<2\mathcal{P}(\mu_P)$.
\end{itemize}
\end{proposition}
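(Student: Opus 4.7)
The plan, for both parts, is to take $\mu = \mu_P$ and argue that already at $\lambda = 0$ one has $\Lambda_{\mu_P}(0,q) \le 2\mathcal{P}(\mu_P)$, with strict inequality recovered by a small perturbation of $\lambda$ if needed. Using the explicit form $2\mathcal{P}(\mu_P) = 2\log 2 + 2\Phi_{\mu_P}(0,h) - \int_0^1 \alpha_{\mu_P}(s) s \xi''(s)\, ds$ and the definition of $\Lambda_{\mu_P}(0,q)$, the desired inequality at $\lambda = 0$ reduces to
\begin{align*}
\Psi_{\mu_P}(0,0,h,h) - 2\Phi_{\mu_P}(0,h) \le \int_0^{|q|} \alpha_{\mu_P}(s)\, s\, \xi_0''(\iota s)\, ds.
\end{align*}
This is an equality when $\xi_0 \equiv 0$, since then the matrix $T$ in \eqref{eq-12} is diagonal on all of $[0,1]$ and the argument of Lemma \ref{lem2} extends to give $\Psi_{\mu_P}(0,s,\vx) = \Phi_{\mu_P}(s,x_1) + \Phi_{\mu_P}(s,x_2)$ throughout. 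I therefore interpolate: for $t \in [0,1]$, set $T_t(s) = \xi''(s) I + t\,\iota \xi_0''(\iota s)\, E$ with $E = \bigl(\begin{smallmatrix}0 & 1\\ 1 & 0\end{smallmatrix}\bigr)$ on $[0,|q|)$ and $T_t(s) = \xi''(s) I$ on $[|q|,1]$, let $\Psi^t$ be the associated two-dimensional Parisi PDE solution, and set
\begin{align*}
G(t) := \Psi^t_{\mu_P}(0,0,h,h) - 2\Phi_{\mu_P}(0,h) - t \int_0^{|q|} \alpha_{\mu_P}(s)\, s\, \xi_0''(\iota s)\, ds,
\end{align*}
so that $G(0) = 0$ and $G(1) = \Lambda_{\mu_P}(0,q) - 2\mathcal{P}(\mu_P)$. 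The goal is $G(1) \le 0$ with strict inequality.

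Differentiating $\Psi^t$ in $t$ via Feynman-Kac applied to the linear PDE satisfied by $\partial_t \Psi^t$, using that the optimal trajectory $\vX^t$ from \eqref{thm2:eq1} with $\lambda = 0$ is the natural probabilistic representation, yields
\begin{align*}
\partial_t \Psi^t_{\mu_P}(0,0,h,h) = \e\int_0^{|q|} \iota\, \xi_0''(\iota w)\Bigl[\partial_{x_1 x_2}\Psi^t + \alpha_{\mu_P}(w)\,\partial_{x_1}\Psi^t\,\partial_{x_2}\Psi^t\Bigr](0,w,\vX^t(w))\, dw.
\end{align*}
Thus $G'(t)$ is governed by the pointwise comparison of this integrand with the weight $\alpha_{\mu_P}(w)\, w\, \xi_0''(\iota w)$. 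At $t = 0$, the PDE factorizes and $\vX^0$ has independent components, so $\partial_{x_1 x_2}\Psi^0 \equiv 0$ and $\e\partial_{x_1}\Psi^0\partial_{x_2}\Psi^0 = (\partial_x\Phi_{\mu_P}(0,h))^2$ by the martingale property of $u_{\mu_P}$ (Lemma \ref{sec2:lem2}); combined with \eqref{thm1:eq1}, this already forces $G'(0) < 0$ on $(\eta,|q|)$ where $\alpha_{\mu_P} > 0$. For part (i), since $\xi = \xi_0$ is even and $q < -\eta < 0$ one has $\iota\,\xi_0''(\iota w) = -\xi''(w)$, so $T_1$ has rank one with null direction $(1,1)$; the 2D SDE starting from $(h,h)$ conserves $X_1 + X_2 = 2h$, reducing the comparison up to $t = 1$ to a scalar problem that closes using \eqref{thm1:eq1}--\eqref{thm1:eq2} of Proposition \ref{thm1}, with $h \neq 0$ supplying the asymmetry required for strictness. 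For part (ii), the assumption $\zeta_0 < \zeta$ keeps $T_t$ strictly positive definite for all $t \in [0,1]$, and the monotonicity of $\zeta/(\zeta+\zeta_0)$ is precisely the condition that makes the pointwise comparison go in the correct direction: it allows the off-diagonal contribution $\iota \xi_0''(\iota w)\partial_{x_1 x_2}\Psi^t$ to be absorbed into the diagonal control $\zeta(w)\,\e\partial_{xx}\Phi_{\mu_P}(w,X(w))^2 \le 1$ on $\mathrm{supp}(\mu_P)$ supplied by \eqref{thm1:eq2}.

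The main obstacle is this pointwise comparison of integrands. The quantities $\partial_{x_i x_j}\Psi^t$ and $\partial_{x_i}\Psi^t$ are properties of the 2D PDE evaluated along the 2D trajectory, whereas Proposition \ref{thm1} controls only 1D objects. Bridging the gap requires a careful comparison argument tracking the second derivatives of $\Psi^t$ along \eqref{thm2:eq1}, using the explicit scalar reduction in part (i) and the monotonicity hypothesis on $\zeta/(\zeta+\zeta_0)$ in part (ii). Once $G(1) \le 0$ is established, strict inequality on the stated range follows from the strict sign of $G'$ seen above; in the borderline case where equality persists throughout the interpolation, a perturbation of $\lambda$ around $0$ completes the argument, using that $\partial_\lambda\Lambda_{\mu_P}(0,q)$ (the analog for $|q|>\eta$ of $f(q) - q$ from Step I of the proof of Proposition \ref{prop0}) is nonzero.
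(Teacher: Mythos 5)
Your interpolation scheme is a genuinely different route from the paper, and the overall skeleton ($G(0)=0$, compute $G'$ via Feynman--Kac, show $G(1)<0$) is a plausible design. However, there is a substantive gap that you yourself flag but do not close, and I think it is fatal rather than merely technical: the formula you derive for $\partial_t\Psi^t_{\mu_P}$ involves $\partial_{x_1x_2}\Psi^t$ and $\e\,\partial_{x_1}\Psi^t\partial_{x_2}\Psi^t$ evaluated along the $2$D optimal trajectory $\vX^t$, and to conclude $\int_0^1 G'(t)\,dt<0$ you must compare these $2$D quantities at \emph{every} $t\in(0,1)$ with the deterministic weight $\alpha_{\mu_P}(w)\,w$. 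Your computation of $G'(0)$ is fine (at $t=0$ the PDE factorizes, so $\partial_{x_1x_2}\Psi^0\equiv 0$ and the cross product becomes $(\partial_x\Phi_{\mu_P}(0,h))^2\le\eta\le w$ on $\mathrm{supp}\,\mu_P\cap[\eta,|q|]$), but a sign on $G'(0)$ alone gives nothing about $G(1)$. For $t>0$ the mixed second derivative $\partial_{x_1x_2}\Psi^t$ does not vanish and carries no a priori sign, and Proposition~\ref{thm1} supplies only $1$D information that does not transfer to the coupled problem. The sentence ``the monotonicity of $\zeta/(\zeta+\zeta_0)$ is precisely the condition that makes the pointwise comparison go in the correct direction'' is an assertion without a mechanism; nothing in the proposal explains how that monotonicity is actually used to bound $\partial_{x_1x_2}\Psi^t$ or $\e\,\partial_{x_1}\Psi^t\partial_{x_2}\Psi^t$. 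Similarly, the claimed rank-one/scalar reduction in part (i) holds only at the endpoint $t=1$: for $t<1$ the matrix $T_t$ is generically full rank and $X_1+X_2$ is \emph{not} conserved, so this reduction cannot be applied ``up to $t=1$'' inside the interpolation.

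For contrast, the paper's proof sidesteps the interpolation altogether by feeding a \emph{modified measure} $\mu\neq\mu_P$ into the GT bound, not $\mu_P$ itself. The measure is built in \eqref{lem6:eq1} by $\alpha_\mu=\alpha_{\mu_P}\zeta/(\zeta+\zeta_0)$ on $[0,|q|)$ and $\alpha_\mu=\alpha_{\mu_P}$ on $[|q|,1]$; the monotonicity of $\zeta/(\zeta+\zeta_0)$ enters exactly here, to make $\alpha_\mu$ a genuine distribution function, not to control any pointwise integrand. The punchline is algebraic: with this $\mu$ the two integral corrections in $\Lambda_\mu(0,q)$ recombine into $\int_0^1\alpha_{\mu_P}(s)s\xi''(s)\,ds$, so the target reduces cleanly to $\Psi_\mu(0,0,h,h)<2\Phi_{\mu_P}(0,h)$. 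That strict inequality is then proved by decomposing the $2$D variational representation of $\Psi_\mu$ into two $1$D problems for $\Phi_{\mu_P}$ minus an explicit nonnegative error, $\tfrac{1}{2}\int_0^{|q|}\frac{\alpha_{\mu_P}\zeta\zeta_0(\zeta-\zeta_0)}{(\zeta+\zeta_0)^2}\e(v_1-\iota v_2)^2\,dw$ (Proposition~\ref{lem6}); if the error vanishes, Lemma~\ref{lem00} forces $u_1=\iota u_2$ on $[\eta,|q|]$, which contradicts $h\neq 0$ in case (i) and the determinant condition $\zeta^2-\zeta_0^2>0$ in case (ii). Also note your final fallback --- perturbing $\lambda$ away from $0$ and invoking that $\partial_\lambda\Lambda_{\mu_P}(0,q)\neq 0$ ``is the analog of $f(q)-q$'' --- leans on Step~I/II of Proposition~\ref{prop0}, which rely on $\alpha_{\mu_P}=0$ on $[0,|q|)$ and hence are specific to $|q|\le\eta$; that identity does not hold for $|q|>\eta$, so the strictness fallback also does not go through as stated.
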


The essential idea to prove this proposition is to construct relevant $\mu\in\mathcal{M}$ depending on $q$ and $\mu_P$ such that
\begin{align}
\label{sec4.2:eq1}
\int_0^1\alpha_\mu(s)s\xi''(s)ds+\int_0^{|q|}\alpha_\mu(s)s\xi_0''(\iota s)ds=\int_0^1\alpha_{\mu_P}(s)s\xi''(s)ds
\end{align}
and
\begin{align}
\label{sec4.2:eq2}
\Psi_{\mu}(0,0,h,h)<2\Phi_{\mu_P}(0,h).
\end{align}
Once these are established, it will follow by definition that $\Lambda(q)\leq\Lambda_{\mu}(0,q)<2\mathcal{P}(\mu_P).$ In order to get \eqref{sec4.2:eq1}, one natural choice of $\mu$ is via \eqref{lem6:eq1} below. The major obstacle here comes from the derivation of \eqref{sec4.2:eq2} for such a choice of $\mu$. This will be handled through the variational representation for $\Psi_{\mu}$ and $\Phi_{\mu_P}$. A key lemma we will need along the line is the global uniqueness of the maximizer for $\Phi_{\mu_P}$.

\begin{lemma}\label{lem00}
Let $0\leq s<t\leq 1$. Suppose that $u^*$ is a maximizer of $\Phi_\mu(0,x)=\max_{u\in D[0,t]}F_\mu^{0,t}(u,x)$. If $\alpha_\mu>0$ on $(s,t),$ then $u^*(r)=\partial_x\Phi_\mu(r,X(r))$ for $s\leq r\leq t$, where 
$$
X(r)=x+\int_0^r\alpha_\mu(w) \zeta(w) \partial_x\Phi_\mu(w,X(w))dw+\int_0^r\zeta(w)^{1/2}dB(w),\,\,\forall s\leq r\leq t.
$$
In other words, the maximizer is unique under the assumption $\alpha_\mu>0$ on $(s,t).$ 
\end{lemma}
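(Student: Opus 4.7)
The plan is to reduce to the uniqueness statement in Theorem \ref{thm-2} by working on a shrinking family of subintervals $[s',t]$ with $s'\downarrow s$. Let
\[
X^{u^*}(r):=x+\int_0^r\alpha_\mu(w)\zeta(w)u^*(w)dw+\int_0^r\zeta(w)^{1/2}dB(w),\quad r\in[0,t],
\]
denote the state trajectory driven by $u^*$. I will show that $u^*(r)=\partial_x\Phi_\mu(r,X^{u^*}(r))$ almost surely on $(s,t]$ and that $X^{u^*}$ in fact coincides with the global feedback trajectory $X$ on $[0,t]$; the conclusion of the lemma then follows.

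First, for any $s'\in(s,t)$, since $\alpha_\mu$ is nondecreasing and strictly positive on $(s,t)$, one has $\alpha_\mu(r)\geq\alpha_\mu(s')>0$ for all $r\in[s',t]$, and $\int_{s'}^t\alpha_\mu(r)dr\leq t-s'<1$ because $s'>0$. Theorem \ref{thm-2} therefore applies on $[s',t]$, so the maximizer of $F_\mu^{s',t}(\cdot,y)$ is unique for every $y\in\mathbb{R}$ and is given by the feedback $\partial_x\Phi_\mu(\cdot,Y(\cdot))$ along the SDE $Y$ started at $y$ at time $s'$. Next, a standard dynamic programming argument based on the variational representation \eqref{thm-2:eq1} --- condition on $\mathscr{F}_{s'}$, apply \eqref{thm-2:eq1} pathwise on $[s',t]$ to bound the tail contribution by $\Phi_\mu(s',X^{u^*}(s'))$, and observe that global optimality forces this conditional bound to be attained almost surely --- shows that $u^*|_{[s',t]}$ is almost surely a maximizer of $F_\mu^{s',t}(\cdot,X^{u^*}(s'))$. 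Combined with the uniqueness just stated and strong uniqueness for the feedback SDE (which rests on the Lipschitz property of $\partial_x\Phi_\mu$ in $x$, coming from the boundedness of $\partial_{xx}\Phi_\mu$ recalled in the paper), this forces $u^*(r)=\partial_x\Phi_\mu(r,X^{u^*}(r))$ on $[s',t]$ a.s. Letting $s'\downarrow s$ extends the identity to $(s,t]$.

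Finally, to identify $X^{u^*}$ with $X$, set $r^*:=\inf\{r\in[0,1]:\alpha_\mu(r)>0\}$, so that $r^*\leq s$ by hypothesis. Rerunning the previous argument with $r^*$ in place of $s$ gives $u^*(r)=\partial_x\Phi_\mu(r,X^{u^*}(r))$ a.e.\ on $(r^*,t]$. On $[0,r^*)$ the drift term vanishes in both processes (as $\alpha_\mu\equiv 0$), so $X^{u^*}$ and $X$ agree with $x+\int_0^r\zeta(w)^{1/2}dB(w)$ there, and in particular $X^{u^*}(r^*)=X(r^*)$ by continuity; on $(r^*,t]$ both solve the same feedback SDE with this common initial value, and pathwise uniqueness yields $X^{u^*}\equiv X$ throughout $[0,t]$. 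The desired identity on $[s,t]$ follows. The main obstacle I anticipate is the rigorous formulation of the dynamic programming step: although conceptually standard, care is needed to select on the filtered probability space, and with probability one rather than merely in expectation, the tail of $u^*$ as a maximizer of the subproblem starting from $X^{u^*}(s')$, so that the pathwise uniqueness on $[s',t]$ can be invoked.
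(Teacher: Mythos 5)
Your argument is correct and rests on the same core mechanism as the paper's proof: transfer the global optimality of $u^*$ into conditional optimality on subintervals where the local uniqueness of Theorem \ref{thm-2} can be invoked, and then match the resulting feedback. The structural difference is that the paper works with a finite partition $\{a_i\}_{i=0}^n$ of $[s,t]$ and concatenates over the pieces, while you work on a single interval $[s',t]$ and then let $s'\downarrow s$. Both variants hinge precisely on the step you flag as delicate: upgrading the fact that the tail of $u^*$ achieves the value $\e\Phi_\mu(s',X^{u^*}(s'))$ \emph{in expectation} to the statement that, conditionally on $\mathscr{F}_{s'}$, it is a maximizer \emph{almost surely}. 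The paper handles this cleanly via a chain of inequalities that must all be equalities (so that the pathwise bound $\e[C_\mu^{a_i,a_{i+1}}(u_i',y_i)-L_\mu^{a_i,a_{i+1}}(u_i')\mid y_i]\leq\Phi_\mu(a_i,y_i)$ is forced to be tight a.s.), and the analogous argument would complete your dynamic programming step on $[s',t]$. Two remarks. First, the paper's proof chooses the partition so that $\int_{a_i}^{a_{i+1}}\alpha_\mu(r)\zeta(r)\,dr<1$ (with a factor $\zeta$), whereas you verify only $\int_{s'}^t\alpha_\mu(r)\,dr\leq t-s'<1$; the latter matches the hypothesis of Theorem \ref{thm-2} as stated, but if the operative condition is really the one with the $\zeta$ factor, as the paper's own proof suggests, then a single application on $[s',t]$ would not suffice and you would also need to partition, at which point the two proofs essentially merge. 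Second, your closing paragraph identifying $X^{u^*}$ with the autonomous feedback trajectory $X$ on all of $[0,t]$ — running the argument from $r^*=\inf\{r:\alpha_\mu(r)>0\}$ and using that both processes reduce to pure noise on $[0,r^*)$ — addresses a point that the paper's concatenation argument leaves implicit, and is a useful clarification.
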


\begin{proof}
Let $\{a_i\}_{i=0}^n$ be a regular partition of $[s,t]$ with $\int_{a_{i}}^{a_{i+1}}\alpha_\mu(r)\zeta(r)dr<1$ for $1\leq i<n.$ Define $u_i\in D[a_{i},t]$ by $u_i(w)=u^*(w)$ for $a_{i}\leq w\leq t$ and $v_i\in D[0,a_{i}]$ by $v_i(w)=u^*(w)$ for $0\leq w\leq a_i$. Set
\begin{align*}
&y_i=x+\int_{0}^{a_i}\alpha_\mu(w)\zeta(w)v_i(w)dw+\int_0^{a_i}\zeta(w)^{1/2}dB(w)
\end{align*}
Using conditional expectation,
\begin{align*}
\Phi_\mu(0,x)&=\e\bigl(\e\bigl[C_{\mu}^{0,t}(u^*,x)-L_{\mu}^{0,t}(u^*)\big|y_i\bigr]\bigr)\\
&=\e\bigl(\e\bigl[C_{\mu}^{a_i,t}(u_i,y_i)-L_{\mu}^{a_i,t}(u_i)\big|y_{i}\bigr]\bigr)-\e L_\mu^{0,a_i}(v_i)\\
&\leq \e\Phi_\mu(a_i,y_i)-\e L_\mu^{0,a_i}(v_i)\\
&=F_\mu^{0,a_i}(v_i,x)\\
&\leq \Phi_\mu(0,x),
\end{align*}
which implies that $F_{\mu}^{0,a_i}(v_i,x)$'s are the same for all $0\leq i\leq n.$ Using this, we obtain that
\begin{align*}
\e C_\mu^{0,a_{i}}(v_{i},x)-\e L_\mu^{0,a_{i}}(v_{i})
&=F_{\mu}^{0,a_{i}}(v_{i},x)\\
&=F_{\mu}^{0,a_{i+1}}(v_{{i+1}},x)\\
&=\e C_\mu^{0,a_{i+1}}(v_{i+1},x)-\e L_\mu^{0,a_{i+1}}(v_{i+1}).
\end{align*}
and thus,
\begin{align}\label{eq00}
\e C_\mu^{0,a_{i}}(v_{i},x)
&=\e C_\mu^{0,a_{i+1}}(v_{i+1},x)-\e L_\mu^{a_i,a_{i+1}}(u_{i}')\notag\\
&=\e C_\mu^{a_{i},a_{i+1}}(u_{i}',y_{i})-\e L_\mu^{a_{i},a_{i+1}}(u_i')\notag\\
&=\e \bigl(\e \bigl[C_\mu^{a_{i},a_{i+1}}(u_{i}',y_i)-\e L_\mu^{a_{i},a_{i+1}}(u_i')\big|y_i\bigr]\Bigr)\\
&\leq \e \max_{u'\in D[a_{i},a_{i+1}]}F_\mu^{a_{i},a_{i+1}}(u',y_{i})\notag\\
&=\e \Phi_\mu(a_{i},y_{i}),\notag
\end{align}
where $u_i'\in D[a_{i},a_{i+1}]$ is the restriction of $u^*$ to $[a_i,a_{i+1}].$ Since 
\begin{align*}
\max_{u'\in D[a_{i},a_{i+1}]}F_\mu^{a_{i},a_{i+1}}(u',y)=\Phi_\mu(a_{i},y),\,\,\forall y\in\mathbb{R}
\end{align*}
and
$
\e C_\mu^{0,a_{i}}(v_i,x)=\e\Phi_{\mu}(a_{i},y_i),
$
these and \eqref{eq00} force that conditioning on $y_i,$ $u_i'$ is the maximizer to the variational problem $\max_{u'\in D[a_{i},a_{i+1}]}F_\mu^{a_{i},a_{i+1}}(u',y_{i})$. Therefore, applying the local uniqueness of the maximizer for $(s,t)=(a_i,a_{i+1})$ in Theorem \ref{thm-2} leads to $u_i'(r)=\partial_x\Phi_\mu(r,X_i(r))$ on $[a_{i},a_{i+1}],$ where $X_i=(X_i(w))_{a_{i}\leq w\leq{a_{i+1}}}$ is the solution to 
\begin{align*}
X_i(r)&=y_{i}+\int_{a_{i}}^{a_{i+1}}\alpha_\mu(w)\zeta(w) \partial_x\Phi_\mu(w,X_i(w))dw+\int_{a_{i}}^{a_{i+1}}\zeta(w)^{1/2}dB(w).
\end{align*}
Concatenating all these from $i=0$ to $n-1$  together gives the announced result.
\end{proof}

The proposition below is the core ingredient of the matter that gives a quantitative error estimate between the one and two dimensional Parisi PDEs for a specific choice of $\mu.$ 

\begin{proposition}\label{lem6}
Assume that $|q|>\eta$ and
\begin{align}\label{lem6:eq0}
\frac{\zeta(s)}{\zeta(s)+\zeta_0(s)}
\end{align}
is nondecreasing on $(0,1]$. Define $\mu\in\mathcal{M}$ by
\begin{align}\label{lem6:eq1}
\alpha_\mu(s)&=\left\{
\begin{array}{ll}
\frac{\alpha_{\mu_P}(s)\zeta(s)}{\zeta(s)+\zeta_0(s)},&\mbox{if $s\in [0,|q|)$},\\
\\
\alpha_{\mu_P}(s),&\mbox{if $s\in[|q|,1]$}.
\end{array}
\right.
\end{align}
\begin{enumerate}
\item[$(i)$] We have that
\begin{align}
\begin{split}
\label{eq4}
\Psi_\mu(0,0,\vx)&\leq \Phi_{\mu_P}(0,x_1)+\Phi_{\mu_P}(0,x_2)
-\frac{1}{2}\int_{0}^{|q|}\frac{\alpha_{\mu_P}\zeta\zeta_0(\zeta-\zeta_0)}{(\zeta+\zeta_0)^2}\e\left(v_1-\iota v_2\right)^2dw,
\end{split}
\end{align}
where $v_\mu=(v_1,v_2)$ is the maximizer to the variational problem \eqref{thm2} for $\Psi_\mu(0,0,x_1,x_2)$ using $(s,t)=(0,|q|).$ 

\item[$(ii)$] Define 
\begin{align}
\begin{split}\label{eq01}
\left(u_1(r),u_2(r)\right)&=\frac{1}{\zeta(r)+\zeta_0(r)}T(r)v_\mu(r),
\end{split}\\
\begin{split}
\label{eq001}
(B_1(r),{B}_2(r))&=\frac{1}{\zeta(r)^{1/2}}T(r)^{1/2}\mathcal{B}(r)
\end{split}
\end{align}
for $0\leq r\leq |q|.$ If $$
\Psi_\mu(0,0,\vx)=\Phi_{\mu_P}(0,x_1)+\Phi_{\mu_P}(0,x_2),$$
then $u_1$ and $u_2$ are the maximizers for the variational problem \eqref{thm-2:eq1} of $\Phi_{\mu_P}(0,x_1)$ and $\Phi_{\mu_P}(0,x_2)$ using $(s,t)=(0,|q|)$ with respect to the standard Brownian motions $B_1$ and $B_2$, respectively. Moreover, on the interval $[\eta,|q|]$, they are equal to 
\begin{align*}
u_1(r)&=\partial_x\Phi_{\mu_P}(r,X_1(r)),\\
u_2(r)&=\partial_x\Phi_{\mu_P}(r,X_2(r)),
\end{align*}
where $(X_1(r))_{0\leq r\leq|q|}$ and $(X_2(r))_{0\leq r\leq |q|}$ satisfy
\begin{align*}
X_1(r)&=x_1+\int_{0}^r\alpha_{\mu_P}(w)\zeta(w)\partial_x\Phi_{\mu_P}(w,X_1(w))dw+\int_{0}^r\zeta(w)^{1/2}dB_1(w),\\
X_2(r)&=x_2+\int_{0}^r\alpha_{\mu_P}(w)\zeta(w)\partial_x\Phi_{\mu_P}(w,X_2(w))dw+\int_{0}^r\zeta(w)^{1/2}dB_2(w).
\end{align*}
\end{enumerate}
\end{proposition}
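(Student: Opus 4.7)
The plan is to establish (i) by reducing $\Psi_\mu$ at time $|q|$ to two one-dimensional problems via Lemma \ref{lem2}, and then using the linear change of variables \eqref{eq01}--\eqref{eq001} to recast the two-dimensional variational representation on $[0,|q|]$ as two one-dimensional variational problems for $\Phi_{\mu_P}$, up to an explicit quadratic error term. Part (ii) will then follow by identifying the equality case and invoking the global uniqueness of Lemma \ref{lem00}. Note that the nondecreasing assumption on \eqref{lem6:eq0} is used precisely to guarantee that $\alpha_\mu$ in \eqref{lem6:eq1} is nondecreasing, so that $\mu\in\mathcal{M}$.

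Since $\mu=\mu_P$ on $[|q|,1]$, Lemma \ref{lem2} gives $\Psi_\mu(0,|q|,y_1,y_2)=\Phi_{\mu_P}(|q|,y_1)+\Phi_{\mu_P}(|q|,y_2)$. Applying Theorem \ref{thm2} with $(s,t)=(0,|q|)$ and the maximizer $v_\mu=(v_1,v_2)$ yields
\begin{align*}
\Psi_\mu(0,0,\vx)&=\e\bigl[\Phi_{\mu_P}(|q|,X_1(|q|))+\Phi_{\mu_P}(|q|,X_2(|q|))\bigr]\\
&\quad-\tfrac{1}{2}\e\int_0^{|q|}\alpha_\mu(w)\langle T(w)v_\mu(w),v_\mu(w)\rangle\,dw,
\end{align*}
where $\vX=(X_1,X_2)$ satisfies \eqref{thm2:eq1}. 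From \eqref{lem6:eq1} one has $\alpha_\mu Tv_\mu=\alpha_{\mu_P}\zeta(u_1,u_2)$, while the quadratic covariation $d\langle B_j,B_k\rangle=\zeta^{-1}T_{jk}\,dt$ shows that $B_1,B_2$ are standard Brownian motions with respect to the filtration $(\mathscr{G}_r)$ (correlated with each other, though each is individually standard). Since $|v_j|\leq 1$ and $\zeta,\zeta_0\geq 0$, one obtains $|u_j|\leq 1$, hence $u_j\in D[0,|q|]$. Rewriting \eqref{thm2:eq1} componentwise in the new coordinates gives
\begin{align*}
X_j(r)=x_j+\int_0^r\alpha_{\mu_P}(w)\zeta(w)u_j(w)\,dw+\int_0^r\zeta(w)^{1/2}\,dB_j(w),
\end{align*}
so Theorem \ref{thm-2} applied to $\Phi_{\mu_P}$ on $(s,t)=(0,|q|)$ driven by $B_j$ with control $u_j$ yields
\begin{align*}
\Phi_{\mu_P}(0,x_j)\geq F_{\mu_P}^{0,|q|}(u_j,x_j)=\e\Phi_{\mu_P}(|q|,X_j(|q|))-\tfrac{1}{2}\e\int_0^{|q|}\alpha_{\mu_P}(w)\zeta(w)u_j(w)^2\,dw.
\end{align*}

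Summing the last inequality over $j=1,2$ and subtracting from the formula for $\Psi_\mu(0,0,\vx)$ reduces \eqref{eq4} to verifying the algebraic identity
\begin{align*}
\alpha_{\mu_P}\zeta(u_1^2+u_2^2)-\alpha_\mu\langle Tv_\mu,v_\mu\rangle=-\frac{\alpha_{\mu_P}\zeta\zeta_0(\zeta-\zeta_0)}{(\zeta+\zeta_0)^2}(v_1-\iota v_2)^2,
\end{align*}
which follows by a direct expansion using $\iota^2=1$ together with the explicit forms of $T$ and $u_j$. For (ii), the hypothesis $\Psi_\mu(0,0,\vx)=\Phi_{\mu_P}(0,x_1)+\Phi_{\mu_P}(0,x_2)$ forces each inequality $\Phi_{\mu_P}(0,x_j)\geq F_{\mu_P}^{0,|q|}(u_j,x_j)$ to be an equality, so each $u_j$ is a maximizer of the one-dimensional problem $\max_{u\in D[0,|q|]}F_{\mu_P}^{0,|q|}(u,x_j)$ driven by $B_j$. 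Since $\eta=\min\mbox{supp}\,\mu_P$, one has $\alpha_{\mu_P}>0$ on $(\eta,|q|)$, and Lemma \ref{lem00} then gives $u_j(r)=\partial_x\Phi_{\mu_P}(r,X_j(r))$ for $r\in[\eta,|q|]$.

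The main obstacle will be carefully justifying the non-orthogonal change of variables \eqref{eq01}--\eqref{eq001}: verifying that the transformed drivers $B_1,B_2$ are genuine standard Brownian motions with respect to $(\mathscr{G}_r)$ so that the one-dimensional variational inequality applies coordinatewise, and then extracting the clean quadratic form $(v_1-\iota v_2)^2$ from the otherwise opaque algebra on the difference of quadratic control costs.
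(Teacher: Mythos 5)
Your proof is correct and follows essentially the same route as the paper: reduce $\Psi_\mu$ at time $|q|$ to a sum of one-dimensional solutions via Lemma \ref{lem2}, pass from $(v_\mu,\mathcal{B})$ to $(u_j,B_j)$ through the linear change of variables \eqref{eq01}--\eqref{eq001}, apply the one-dimensional variational inequality for $\Phi_{\mu_P}$ to each coordinate, and identify the difference of the quadratic control costs as the quoted $(v_1-\iota v_2)^2$ correction. The only cosmetic difference is that you work through the SDE and the $\e Y$-type decomposition, whereas the paper phrases the same computation in terms of the functionals $\mathcal{C}_\mu$, $\mathcal{L}_\mu$, $\mathcal{F}_\mu$; the underlying algebra and the appeal to Lemma \ref{lem00} for the equality case are identical.
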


\begin{proof}
Note that the well-definedness of $\mu$ is guaranteed by \eqref{lem6:eq0}. Let $v_\mu=(v_1,v_2)$ be the maximizer to the variational problem \eqref{thm2} for $\Psi_{\mu}$ with $(s,t)=(0,|q|).$ Set $(u_1,u_2)$ via \eqref{eq01}. Here $u_1,u_2$ are progressively measurable processes with respect to the filtration $(\mathscr{G}_r)_{r\geq 0}$ and ${B}_1,{B}_2$ are (correlated) standard Brownian motions. Denote by $$
\left(C_{\mu_P,1}^{0,|q|},L_{\mu_P,1}^{0,|q|},F_{\mu_P,1}^{0,|q|}\right)\,\,\mbox{and}\,\,\left(C_{\mu_P,2}^{0,|q|},L_{\mu_P,2}^{0,|q|},F_{\mu_P,2}^{0,|q|}\right)$$
the functionals defined in the same away as \eqref{eq2} by using ${B}_1$ and ${B}_2$, respectively. Observe that from \eqref{eq-14} and the definition of $u_1,u_2,$
\begin{align*}
\mathcal{C}_\mu^{0,|q|}(0,v_\mu,\vx)&=\Phi_{\mu_P}\Bigl(|q|,x_1+\int_{0}^{|q|}\alpha_{\mu_P}(w)\zeta(w)u_1(w)dw+\int_{0}^{|q|}\zeta(w)^{1/2}d{B}_1(w)\Bigr)\\
&+\Phi_{\mu_P}\Bigl(|q|,x_2+\int_{0}^{{|q|}}\alpha_{\mu_P}(w)\zeta(w)u_2(w)dw+\int_{0}^{|q|}\zeta(w)^{1/2}d{B}_2(w)\Bigr)\\
&=C_{\mu_P,1}^{0,|q|}(u_1,x_1)+C_{\mu_P,2}^{0,|q|}(u_2,x_2).
\end{align*}
In addition, noting that a direct computation gives
\begin{align*}
\left(u_1(r),u_2(r)\right)&=\left(\frac{\zeta(r)v_1(r)+\iota\zeta_0(r)v_2(r)}{\zeta(r)+\zeta_0(r)},\frac{\iota\zeta_0(r)v_1(r)+\zeta(r)v_2(r)}{\zeta(r)+\zeta_0(r)}\right),
\end{align*}
it follows
\begin{align*}
&\frac{1}{\zeta+\zeta_0}\left<Tv_\mu,v_\mu\right>-u_1^2-u_2^2\\
&=\left(\frac{\zeta}{\zeta+\zeta_0}-\frac{\zeta^2+\zeta_0^2}{(\zeta+\zeta_0)^2}\right)(v_1^2+v_2^2)+2\iota \zeta_0\left(\frac{1}{\zeta+\zeta_0}-\frac{2\zeta}{(\zeta+\zeta_0)^2}\right)v_1v_2\\
&=\frac{\zeta_0(\zeta-\zeta_0)}{(\zeta+\zeta_0)^2}\left(v_1-\iota v_2\right)^2,
\end{align*}
which implies
\begin{align*}
\mathcal{L}_{\mu}^{0,|q|}(v_\mu)&=L_{\mu_P,1}^{0,|q|}(u_1)+L_{\mu_P,2}^{0,|q|}(u_2)
+\frac{1}{2}\int_{0}^{|q|}\frac{\alpha_{\mu_P}\zeta\zeta_0(\zeta-\zeta_0)}{(\zeta+\zeta_0)^2}\e\left(v_1-\iota v_2\right)^2dw.
\end{align*}
Combining these together, the variational representations for $\Psi_{\mu}(0,0,\vx)$ and $\Phi_{\mu_P}(0,h)$ yield \eqref{eq4} since
\begin{align*}
\Psi_\mu(0,0,\vx)
&=\mathcal{F}_\mu^{0,|q|}(0,v_\mu,\vx)\\
&= F_{\mu_P,1}^{0,|q|}(u_{1},x_1)+F_{\mu_P,2}^{0,|q|}(u_2,x_2)-\frac{1}{2}\int_{0}^{|q|}\frac{\alpha_{\mu_P}\zeta\zeta_0(\zeta-\zeta_0)}{(\zeta+\zeta_0)^2}\e\left(v_1-\iota v_2\right)^2dw\\
&\leq \Phi_{\mu_P}(0,x_1)+\Phi_{\mu_P}(0,x_2)-\frac{1}{2}\int_{0}^{|q|}\frac{\alpha_{\mu_P}\zeta\zeta_0(\zeta-\zeta_0)}{(\zeta+\zeta_0)^2}\e\left(v_1-\iota v_2\right)^2dw.
\end{align*}
If $\Psi_\mu(0,0,\vx)=\Phi_{\mu_P}(0,x_1)+\Phi_{\mu_P}(0,x_2)$, this inequality implies that $u_1$ and $u_2$ are the maximizers of the variational representations, $$
\Phi_\mu(0,x_1)=\max_{u\in D[0,|q|]}F_{\mu_P,1}^{0,|q|}(u,x_1)\,\,\mbox{and}\,\,\Phi_\mu(0,x_2)=\max_{u\in D[0,|q|]}F_{\mu_P,2}^{0,|q|}(u,x_2)
$$ corresponding to the Brownian motions $B_1$ and $B_2$ respectively. Since $\alpha_\mu>0$ on $(\eta,|q|],$ Lemma \ref{lem00} concludes $(ii).$
\end{proof}

\begin{proof}[\bf Proof of Proposition \ref{prop2}] First note that the measure $\mu$ in \eqref{lem6:eq1} is well-defined since the function $\zeta/(\zeta+\zeta_0)$ under both assumptions $(i)$ and $(ii)$ is nondecreasing on $(0,1].$ We plug this $\mu$ into \eqref{eq-11} and let $\lambda=0$ to obtain $$
\Lambda(q)\leq 2\log 2+\Psi_\mu(0,0,h,h)-\int_0^1\alpha_{\mu_P}(s)s\xi''(s)ds.
$$
Thus, to finish the proof, we only need to verify that $\Psi_\mu(0,0,h,h)<2\Phi_{\mu_P}(0,h).$ Suppose the equality holds. Proposition \ref{lem6}$(ii)$ implies
that for any $\eta\leq r\leq |q|,$
\begin{align*}
u_1(r)&=\partial_x\Phi_{\mu_P}(r,X_1(r)),\\
u_2(r)&=\partial_x\Phi_{\mu_P}(r,X_2(r)),
\end{align*}
where $(X_1(r))_{0\leq r\leq|q|}$ and $(X_2(r))_{0\leq r\leq |q|}$ satisfy
\begin{align}
\begin{split}\label{eq5}
X_1(r)&=h+\int_{0}^r\alpha_{\mu_P}(w)\zeta(w)\partial_x\Phi_{\mu_P}(w,X_1(w))dw+\int_{0}^r\zeta(w)^{1/2}dB_1(w),\\
X_2(r)&=h+\int_{0}^r\alpha_{\mu_P}(w)\zeta(w)\partial_x\Phi_{\mu_P}(w,X_2(w))dw+\int_{0}^r\zeta(w)^{1/2}dB_2(w).
\end{split}
\end{align}
Our proof will clearly be completed by the following two cases.
\smallskip

\noindent{\bf Case I:} $-1\leq q<-\eta$, $\xi=\xi_0$ is even and $h\neq 0.$ Since $\iota=-1$, these assumptions combined with \eqref{eq01} and \eqref{eq001} lead to $u_1=-u_2$ and $B_1=-B_2$. Consequently, adding the two equations in \eqref{eq5} together implies $X_1(r)+X_2(r)=2h$ for $q\leq r\leq -\eta.$ On the other hand, since $\partial_x\Phi_{\mu_P}(r,\cdot)$ is odd and strictly increasing from Lemma \ref{sec2:lem2}, the equation $$
\partial_x\Phi_{\mu_P}(r,X_1(r))=u_1(r)=-u_2(r)=\partial_x\Phi_{\mu_P}(r,-X_2(r))
$$
deduces $X_1(r)=-X_2(r)$, which contradicts $X_1(r)+X_2(r)=2h$ since $h\neq 0.$ 

\smallskip

\noindent{\bf Case II:} $\zeta_0<\zeta$ on $(0,1]$. Since $\zeta>\zeta_0\geq 0$ and $\zeta_0=0$ for at most a finite number of points, we deduce from \eqref{eq4} and the continuity of $v_1,v_2$ that $v_1=\iota v_2$ on $[\eta,|q|]$. From \eqref{eq01}, it then follows that $u_1=\iota u_2$ on $[\eta,|q|].$ Again, using the facts that $\partial_x\Phi_{\mu_P}(r,\cdot)$ is odd and strictly increasing, we conclude $X_1=\iota X_2$ on $[\eta,|q|]$ and from \eqref{eq5}, for $r\in [\eta,|q|],$
\begin{align*}
0&=X_1(r)-\iota X_2(r)=(1-\iota)h+\int_0^r\zeta(w)^{1/2}d\bigl(B_1(w)-\iota B_2(w)\bigr).
\end{align*}
This forces that $B_1=\iota B_2$ and therefore, \eqref{eq001} implies that $\zeta(r)^2-\zeta_0(r)^2=\det T(r)=0$ for $r\in [\eta,|q|]$. This leads to a contradiction since $\zeta>\zeta_0\geq 0.$
\end{proof}

\subsection{Proof of Theorems \ref{pp}, \ref{npp} and \ref{dc}}

Before we start, it is crucial to notice that $\Psi_{\mu}(\lambda,0,h,h)$ is a continuous function in $q\in[-1,1]$ for any $\mu\in\mathcal{M}$ and $\lambda\in\mathbb{R}$, which can be easily shown by following a similar argument as in the proof of Theorem \ref{sec1.2:thm1}. Thus, $\Lambda$ is upper semicontinuous on $[-1,1].$

\begin{proof}[\bf Proof of Theorem \ref{pp}]
Note that $H_N^1=H_N=H_N^2$ since $\xi=\xi_0.$ Let $\varepsilon>0.$ From the upper semicontinuity of $\Lambda$ on $[-1,\eta-\varepsilon]$, we denote by $q'$ the maximizer of $$
\max_{q\in[-1,\eta-\varepsilon]}\Lambda(q).
$$
If the assumption $(i)$ holds, then Proposition \ref{prop0} and the first assertion of Proposition \ref{prop2} together implies $\Lambda(q)<2\mathcal{P}(\mu_P)$ for $q\in[-1,\eta-\varepsilon]$ and thus, 
\begin{align}
\label{proof:thmpp:eq1}
\Lambda(q)\leq \Lambda(q')<2\mathcal{P}(\mu_P),\,\,\forall q\in[-1,\eta-\varepsilon].
\end{align} 
Now suppose that the condition $(ii)$ is true. Then the series $\xi$ must contain some term $\beta_{p}^2s^p$ with $\beta_p\neq 0$ for some odd $p$. This implies that for any $q\in[-1,-\eta),$ 
\begin{align}
\label{eq}
\zeta_0(s)=\xi''(-s)<\xi''(s)=\zeta(s),\,\,\forall s\in (0,1],
\end{align}
which combined with \eqref{pp:eq2} yields $\Lambda(q)<2\mathcal{P}(\mu_P)$ for $q\in [-1,-\eta)$ by the second assertion of Proposition \ref{prop2}. Since $h\neq 0$, we can use Proposition \ref{prop0} to obtain $\Lambda(q)<2\mathcal{P}(\mu_P)$ for $q\in [-\eta,\eta-\varepsilon]$ and consequently \eqref{proof:thmpp:eq1} is valid. In summary, the two assumptions $(i)$ and $(ii)$ lead to
\begin{align*}
\limsup_{N\rightarrow\infty}\max_{q\in S_N\cap[-1,\eta-\varepsilon]}\frac{1}{N}\e\log\sum_{R_{1,2}=q}\exp\bigl(H_N(\vsi^1)+H_N(\vsi^2)\bigr)<2\mathcal{P}(\mu_P).
\end{align*}
Finally, from this inequality, \eqref{eq-5} can be obtained by using the Gaussian concentration of measure and the Parisi formula. Since this part of the argument is very standard and has appeared in several places, e.g. \cite[Section 14.12]{Tal11}, we omit the details.
\end{proof}

\begin{proof}[\bf Proof of Theorem \ref{npp}]
Again $H_N^1=H_N=H_N^2$. Note that $\eta=0$ since $h=0.$ Recall the maximizer $q'$ from the proof of Theorem \ref{pp}. From the given assumption of Theorem \ref{npp}, one sees that \eqref{eq} is also valid. Thus, the second assertion of Proposition  \ref{prop2} implies $\Lambda(q)<2\mathcal{P}(\mu_p)$ for all $q\in[-1,-\varepsilon]$ and as a result,
$$
\Lambda(q)\leq \Lambda(q')<2\mathcal{P}(\mu_p),\,\,\forall q\in[-1,-\varepsilon],
$$ 
from which it follows that
\begin{align*}
\limsup_{N\rightarrow\infty}\max_{q\in S_N\cap[-1,-\varepsilon]}\frac{1}{N}\e\log\sum_{R_{1,2}=q}\exp\bigl(H_N(\vsi^1)+H_N(\vsi^2)\bigr)<2\mathcal{P}(\mu_P).
\end{align*}
The rest of the proof can be completed by an identical argument as the last part of the proof of Theorem \ref{pp}.
\end{proof}

\begin{proof}[\bf Proof of Theorem \ref{dc}] Note that $\xi\neq \xi_0.$ Let $q^*\in (0,\eta)$ be the constant stated in Proposition \ref{prop0} if $h\neq 0$ and set $q^*=0$ if $h=0.$
From the upper semicontinuity of $\Lambda,$ for $\varepsilon>0,$ let $q''$ be the maximizer of 
$$
\max_{q\in[-1,1]:|q-q^*|\geq \varepsilon}\Lambda(q).
$$ 
Note that from the assumptions \eqref{dc:eq2} and \eqref{dc:eq3}, 
\begin{align}
\label{proof:dc:eq1}
\Lambda(q)<2\mathcal{P}(\mu_P),\,\,\forall q\in[-1,-\eta)\cup(\eta,1]
\end{align}
by the second statement of Proposition \ref{prop2}.
If $h=0$, then $\eta=q^*=0$ and this inequality implies
\begin{align}\label{proof:dc:eq2}
\Lambda(q)\leq \Lambda(q'')<2\mathcal{P}(\mu_P),\,\,\forall q\in[-1,1]\,\,\mbox{with}\,\, |q-q^*|\geq\varepsilon.
\end{align}
If $h\neq 0,$ then Proposition \ref{prop0} gives $\Lambda(q)<2\mathcal{P}(\mu_P)$ for $q\in[-\eta,\eta]\setminus\{q^*\}.$ This together with \eqref{proof:dc:eq1} concludes \eqref{proof:dc:eq2}
by the second assertion of Proposition \ref{prop2}. Therefore, we have shown that
\begin{align*}
\limsup_{N\rightarrow\infty}\max_{q\in S_N:|q-q^*|\geq\varepsilon}\frac{1}{N}\e\log\sum_{R_{1,2}=q}\exp\bigl(H_N^1(\vsi^1)+H_N^2(\vsi^2)\bigr)<2\mathcal{P}(\mu_P).
\end{align*}
Using this inequality, \eqref{chaos} follows by applying the Gaussian concentration of measure and the Parisi formula. Once again, we skip this part of the argument as it can be found in great detail in the proof of \cite[Theorem 7]{Chen14}.
\end{proof}

\begin{center}
{\bf \large Appendix}
\end{center}

\begin{proof}[\bf Proof of Lemma \ref{lem3}] 
We argue by applying the Gaussian integration by parts formula.
Define
\begin{align*}
c_1(s)&=\iota(\xi_{1,2}'(\iota\rho_{1,2}(b))-\xi_{1,2}'(\iota\rho_{1,2}(s)))=\int_s^b\rho_{1,2}'(l)\xi_{1,2}''(\iota\rho_{1,2}(l))dl\geq 0,\\
c_2(s)&=\iota(\xi_{2,1}'(\iota\rho_{2,1}(b))-\xi_{2,1}'(\iota\rho_{2,1}(s)))=\int_s^b\rho_{2,1}'(l)\xi_{2,1}''(\iota\rho_{2,1}(l))dl\geq 0,\\
d_1(s)&=\xi_{1,1}'(\rho_{1,1}(b))-\xi_{1,1}'(\rho_{1,1}(s))-c_1(s),\\
d_2(s)&=\xi_{2,2}'(\rho_{2,2}(b))-\xi_{2,2}'(\rho_{2,2}(s))-c_2(s).
\end{align*}
We parametrize $(y_1(s),y_2(s))$ as 
$$
(y_1(s),y_2(s))=\left(\iota\sqrt{c_1(s)}z_0+\sqrt{d_1(s)}z_1,\sqrt{c_2(s)}z_0+\sqrt{d_2(s)}z_2\right),
$$ 
where $z_0,z_1,z_2$ are i.i.d. standard Gaussian. 
Note $c_1=c_2$ by the symmetry of $T$. Recall $\zeta_{\ell,\ell'}$ from \eqref{eq-10}. Observe that
\begin{align*}
\e y_1'(s)y_1(s)&=-\frac{\zeta_{1,1}(s)}{2},\\
\e y_2'(s)y_2(s)&=-\frac{\zeta_{2,2}(s)}{2},\\
\e y_1'(s)y_2(s)&=-\frac{\zeta_{1,2}(s)}{2}=-\frac{\zeta_{2,1}(s)}{2}=\e y_2'(s)y_1(s),
\end{align*} 
where $y_1'$ and $y_2'$ are the derivatives of $y_1$ and $y_2$ with respect to $s$ respectively.
From the growth condition of $A$, it allows us to apply the Gaussian integration by parts to obtain
\begin{align*}
\partial_sL
&=\frac{1}{\e e^{mA}}\e\left[ y_1'\partial_{x_1}A+y_2'\partial_{x_2}A\right]e^{mA}\\
&=\frac{1}{\e e^{mA}}\Bigl(\e (y_1'y_1)\e (\partial_{x_1x_1}A+m(\partial_{x_1}A)^2)e^{mA}+\e (y_1'y_2)\e (\partial_{x_1x_2}A+m\partial_{x_1}A\partial_{x_2}A)e^{mA}\Bigr)\\
&\quad+\frac{1}{\e e^{mA}}\Bigl(\e (y_2'y_2)\e (\partial_{x_2x_2}A+m(\partial_{x_2}A)^2)e^{mA}+\e (y_2'y_1)\e (\partial_{x_2x_1}A+m\partial_{x_1}A\partial_{x_2}A)e^{mA}\Bigr)\\
&=-\frac{1}{2\e e^{mA}}\e\left[\zeta_{1,1}\left(\partial_{x_1x_1}A+m(\partial_{x_1}A)^2\right)+\zeta_{1,2}\left(\partial_{x_1x_2}A+ m(\partial_{x_1}A)(\partial_{x_2}A)\right)\right]e^{mA}\\
&\quad-\frac{1}{2\e e^{mA}}\e\left[\zeta_{2,2}\left(\partial_{x_2x_2}A+m(\partial_{x_2}A)^2\right)+\zeta_{2,1}\left(\partial_{x_1x_2}A+ m(\partial_{x_1}A)(\partial_{x_2}A)\right)\right]e^{mA}\\
&=-\frac{1}{2\e e^{mA}}\e\left[\left<T ,\triangledown^2A\right>+m\left<T\triangledown A,\triangledown A\right>\right]e^{mA}.
\end{align*}
On the other hand, a direct computation gives
\begin{align}
\begin{split}\label{app:eq1}
\partial_{x_1}L=\frac{\e \partial_{x_1}Ae^{mA}}{\e e^{mA}},\\
\partial_{x_2}L=\frac{\e \partial_{x_2}Ae^{mA}}{\e e^{mA} }
\end{split}
\end{align}
and
\begin{align*}
\partial_{x_1x_1}L&=\frac{\e(\partial_{x_1x_1}A+m(\partial_{x_1}A)^2)e^{mA}}{\e e^{mA}}-m\left(\frac{\e \partial_{x_1}A e^{mA}}{\e e^{mA}}\right)^2,\\
\partial_{x_2x_2}L&=\frac{\e(\partial_{x_2x_2}A+m(\partial_{x_2}A)^2)e^{mA}}{\e e^{mA}}-m\left(\frac{\e \partial_{x_2}e^{mA}}{\e e^{mA}}\right)^2,\\
\partial_{x_1x_2}L&=\partial_{x_2x_1}L=\frac{\e(\partial_{x_1x_2}A+m(\partial_{x_1}A)(\partial_{x_2}A))e^{mA}}{\e e^{mA}}
-m\left(\frac{\e \partial_{x_1} e^{mA}}{\e e^{mA}}\right)\left(\frac{\e \partial_{x_2}A\exp mA}{\e e^{mA}}\right).
\end{align*}
Using these, one may easily check that 
\begin{align*}
&\left<T,\triangledown^2L\right>+m\left<T\triangledown L,\triangledown L\right>\\
&=-\frac{1}{e^{mA}}\e\left[\left<T,\triangledown^2A\right>+m\left<T \triangledown A,\triangledown A\right>\right]e^{mA}\\
&=-2\partial_sL,
\end{align*}
which gives \eqref{lem3:eq1}. If $\partial_{x_i}A$ is uniformly bounded by $1$, then \eqref{app:eq1} clearly yields $|\partial_{x_i}L|\leq 1.$
\end{proof}

\thebibliography{99}


\bibitem{AC13}
Auffinger, A., Chen, W.-K. (2013) On properties of Parisi measures. To appear in {\it Probab. Theory Related Fields}.

\bibitem{AC14}
Auffinger, A., Chen, W.-K. (2014) The Parisi formula has a unique minimizer. To appear in {\it Comm. Math. Phys}.



\bibitem{BK}
Bovier, A., Klimovsky, A. (2009) The Aizenman-Sims-Starr and Guerra's schemes for the SK model with multidimensional spins. {\it Electron. J. Probab.}, {\bf 14}, 161--241.

\bibitem{BM87}
Bray, A.J., Morre, M.A. (1987) Chaotic nature of the spin -glass phase. {\it Phys. Rev. Lett.}, {\bf 58}(1), 57--60.

\bibitem{Chatt09}
Chatterjee, S. (2009) Disorder chaos and multiple valleys in spin glasses. arXiv: 0907.3381.

\bibitem{Chen13}
Chen, W.-K. (2013) Disorder chaos in the Sherrington-Kirkpatrick model with external field. {\it Ann. Probab.}, $\bf 41$, no. 5, 3345--3391.

\bibitem{Chen14}
Chen, W.-K. (2014) Chaos in the mixed even-spin models. {\it Comm. Math. Phys.}, {\bf 328}, 867--901.

\bibitem{FH86}
{Fisher, D.S., Huse, D.A.} (1986) Ordered phase of short-range Ising spin glasses. {\it Phys. Rev. Lett.}, {\bf 56}(15), 1601-1604.


\bibitem{G03} Guerra, F. (2003) Broken replica symmetry bounds in the mean field spin glass model. {\it Comm. Math. Phys.}, {\bf 233}, no. 1, 1--12.


\bibitem{JT}
Jagannath, A., Tobasco, I. (2015) A dynamic programming approach to the Parisi functional. arXiv:1502.04398.

\bibitem{JT2}
Jagannath, A., Tobasco, I. (2015) Some properties of the Phase Diagram for Mixed p-Spin Glasses. arXiv:1504.02731.

\bibitem{KS} 
Karatzas, I., Shreve, S. (1991) Brownian motion and stochastic calculus. Second edition. Graduate Texts in Mathematics, {\bf 113}, Springer-Verlag, New York.

\bibitem{KB05}
Kr\c{z}aka\l{}a, F., Bouchaud, J.-P. (2005) Disorder chaos in spin glasses. {\it Europhys. Lett.}, {\bf 72(3)}, 472--478.

\bibitem{P79}
Parisi, G. (1979) Infinite number of order parameters for spin-glasses. {\it Phys. Rev. Lett.}, {\bf 43}, 1754--1756.


\bibitem{Pan08} Panchenko, D. (2008) On differentiability of the Parisi formula. {\it Elect. Comm. in Probab.}, $\mathbf{13}$, 241--247. 

\bibitem{Pan14}
Panchenko, D. (2014) The Parisi formula for mixed $p$-spin models. {\it Ann. Probab.}, $\mathbf{42}$, no. 3, 946--958.

\bibitem{R2009}
Rizzo, T. (2009) Chaos in mean-field spin-glass models. In: de Monvel, A.B., Bovier, A. (eds.) Spin Glasses: Statics and Dynamics, Progress in Probability, {\bf 62}, 143--157.

\bibitem{Tal06}
Talagrand, M. (2006) The Parisi formula. {\it Ann. of Math. $(2)$}, $\mathbf{163},$ no. 1, 221--263.


\bibitem{Tal10}
Talagrand, M. (2011) {Mean field models for spin glasses. Volume I: Basic Examples.} Ergebnisse der Mathematik und ihrer Grenzgebiete. 3. Folge. A Series of Modern Surveys in Mathematics, {\bf 54}, Springer-Verlag, Berlin.

\bibitem{Tal11}
Talagrand, M. (2012) {Mean field models for spin glasses. Volume II: Advanced Replica-Symmetry and Low Temperature.} Ergebnisse der Mathematik und ihrer Grenzgebiete. 3. Folge. A Series of Modern Surveys in Mathematics, {\bf 55}, Springer-Verlag, Berlin.

\bibitem{Ton02}
Toninelli, F. (2002) {About the Almeida-Thouless transition line in the Sherrington-Kirkpatrick mean field spin glass model}. {\it Europhysics Letters}, {\bf 60}, no. 5, 764--767.

\end{document}